\def\c{c}
\def\u{{\mathbf u}}
\def\v{{\mathbf v}}
\def\x{{\mathbf x}}
\def\y{{\mathbf y}}
\def\z{{\mathbf z}}
\def\grad{\nabla}
\def\div{div\,}
\def\RR{\mathbb R}
\def\I{V_0}
\def\J{d_0}
\def\K{B_0}
\def\ThU{\sigma_+}
\def\ThL{\sigma_-}
\def\ThUL{\sigma_\pm}
\def\Vx{S}
\def\Vu{V}
\newcommand{\Lip}[1]{\|#1\|_{\dot{W}^{1,\infty}}}
\newtheorem{thm}{Theorem}[section]
\newtheorem{lem}[thm]{Lemma}
\newtheorem{cor}[thm]{Corollary}
\newtheorem{prop}[thm]{Proposition}
\theoremstyle{definition}
\newtheorem{defn}{Definition}[section]
\newtheorem{rem}{Remark}[section]
\numberwithin{equation}{section}
\numberwithin{thm}{section}
\numberwithin{rem}{section}
\numberwithin{defn}{section}
\begin{document}

%%%% Article title to be placed here
\title[Critical thresholds in flocking hydrodynamics with
  nonlocal alignment]{Critical thresholds in flocking hydrodynamics\\with
  nonlocal alignment}

\author[Eitan Tadmor and Changhui Tan]{%%%% Author details
Eitan Tadmor$^{1,2,3}$ and Changhui Tan$^{1,2}$}

%%%%%%%%% Insert author address here
\address{\mbox{}\newline
$^{1}$Center for Scientific Computation and Mathematical  Modeling\newline
$^{2}$Department of Mathematics\newline
$^{3}$Institute for Physical Science \& Technology\newline \ University of Maryland, College Park}

%%%% Subject entries to be placed here %%%%
\subjclass{92D25, 35Q35, 76N10}

%%%% Keyword entries to be placed here %%%%
\keywords{flocking, alignment, hydrodynamics, regularity, critical thresholds.}

%%%% Insert corresponding author and its email address}
%\corres{Eitan Tadmor\\
%\email{tadmor@cscamm.umd.edu}}

%%%% Abstract text to be placed here %%%%%%%%%%%%
\begin{abstract}
We study the large-time behavior of Eulerian systems augmented with non-local alignment. Such systems arise as hydrodynamic descriptions of agent-based models for self-organized dynamics, e.g., Cucker-Smale and Motsch-Tadmor  models \cite{CS,MT}. We prove that in analogy with the agent-based models, the presence of non-local alignment enforces \emph{strong} solutions to self-organize into a macroscopic flock. This then raises the question of existence of such strong solutions. We address this question in one- and two-dimensional setups, proving global regularity for \emph{sub-critical} initial data. Indeed, we show that there exist \emph{critical thresholds} in the phase space of initial configuration which dictate the global regularity vs. a finite time blow-up. In particular, we explore the regularity of nonlocal alignment in the presence of vacuum.
\end{abstract}
%%%%%%%%%%%%%%%%%%%%%%%%%%%

\maketitle

%%%%%%%%%% Insert the texts which can accomdate on firstpage in the tag "fmtext" %%%%%

%%%%%%%%%%%%%%%%%%%%%%%%%%%%%%%%%%%%%%%
\section{Introduction}
%%%%%%%%%%%%%%%%%%%%%%%%%%%%%%%%%%%%%%%
We consider the following system of hydrodynamic equations for density $\rho(\x,t)$ and velocity $\u(\x,t)$
\begin{subequations}\label{eq:first}
\begin{align}
\rho_t+\div(\rho\u)&=0,\qquad \x\in\RR^n, t\geq 0,\\
\u_t+\u\cdot\grad\u+\grad p&
=\int_{\RR^n}a(\x,\y)(\u(\y)-\u(\x))\rho(\y)d\y,\label{eq:firstu}
\end{align}
subject to initial conditions
\begin{equation}
\rho(\x,0)=\rho_0(\x)\geq 0,\qquad \u(\x,0)=\u_0(\x).
\end{equation}
\end{subequations}
Such systems arise as macroscopic description of 
the agent based models, e.g. \cite{HT}, in which every agent adjusts its velocity to that of its neighbors through the process of \emph{alignment}, dictated by the \emph{interaction kernel} $a(\cdot,\cdot)$, 
\begin{equation}\label{eq:particle}
\displaystyle\dot{\x}_i=\v_i,\quad
\dot{\v}_i=\sum_{j=1}^N a(\x_i,\x_j)(\v_j-\v_i).
\end{equation}

%\end{fmtext}

\noindent
The expected long time behavior of these agents is to self-organize into finitely many \emph{clusters}, and in particular, depending on the properties of the {interaction kernel}, $a(\cdot,\cdot)$, to flock into one such cluster;  consult the recent reviews \cite{VZ,MT2}. The goal of this paper is to study the \emph{flocking} phenomenon of the corresponding hydrodynamic description (\ref{eq:first}), or flocking hydrodynamics for short.

We shall discuss two prototype models: the celebrated model
of Cucker-Smale (CS) \cite{CS,CS2} which employs a symmetric
interaction kernel
\begin{subequations}\label{eqs:mod}
\begin{equation}\label{eq:modCS}
a(\x,\y)=\phi(|\x-\y|),
\end{equation}
and a related model of Motsch \& Tadmor (MT) introduced in \cite{MT},
which provides a better description of far-from-equilibrium
 flocking dynamics, using a non-symmetric (time-dependent) interaction of the form,
\begin{equation}\label{eq:modMT}
a(\x,\y)=\frac{\phi(|\x-\y|)}{\int_{\RR^n}\phi(|\x-\z|)\rho(\z,t)d\z},
\end{equation}
\end{subequations}
Here, $\phi=\phi(r)$ is the \emph{influence function} which is  assumed
to decrease in $r$, thus reflecting the intuition that alignment
becomes weaker as the distance becomes larger (but consult \cite{MT2}).

To put our discussion into context, consider the hydrodynamic model (\ref{eq:first}) with the symmetric CS alignment $a(\x,\y)=\phi(|\x-\y|)$. There are
three different regimes of interest, depending on the behavior of $\phi$  near the origin and at infinity ---
local dissipation, fractional dissipation and nonlocal alignment. Here
is a brief overview.

\textit{\#1. Local dissipation.} Assume $\phi$ is bounded and decay
sufficiently fast at infinity, such that
$\displaystyle\int_0^\infty\phi(r)r^{n+1}dr$ is finite (in particular, including compactly supported $\phi$'s).
We process the hyperbolic scaling, $(\x,t)\mapsto\left(\frac{\x}{\epsilon},\frac{t}{\epsilon}\right)$ and $\phi\mapsto\phi_\epsilon:=
\frac{1}{\epsilon^{n+2}}\phi\left(\frac{|\x|}{\epsilon}\right), \ \rho \mapsto \frac{\rho}{\epsilon}$,
and let $\epsilon$ go to zero: one arrives at the following system (expressed in the usual conservative form in terms  $\omega_n$,  the surface area of the unit sphere in $\RR^n$)
\begin{subequations}\label{eq:local}
\begin{align}
&\rho_t+\div(\rho\u)=0,\label{eq:localrho}\\
& (\rho\u)_t+\div(\rho\u\otimes\u)+\grad P=
C\div\left(\mu(\rho)\grad\u\right), \quad C:=\displaystyle\frac{\omega_n}{2n}\int_0^\infty\phi(r)r^{n+1}dr,\label{eq:localm}
\end{align}
\end{subequations}
with pressure $P(\rho):=\int^\rho s p'(s)ds$ and viscosity coefficient $\mu(\rho)=\rho^2$.
System \eqref{eq:localrho}\eqref{eq:localm} belongs to the class of
compressible Navier-Stokes equations
with degenerate viscosity $\mu=\rho^\theta, \theta>0$ which vanishes at the vacuum.
The study of such equations is mostly limited to  one dimension.
For existence and uniqueness of  weak solution with ``moderate degeneracy'', $\theta<1/2$, we refer to
\cite{LXY,YYZ,YZ}.
Mellet and Vasseur \cite{MV} proved that the degenerate viscosity is nevertheless strong enough to enforce global
existence and uniqueness of the strong solution away from vacuum
assuming $\rho_0>0$.

\textit{\#2. Fractional dissipation.}
Consider an influence function with a sufficiently strong singularity at the origin,
$\phi(r)\sim r^{-n+2\alpha}$, associated with fractional dissipation of order $2\alpha$. The corresponding 
incompressible setup (setting, formally, $\rho \equiv 1$ in \eqref{eq:first}) reads
\[
\u_t+\u\cdot\grad\u+\grad p=-(-\Delta)^\alpha\u, \qquad 
\div\u=0.
\]
$L^2$-energy bound implies that global smooth solutions exist for $\alpha>\frac{1}{2}+\frac{n}{4}$, e.g. \cite{KP2,Wu}. Additional pointwise bounds available in the one-dimensional case, imply that Burgers' equation with fractional dissipation, $\u_t+\u\cdot\grad\u=-(-\Delta)^\alpha\u$, admits global solutions  for $\alpha > 1/2$; the critical case, $\alpha=1/2$ was the subject of  extensive recent studies  \cite{CV,KNS,CoV}

\textit{\#3. Nonlocal alignment.}
In this paper we focus our attention on the remaining case where $\phi$ is bounded at the origin and decays sufficiently slow at infinity. As a prototypical example we may consider $\phi(r)=(1+r)^{-\alpha}$ with $\alpha<1$: the nonlocal alignment (due to the divergent tail of $\int^\infty \phi(r)dr$)  enforces an unconditional flocking of both the CS and MT models (\ref{eq:first}),(\ref{eqs:mod}),  consult \cite{CS,HT,HL,MT2}. 

The main system we are concerned with in this paper is the
corresponding ``pressure-less'' compressible Euler equations with nonlocal alignment\footnote{The same phenomenon of flocking hydrodynamics for sub-critical initial data occurs in the presence  of pressure, e.g., \cite{TW}
and will be discussed in a future work.}
\begin{subequations}\label{eq:main}
\begin{align}
\rho_t+\div(\rho\u)&=0,\label{eq:rho}\\
\u_t+\u\cdot\grad\u&
=\int_{\RR^n}\frac{\phi(|\x-\y|)}{\Phi(\x,t)}(\u(\y)-\u(\x))\rho(\y)d\y, 
\qquad \x\in\RR^n, t\geq 0, \label{eq:u}
\end{align}
subject to compactly supported initial density $\rho_0$ and uniformly
bounded initial velocity $\u_0$,
\begin{equation}\label{eq:IC}
\rho(\x,0)=\rho_0(\x)\in L^1_+(\RR^n),\qquad \u(\x,0)=\u_0(\x)\in W^{1,\infty}(\RR^n).
\end{equation}

The  scaling function $\Phi(\x,t)$ corresponds to the CS and MT models,
\begin{equation}
\Phi(\x,t)\left\{\begin{array}{ll} \equiv 1 & \mbox{CS model},\\
                                                \displaystyle =\int_{\RR^n} \phi(|\x-\y)|)\rho(\y,t)d\y, & \mbox{MT model}. \end{array}\right. 
\end{equation}
\end{subequations}
Throughout the paper, the influence function, $\phi$, is assumed non-increasing, Lipschitz and non-local in the sense of having a  divergent  tail
\begin{equation}\label{eq:phi2}\tag{H}
\int^\infty\phi(r)dr=\infty.
\end{equation}
Without loss of generality, we assume $\phi(0)=1$.

Our first main result, stated in  theorem \ref{thm:flock} below shows, in analogy with the agent-based models, that Lipschitz solutions of \eqref{eq:main} driven by nonlocal alignment are self-organized into a macroscopic flock. It is therefore natural to ask, when does the system (\ref{eq:main}) preserve the Lipschitz
regularity of $\u(\cdot,t)$ for all time? This question regarding the uniform global bound   $\|\grad_\x\u(\cdot,t)\|_{L^\infty} < \infty$  occupies the rest of the paper. We begin with the one-dimensional case.
Our second main result, summarized in corollary \ref{thm:global}, shows that there exists a
large set of so-called \emph{sub-critical initial
configurations}, under which $\|u_x(\cdot,t)\|_{L^\infty}$
remains bounded for all time. On the other hand, there exists
a set of super-critical initial configurations which leads to the lose of regularity at finite time,
$\|\grad\u(\cdot,t)\|_{L^\infty} \rightarrow \infty$ as $t\uparrow T_c$. The so called \emph{critical threshold} phenomenon 
 for Eulerian dynamics was first systematically 
studied in \cite{ELT} in Euler-Poisson equations, followed by a series
of related studies \cite{LT2,LT3,CT,TW}.
In particular, Liu and Tadmor \cite{LT}  studied the critical threshold
phenomenon of the one-dimensional Burgers equation with nonlocal
convolution source term,
$ \displaystyle u_t+uu_x=\int_{-\infty}^\infty\phi(|x-y|)(u(y)-u(x))dy$,
 corresponding to  the one-dimensional
CS alignment system \eqref{eq:u} with $\rho\equiv1$.
Here, we extend this result, proving critical threshold phenomenon for the non-vacuum density-dependent model (theorem \ref{thm:1DNF}). Thus, global regularity and hence flocking follow for sub-critical data, which in turn enables us to 
 significantly improve the critical threshold derived in \cite{LT}.
Next, we  extend the global regularity and hence flocking result of CS hydrodynamics to two-space dimension (theorem \ref{thm:2DCS}). These global regularity results are complemented by the unique feature of the non-local alignment, which prevents finite-time blow-up   dynamics within regions of vacuum  (theorem \ref{thm:vacuum}).
Finally, the flocking of one- and  two-dimensional MT hydrodynamics  is summarized in theorem \ref{thm:mainMT}.

The  paper is organized as follows. We state the main
results in section \ref{sec:results}. In section \ref{sec:flock}, we
prove that global strong solution implies flocking. In section
\ref{sec:nonvacuum}, we prove critical thresholds for Cucker-Smale
system \eqref{eq:main}, in dimension one and two. 
The fast alignment enhances the dynamics and leads to improved critical thresholds; this is carried out in section \ref{sec:fast}. In section
\ref{sec:vacuum}, we discuss the boundedness of $\grad_\x\u$ inside
the vacuum. We conclude in section \ref{sec:MT}, where we extend these results to the Motsch-Tadmor system. 

%%%%%%%%%%%%%%%%%%%%%%%%%%%%%%%%%%%%%%%
\section{Statement of main results}\label{sec:results}
%%%%%%%%%%%%%%%%%%%%%%%%%%%%%%%%%%%%%%%

\subsection{Flocking of strong solutions}
We begin with the notion of flocking, e.g. \cite{MT}.
\begin{defn}[Flock]\label{def:flock}
We say that a solution $(\rho,\u)$ of (\ref{eq:main}) converges to a flock if the following hold:
\begin{enumerate}
\item Spatial diameter is uniformly bounded, \textit{i.e.}
there exists $D<+\infty$ such that
\[
\Vx(t):=\sup\{|\x-\y|,~\x,\y\in supp(\rho(t))\} \leq D, \quad \forall t>0.
\]
\item Velocity diameter decays to $0$ for large time, \textit{i.e.}
\[
\lim_{t\to\infty}\Vu(t)=0, \quad 
\Vu(t):=\sup\{|\u(\x,t)-\u(\y,t)|,~\x,\y\in supp(\rho(t))\}.
\]
\end{enumerate}
If $V(t)$ decays exponentially fast in time, we say the flock has
\emph{fast alignment} property.
\end{defn}

The following theorem shows flocking property for \emph{strong} (Lipschitz) solutions of the 
non-local alignment  system \eqref{eq:main}. In fact  the following
 fast alignment holds for strong solutions of both CS and MT models.
This is quantified in terms of the following interaction bound, which will be used throughout the paper,
\[
\int_\y a(\x,\y)\rho(\y)d\y  \left\{\begin{array}{ll} \leq m, & \mbox{CS model},\\ \\ \equiv 1, & \mbox{MT model}.\end{array}\right.
\]
Recalling that $\phi(\cdot)\leq 1$ one may use the CS interaction bound $m:=\int_{\RR^n}\rho_0(\y)d\y$, whereas $m=1$ for the MT model.

\begin{thm}[Flock with fast alignment]\label{thm:flock}
Let $(\rho,\u)$ be a global strong solution of system \eqref{eq:main} subject to a compactly supported density $\rho_0=\rho(\cdot,0)$ and bounded velocity $\u_0=\u(\cdot,0)\in L^\infty$. Assume a 
 influence function $\phi$ is global in the sense that 
\begin{equation}\label{eq:phi1}
m\int_{\Vx_0}^\infty\phi(r)dr>\Vu_0.
\end{equation}
Then, $(\rho,\u)$ converges to a flock with fast alignment; specifically, 
there exists a finite number $D$, such that
\[
\sup_{t\geq 0}S(t)\leq D,\quad V(t)\leq V_0 e^{-m\phi(D)t}.
\]
\end{thm}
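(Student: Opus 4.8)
The plan is to reduce the flocking claim to a closed pair of differential inequalities for the spatial and velocity diameters $S(t)$ and $V(t)$, and then to close them with a Lyapunov-type argument of the kind used for the agent-based model \eqref{eq:particle}. Since $(\rho,\u)$ is assumed to be a \emph{strong} (Lipschitz) solution, the characteristic flow $\dot\Psi_t(\x_0)=\u(\Psi_t(\x_0),t)$, $\Psi_0=\mathrm{id}$, is well defined, and $\mathrm{supp}\,\rho(t)=\Psi_t(\mathrm{supp}\,\rho_0)$ stays compact on every finite time interval; along a characteristic, \eqref{eq:u} reads $\frac{d}{dt}\u(\Psi_t(\x_0),t)=\int_{\RR^n}a(\Psi_t(\x_0),\y)\big(\u(\y,t)-\u(\Psi_t(\x_0),t)\big)\rho(\y,t)\,d\y$. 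Evaluating this at a characteristic where $\mathbf e\cdot\u$ is maximal over $\mathrm{supp}\,\rho(t)$ (the integrand is then nonpositive and $a\ge0$) yields the velocity maximum principle: $\{\u(\x,t):\x\in\mathrm{supp}\,\rho(t)\}$ stays in the closed convex hull of $\{\u_0(\x):\x\in\mathrm{supp}\,\rho_0\}$, so $\|\u(\cdot,t)\|_{L^\infty(\mathrm{supp}\,\rho(t))}\le\|\u_0\|_{L^\infty}$ and the interaction term is bounded by $2\|\u_0\|_{L^\infty}$ times the interaction bound. Hence the maps $t\mapsto\mathbf e\cdot\u(\Psi_t(\x_0),t)$ form an equi-Lipschitz family, so their extrema over $(\mathbf e,\x_0)$ -- and in turn $S(t)$ and $V(t)$ -- are locally Lipschitz in $t$.

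The core of the argument is then the pair of inequalities, valid for a.e.\ $t>0$,
\[
\dot S(t)\le V(t),\qquad \dot V(t)\le -m\,\phi\big(S(t)\big)\,V(t),
\]
with $m$ the interaction bound (the total mass $\int_{\RR^n}\rho_0\,d\x$ for the CS kernel, and $m=1$ for MT), i.e.\ exactly the constant in \eqref{eq:phi1}. The first follows from $\big|\tfrac{d}{dt}|\Psi_t(\x_0)-\Psi_t(\y_0)|\big|\le|\u(\Psi_t\x_0,t)-\u(\Psi_t\y_0,t)|\le V(t)$ applied to a diameter-realizing pair. For the second, at a.e.\ $t$ one picks a unit vector $\mathbf e$ and characteristics $X^{\pm}(t)\in\mathrm{supp}\,\rho(t)$ realizing $V(t)=\mathbf e\cdot\big(\u(X^+,t)-\u(X^-,t)\big)$, and differentiates; using $\mathbf e\cdot(\u(\y)-\u(X^+))\le0\le\mathbf e\cdot(\u(\y)-\u(X^-))$ for $\y\in\mathrm{supp}\,\rho(t)$, together with $\phi(|X^{\pm}-\y|)\ge\phi(S(t))$ (monotonicity of $\phi$) and, for the MT kernel, $\Phi(\x,t)=\int\phi(|\x-\z|)\rho(\z,t)\,d\z\le\int\rho(\z,t)\,d\z$, each interaction term is controlled by the $\rho$-average of $\mathbf e\cdot\u$; these averages cancel in the difference, leaving $-m\phi(S)V$.

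To close, set $\mathcal E(t):=V(t)+m\int_0^{S(t)}\phi(r)\,dr$. The two inequalities give $\dot{\mathcal E}=\dot V+m\phi(S)\dot S\le m\phi(S)\big(\dot S-V\big)\le0$, hence $\mathcal E(t)\le\mathcal E(0)$, that is $m\int_{S_0}^{S(t)}\phi(r)\,dr\le V_0$ for all $t$. Since $\phi$ is non-increasing with a divergent tail it is strictly positive, so hypothesis \eqref{eq:phi1} -- $V_0<m\int_{S_0}^\infty\phi$ -- forces $S(t)\le D$ for all $t$, where $D\ge S_0$ is the finite number defined by $m\int_{S_0}^{D}\phi(r)\,dr=V_0$ (with $D=S_0$ when $V_0=0$); this is the uniform spatial bound. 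Inserting $\phi(S(s))\ge\phi(D)$ into $\dot V\le-m\phi(S)V$ and integrating then gives $V(t)\le V_0 e^{-m\phi(D)t}\to0$, the asserted fast alignment. The MT case is handled identically with $m=1$.

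The step requiring genuine care is not the ODE algebra but the first one: verifying that $S(t)$ and the directional extrema $\max/\min_{\x\in\mathrm{supp}\,\rho(t)}\mathbf e\cdot\u(\x,t)$ are truly Lipschitz in $t$ with the stated a.e.\ derivatives, so that the envelope (Danskin) differentiation producing $\dot S\le V$ and $\dot V\le-m\phi(S)V$ is legitimate -- this is precisely where the \emph{strong}-solution hypothesis is used, guaranteeing well-posed characteristics and a transported compact support. If one wishes to avoid differentiating $V$ directly, one may instead run Gr\"onwall in each fixed direction $\mathbf e$ to get $V(t)\le V_0\exp\!\big(-m\int_0^t\phi(S(s))\,ds\big)$ and apply the Lyapunov argument to the $C^1$ majorant $\bar V(t):=V_0\exp\!\big(-m\int_0^t\phi(S(s))\,ds\big)$, which satisfies $\dot{\bar V}=-m\phi(S)\bar V$ exactly.
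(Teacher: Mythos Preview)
Your argument is correct and follows the same overall architecture as the paper: establish the system of differential inequalities $\dot S\le V$, $\dot V\le -m\phi(S)V$ (the paper's Proposition~\ref{prop:SDDI}), then close with the Lyapunov functional $\mathcal E=V+m\int_0^S\phi$ exactly as in the paper's proof of the theorem, with the same $D$ in \eqref{eq:D}.

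The genuine difference is in how the velocity-decay inequality $\dot V\le -m\phi(S)V$ is obtained. The paper works with $|\u(Y)-\u(X)|^2$ and, following \cite{MT2}, introduces the normalized kernel $b(\x,\y)$ together with the coupling $\eta(\z)=\min\{b(X,\z),b(Y,\z)\}$; the decay rate then emerges from the lower bound $\int\eta\ge\phi(S)$. Your route is more elementary: project onto a fixed direction $\mathbf e$, exploit the sign of $\mathbf e\cdot(\u(\y)-\u(X^\pm))$ at the extremal characteristics to replace $\phi(|X^\pm-\y|)$ by $\phi(S)$ (and $\Phi$ by $\int\rho$ in the MT case), and observe that the resulting $\rho$-averages of $\mathbf e\cdot\u$ cancel in the difference. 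This directional argument is shorter and avoids the auxiliary $b$-kernel, while the paper's coupling technique is the more systematic device from \cite{MT2} for non-symmetric interactions. You also make explicit the regularity issue (Lipschitz continuity of $S,V$ and the Danskin/envelope step), which the paper leaves implicit; your $\bar V$-majorant workaround is a clean way to sidestep differentiating the supremum directly.
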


\begin{rem}
With the slow decay assumption \eqref{eq:phi2} on $\phi$, condition
\eqref{eq:phi1} automatically holds with finite $\Vx_0=\Vx(0)$ and $\Vu_0=\Vu(0)$.  The
constant $D$ depends on $\phi, S_0$ and $V_0$. An explicit expression
of $D$ is given in \eqref{eq:D} below.
\end{rem}
\noindent
It is well-known that strong
solutions persist as long as $\|\grad_\x\u(\cdot,t)\|_{L^\infty}$ remains bounded. Motivated by theorem \ref{thm:flock}, we  study below the set of  initial configuration which guarantee the uniform boundedness of  
$\grad_\x\u$ globally in time, which in turn implies the emergence of a flock.

%%%%%%%%%%%%%%%%%%%%%%%%%%%%%%%%
\subsection{Critical thresholds in one dimensional Cucker-Smale model}
%%%%%%%%%%%%%%%%%%%%%%%%%%%%%%%%
We study the uniform boundedness of $u_x$ for the one dimensional
Cucker-Smale alignment system
\begin{subequations}\label{eq:main1D}
\begin{align}
\rho_t+(\rho u)_x&=0, \label{eq:main1Da}\\
u_t+uu_x&=\int_\RR\phi(|x-y|)(u(y)-u(x))\rho(y)dy,\quad x\in\RR, t\geq0, \label{eq:main1Db}
\end{align}
subject to initial conditions \eqref{eq:IC}, with a non-local interaction 
\eqref{eq:phi2}.
\end{subequations}
In sections \ref{sec:nonvacuum} and \ref{sec:fast} we prove that
 if the initial velocity has a bounded diameter $\I$, and if  its  slope is not too negative relative to $\I$,  then
$\|u_x\|_{L^\infty(supp(\rho))}$ remains uniformly bounded for all time\footnote{Observe that if $\phi_0\equiv 0$ then (\ref{eq:main1Db}) is reduced to the inviscid Burgers' equation with generic finite-time blow-up  unless $d_0>0$. Thus, the addition of non-local alignment has a regularization effect, by increasing the initial threshold for global regularity.}.

\begin{thm}[1D critical thresholds for non-vacuum]\label{thm:1DCS}
Consider initial value problem of \eqref{eq:main1D}. There exists
threshold functions $\ThU>\ThL$ (depending on $\phi$), such that the following hold.\newline
\begin{itemize}
\item If the initial condition satisfies
\begin{equation}\label{eq:ThU}
d_0:=\inf_{x\in supp(\rho_0)}u_{0x}(x)>\ThU(\I), \qquad
\I:=\sup_{x,y\in supp(\rho_0)}|u_0(x)-u_0(y)|.
\end{equation}
then $u_x(x,t)$ remains bounded for all $(x,t)\in supp (\rho)$.\newline
\item If the initial condition satisfies $d_0<\ThL(\I)$,
then there exists a finite time blow-up $t=T_c>0$ such that
 $\inf_{x\in supp(\rho(\cdot,t))} u_x(x,t)\to-\infty$ as $t\rightarrow T_c-$.
\end{itemize}
\end{thm}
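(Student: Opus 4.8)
The plan is to follow the flow along characteristics $\dot x(t)=u(x(t),t)$ and reduce the problem to a scalar Riccati equation. Write $\frac{D}{Dt}=\partial_t+u\partial_x$ for the material derivative, set $d:=u_x$, and introduce the averaged influence $e(x,t):=\int_\RR\phi(|x-y|)\rho(y,t)\,dy$; since $\phi\le\phi(0)=1$ and mass is conserved, $\int\rho(\cdot,t)\,dx\equiv m$, one has $0<e\le m$ throughout. Differentiating \eqref{eq:main1Db} in $x$ gives $\frac{Dd}{Dt}=-d^2+F_x$, where $F$ denotes the nonlocal right-hand side. The first — and key — step is to expand $F_x$ and to observe, after an integration by parts in $y$ which rewrites $\int\phi'(|x-y|)\,\mathrm{sgn}(x-y)u(y)\rho(y)\,dy$ as $-e_t$ by the continuity equation \eqref{eq:main1Da}, the identity $F_x=-\frac{De}{Dt}-de$. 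Consequently, along characteristics,
\[ \frac{D}{Dt}(d+e)=-d\,(d+e),\quad \frac{D\rho}{Dt}=-d\,\rho,\quad\text{hence}\quad \frac{D}{Dt}\,\frac{d+e}{\rho}=0, \]
so the quantity $q/\rho$, with $q:=u_x+e$, is invariant along each characteristic.

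Thus $q(x(t),t)=c_0\,\rho(x(t),t)$ with $c_0:=\big(u_{0x}(x_0)+e_0(x_0)\big)/\rho_0(x_0)$ fixed by the foot point $x_0\in supp(\rho_0)$, where $e_0(x):=\int_\RR\phi(|x-y|)\rho_0(y)\,dy$. Substituting $d=q-e=c_0\rho-e$ into $\frac{D\rho}{Dt}=-d\rho$ reduces the dynamics along that characteristic to $\dot\rho=-c_0\rho^2+e(t)\rho$ with bounded coefficient $0<e(t)\le m$. If $c_0>0$, the comparison $\dot\rho\le\rho(m-c_0\rho)$ keeps $\rho$ bounded, hence so is $d=c_0\rho-e$ (bounded above since $\rho$ is, below by $-m$); using the flocking bound of Theorem \ref{thm:flock} to get $e\ge\phi(D)m$ on $supp(\rho(t))$, $\rho$ also stays bounded away from $0$, so the non-vacuum region is preserved. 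If $c_0=0$ then $d\equiv-e$ is bounded outright. If $c_0<0$, then $\dot\rho\ge|c_0|\rho^2$ forces $\rho\to+\infty$ by time $1/(|c_0|\rho_0(x_0))$, whence $d=-|c_0|\rho-e\to-\infty$. So boundedness of $u_x$ on $supp(\rho)$ is governed exactly by the sign of $u_{0x}(x_0)+e_0(x_0)$ over the support.

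It remains to turn this sharp, but $\rho_0$-dependent, dichotomy into the thresholds $\ThUL$ in the $(\J,\I)$-plane. On $supp(\rho_0)$ one has the two-sided bound $\phi(S_0)m\le e_0(x)\le m$ (with $S_0:=\mathrm{diam}\,supp(\rho_0)$), so that $\J=\inf_{x_0}u_{0x}(x_0)>-\phi(S_0)m$ already makes $u_{0x}+e_0>0$ everywhere — hence global regularity — while $\J<-m$ produces a foot point with $u_{0x}+e_0<0$ — hence finite-time blow-up. This gives admissible threshold functions $\ThL\le\ThU$; the genuine $\I=V_0$–dependence, and the improvement over \cite{LT}, come in Section \ref{sec:fast}, where the decay of $V(t)$ sharpens the lower bound on $e$ along characteristics. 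Finally, a global-in-time bound on $u_x$ is the standard continuation criterion for strong solutions, so subcritical data yield a global strong solution, and Theorem \ref{thm:flock} then upgrades this to flocking.

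The identity $F_x=-\frac{De}{Dt}-de$ is the crux of the argument, but it is a short computation once anticipated; the points that really need care are instead: (i) the bootstrap — a priori only a local strong solution is available, and one runs the characteristic ODEs on the maximal existence interval, using the derived bounds to exclude blow-up in the subcritical regime; (ii) the vacuum boundary $\partial\big(supp(\rho_0)\big)$, where $c_0=q_0/\rho_0$ degenerates and one argues by continuity, using only the sign of $q_0=u_{0x}+e_0$ and the fact that characteristics starting where $\rho_0=0$ never re-enter $supp(\rho)$; and (iii) pinning down the optimal $\ThUL$, since the cleanest invariant lives on the combined quantity $u_{0x}+e_0$ and projecting back to $(\J,\I)$ unavoidably opens a gap $\big[\ThL(\I),\ThU(\I)\big]$.
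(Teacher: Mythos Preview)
Your argument is correct, and the key identity $F_x=-\tfrac{De}{Dt}-de$ does hold, yielding the conserved quantity $(u_x+\phi*\rho)/\rho$ along characteristics and the sharp pointwise dichotomy according to the sign of $u_{0x}+e_0$. However, this is \emph{not} how the paper proves Theorem~\ref{thm:1DCS}. The paper never discovers or exploits this invariant; instead it differentiates \eqref{eq:main1Db} to obtain the Riccati equation $d'=-d^2-pd+Q$ with $p=\phi*\rho$ and $Q=\int\partial_x\phi(|x-y|)(u(y)-u(x))\rho(y)\,dy$, then bounds $p\in[\phi(D)m,m]$ and $|Q|\le \Lip{\phi}m\,V(t)$ via Proposition~\ref{prop:fit} and the flocking estimate. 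The resulting majorant system $d'=-d^2-pd+cV$, $\dot V\le -GV$ is analyzed through a phase-plane comparison (Theorem~\ref{thm:lambdadv}), and the thresholds $\sigma_\pm$ are defined \emph{implicitly} as separatrices of this $2\times2$ system.

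What each approach buys: your conserved-quantity route gives a sharp, gap-free pointwise criterion ($u_{0x}+e_0\gtrless 0$) with essentially no analysis, but the projection onto the $(\J,\I)$-plane that the theorem requires is crude --- your $\sigma_+=-\phi(S_0)m$ and $\sigma_-=-m$ are constant in $V_0$, and you correctly note that the genuine $V_0$-dependence advertised in Figure~\ref{fig:oned} is deferred. The paper's majorant route produces a built-in gap $[\sigma_-,\sigma_+]$ but yields $V_0$-dependent thresholds directly, and --- more importantly for the rest of the paper --- the same template extends verbatim to the two-dimensional case (Theorem~\ref{thm:2DCS}) and the Motsch--Tadmor model (Theorem~\ref{thm:mainMT}), where no analogue of your scalar invariant is available. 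Your observation that the bootstrap on the maximal existence interval needs care applies equally to the paper's argument, which tacitly uses the flocking bound $S(t)\le D$ inside Proposition~\ref{prop:fit}.
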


\begin{rem}
Detailed expressions of threshold functions $\ThU$ and $\ThL$ are given in
section \ref{sec:1DFF}. Figure \ref{fig:oned} illustrates the two
thresholds. To ensure boundedness of $u_x$, there are two
requirements for the initial configurations:

\qquad (i) The initial slope of velocity $u_{0x}$ is not too negative; and

\qquad (ii) the initial diameter of velocity $V_0$ is not too large.

Note that due to symmetry, the steady state of the CS system (\ref{eq:main1D}) is given by the average value, $u=\bar{u}_0$, and the upper threshold condition (\ref{eq:ThU}) 
tells us  that if the initial configuration is not far away from that equilibrium,
then strong solution exists  and non-local alignment enforces its flocking towards  steady state.
\end{rem}

\begin{figure}[H]
\vspace*{-4.5cm}
\centering
\qquad \qquad \includegraphics[width=.6\linewidth]{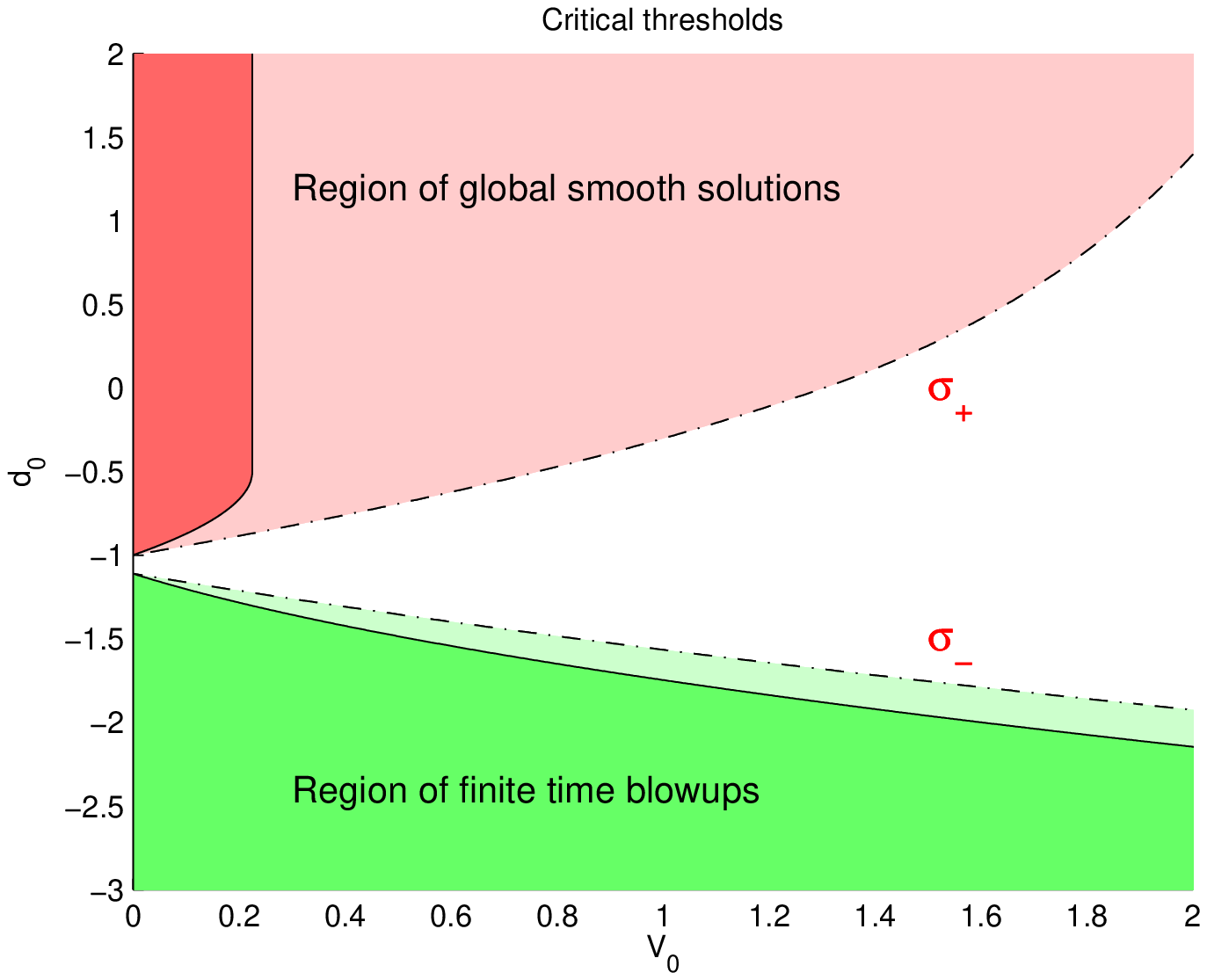}
\vspace*{4.5cm}
\caption{Illustration of the critical thresholds in one dimension}\label{fig:oned}
\end{figure}

\begin{rem}
The darker areas in figure \ref{fig:oned} represents the thresholds
result stated in theorem \ref{thm:1DNF}. It is an extension of
the result in \cite{LT} for the case where $\rho\equiv1$. Taking
advantage of  the fast alignment property, we are able to improve the
result to the lighter area.
\end{rem}

The last theorem is restricted to the non-vacuum portion of the solution.
For local systems, e.g.\eqref{eq:local}, the dynamics  inside the vacuum
acts the same way as the inviscid Burgers equation, with generic formation of  shock discontinuities. It is here that we take advantage of the non-local character of the alignment model (\ref{eq:main}): 
our next theorem identifies an upper threshold which ensures that 
$u_x$ remains bounded even outside the support of $\rho$. Thus, the nonlocal interaction helps smoothing the equation and
enables us to find thresholds in the vacuum region.

\begin{thm}[1D upper threshold for the vacuum region]\label{thm:vacuum}
Consider initial value problem of \eqref{eq:main1D}.
Let $V_0^\lambda$ denote the diameter of the initial velocity
between a point in the non-vacuum region  and a point at most $\lambda$
away from that region,
$V_0^\lambda:=\sup\big\{|u_0(x)-u_0(y)|~:~dist(x,supp(\rho_0))\leq
\lambda,~~ y\in supp(\rho_0),
\big\}$.\newline
If the initial configuration satisfies
\begin{subequations}\label{eq:vacuumCond}
\begin{align}
V_0^\lambda\leq&\frac{m\phi^2(\lambda+D)}{4|\phi'(\lambda)|+2|\phi'(\lambda+D)|}
\quad\text{for all}~\lambda\geq0,\label{eq:vacuumphi}\\
u_{0x}(x)\geq&-\frac{m}{2}\phi( dist(x,supp(\rho_0))+D),
\end{align}
\end{subequations}
then $u_x(x,t)$ remains bounded for all $(x,t)\not\in supp (\rho)$.
\end{thm}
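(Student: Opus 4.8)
The plan is to track the evolution of $u_x$ along characteristics, exactly as in the non-vacuum theorem~\ref{thm:1DCS}, but now allowing the base point of the characteristic to lie outside $supp(\rho_0)$. Let $x(t)$ solve $\dot x = u(x(t),t)$ with $x(0)\notin supp(\rho_0)$, and set $d(t):=u_x(x(t),t)$. Differentiating \eqref{eq:main1Db} in $x$ and evaluating along the characteristic, one obtains a Riccati-type equation of the form
\begin{equation}\label{eq:vac-riccati}
\dot d = -d^2 - \Big(\!\int_\RR \phi(|x-y|)\rho(y)\,dy\Big) d
+ \int_\RR \phi'(|x-y|)\,\mathrm{sgn}(x-y)\,(u(y)-u(x))\rho(y)\,dy.
\end{equation}
Since $x(t)$ is in the vacuum, the coefficient $\int \phi(|x-y|)\rho(y)\,dy$ of the linear damping term need not have a useful sign or lower bound, so the strategy is to discard it when it helps and, crucially, to control the last (forcing) integral: it is bounded in absolute value by $\|\phi'\|$-weighted velocity differences between the vacuum point $x(t)$ and points of the (moving) support. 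This is precisely what the quantity $V_0^\lambda$ is designed to measure once we know how far $x(t)$ has drifted from $supp(\rho(t))$.

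The key geometric step is to bound $\mathrm{dist}(x(t),supp(\rho(t)))$ uniformly in time. First, theorem~\ref{thm:flock} applies (hypothesis \eqref{eq:phi2} guarantees \eqref{eq:phi1}), so $S(t)\le D$ for all $t$ and the velocity diameter over $supp(\rho(t))$ decays exponentially. Second, a vacuum characteristic can only move relative to the support at a speed controlled by the velocity gap between $x(t)$ and the nearest support point; I would show by a Gr\"onwall/continuity argument — feeding in the smallness assumption \eqref{eq:vacuumphi}, which forces the relevant $V_0^\lambda$ to stay small — that if $\lambda(t):=\mathrm{dist}(x(t),supp(\rho(t)))$ starts at some value $\lambda_0$ then it stays comparable to $\lambda_0$, so the weights $\phi(\lambda(t)+D)$, $|\phi'(\lambda(t))|$, $|\phi'(\lambda(t)+D)|$ appearing in \eqref{eq:vacuumCond} remain the governing constants. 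Here the monotonicity and Lipschitz bound on $\phi$ are used repeatedly, together with the fact that $|u(y)-u(x)|\le V_0^{\lambda(t)}$ for $y\in supp(\rho(t))$.

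With the forcing term in \eqref{eq:vac-riccati} bounded by a constant of the form $C_\lambda := \big(2|\phi'(\lambda)|+|\phi'(\lambda+D)|\big)V_0^{\lambda} \, m$ (matching the denominator/numerator split in \eqref{eq:vacuumphi}), the differential inequality becomes $\dot d \le -d^2 + C_\lambda$, which keeps $d(t)$ bounded above, and $\dot d \ge -d^2 - m d - C_\lambda$ (using $\int\phi\rho\le m$), whose invariant region — entered precisely when $d(0)=u_{0x}(x(0)) \ge -\tfrac{m}{2}\phi(\lambda(0)+D)$ after the algebra of completing the square with $C_\lambda \le \tfrac14 m^2\phi^2(\lambda+D)$ — keeps $d(t)$ bounded below. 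Taking the supremum over all vacuum characteristics gives the uniform bound on $u_x$ off $supp(\rho)$. The main obstacle is the geometric step: unlike in the non-vacuum case, the characteristic's distance to the support is itself a dynamic quantity, and one must close the bootstrap so that $\lambda(t)$ never grows enough to spoil the smallness hypothesis \eqref{eq:vacuumphi} — this is where the quadratic structure of \eqref{eq:vacuumphi} (a $\phi^2$ in the numerator against $|\phi'|$ in the denominator) is essential, since it is exactly the budget that a Gr\"onwall estimate on $\lambda(t)$ can afford.
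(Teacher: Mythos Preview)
Your proposal misidentifies the key mechanism. You assert that the damping coefficient $p:=\int\phi(|x-y|)\rho(y)\,dy$ ``need not have a useful sign or lower bound'' in the vacuum, but in fact it does: since $supp(\rho(\cdot,t))$ has diameter at most $D$ and lies at distance $L(x,t)$ from $x$, one has $p\ge\phi(L(x,t)+D)\,m>0$ (this is Proposition~\ref{prop:fitvacuum}, and the paper stresses it as \emph{the} estimate that distinguishes nonlocal alignment from inviscid Burgers in the vacuum). As a consequence your lower differential inequality $\dot d\ge -d^2-md-C_\lambda$ is wrong in the only regime that matters: for $d<0$ the term $-pd$ is positive, and its \emph{lower} bound requires the \emph{lower} bound on $p$, giving $\dot d\ge -d^2-\phi(L+D)m\,d-C_\lambda$. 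The barrier $-\tfrac{m}{2}\phi(\lambda+D)$ you quote is the vertex of \emph{this} corrected quadratic, not of the one you wrote.

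The second gap is structural. The paper does not freeze $\lambda(t)\approx\lambda_0$ and then run a constant-coefficient Riccati; instead it uses the \emph{time-dependent} lower barrier $\gamma(t):=-\tfrac{m}{2}\phi(L(X(t),t)+D)$ and checks that $d'(t)\ge\gamma'(t)$ whenever $d$ touches $\gamma$. The term $2|\phi'(\lambda+D)|$ in the denominator of \eqref{eq:vacuumphi} is not part of the forcing bound as you write --- it is precisely the budget for $|\gamma'(t)|\le\tfrac{m}{2}|\phi'(L+D)|\cdot|\dot L|$, while the $4|\phi'(\lambda)|$ covers the forcing $|Q|\le |\phi'(L)|\,V^\lambda m$; your $C_\lambda$ conflates the two and the algebra does not close. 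Finally, replacing the crude bounds $|\dot L|,\,|u(y)-u(x)|\le V^\infty(0)$ by the level-dependent $V_0^\lambda$ (which is what the theorem actually asserts) requires the fast-alignment estimate on level sets, Theorem~\ref{thm:flockvacuum}, giving $V^{L(x_0,0)}(t)\le V_0^{L(x_0,0)}e^{-m\phi(D^\lambda)t}$. Without this exponential decay your Gr\"onwall bootstrap on $\lambda(t)$ has no reason to close, since $|\dot\lambda|$ is only bounded by a velocity diameter that, a priori, need not be small or decaying.
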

\begin{rem}
Condition \eqref{eq:vacuumCond} has the same flavor as \eqref{eq:ThU}
for the non-vacuum area: the diameter  of the initial velocity is not too large
and the slope of the initial velocity is not too negative. For \eqref{eq:vacuumphi},
when $\lambda$ approaches zero,  the condition is equivalent to the
non-vacuum case. On the other hand, when $\lambda$ approaches
infinity, if $\phi(r)\sim r^{-\alpha}$, the right hand side is
proportional to $r^{1-\alpha}$. Thanks to the slow decay assumption on
$\phi$, \emph{i.e.} $\alpha<1$, \eqref{eq:vacuumphi} provides no
restrictions on $V_0^\infty$. Note that if $\alpha>1$, the condition
requires $V_0^\infty=0$ which can not be achieved unless $u$ is a constant.
\end{rem}
Combining the last two theorems, we conclude that the one-dimensional
CS hydrodynamics \eqref{eq:main1D} has global strong solutions for a  suitable set of \emph{sub-critical initial conditions}.

\begin{cor}[1D global strong solution]\label{thm:global}
Consider the one-dimensional CS system \eqref{eq:main1D} then there exist thresholds $\sigma_\pm$ such that the following holds.\newline
If initial configuration satisfies both \eqref{eq:ThU}, 
  \eqref{eq:vacuumCond}, then there
exists a strong solution $\rho\in L^\infty([0,+\infty),L^1(\RR))$ and $u\in
L^\infty([0,+\infty), W^{1,\infty}(\RR))$. Moreover, the solution
converges to a flock in the sense of definition \ref{def:flock}.\newline
If initial configuration satisfies $d_0<\sigma_-(V_0)$, then
 the corresponding  solution $(\rho,u)$ will blow-up at a finite time.
\end{cor}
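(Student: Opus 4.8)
The plan is to obtain Corollary~\ref{thm:global} by assembling Theorems~\ref{thm:1DCS}, \ref{thm:vacuum} and~\ref{thm:flock} within a standard local-existence/continuation framework, so that essentially all of the analytic work is already done and what remains is bookkeeping. First I would record the local theory: for initial data \eqref{eq:IC} the system \eqref{eq:main1D} admits a unique strong solution on a maximal interval $[0,T^\ast)$, with $\rho(\cdot,t)\ge0$ compactly supported and $u(\cdot,t)\in W^{1,\infty}(\RR)$, because the non-local forcing on the right of \eqref{eq:main1Db} is a bounded, locally Lipschitz perturbation as a map $W^{1,\infty}\to W^{1,\infty}$ (its $x$-derivative is controlled by $\|u_x\|_{L^\infty}$, $\|u\|_{L^\infty}$, $\|\rho\|_{L^1}$ and $\|\phi'\|_{L^\infty}$, using the Lipschitz hypothesis on $\phi$), and the support of $\rho$ is transported by the velocity field $u$. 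For such a solution the usual hyperbolic continuation criterion applies: if $T^\ast<\infty$ then $\limsup_{t\uparrow T^\ast}\|u_x(\cdot,t)\|_{L^\infty(\RR)}=\infty$. The goal is therefore to produce, under the sub-critical hypotheses \eqref{eq:ThU} and \eqref{eq:vacuumCond}, a bound on $\|u_x(\cdot,t)\|_{L^\infty(\RR)}$ valid on all of $[0,T^\ast)$.

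The key structural point is that the vacuum and non-vacuum sets are invariant under the characteristic flow $\dot X(t;\alpha)=u(X(t;\alpha),t)$: since $\rho_t+(\rho u)_x=0$, one has $supp(\rho(\cdot,t))=X(t;supp(\rho_0))$, so a characteristic issuing from $supp(\rho_0)$ stays in $supp(\rho(\cdot,t))$ and one issuing from the complement stays in the complement for all time. I may therefore bound $u_x$ separately on the two regions. On $supp(\rho(\cdot,t))$, hypothesis \eqref{eq:ThU} is exactly the hypothesis of Theorem~\ref{thm:1DCS}, whose proof supplies a time-uniform lower bound for $u_x(x,t)$ there; on the complement, hypothesis \eqref{eq:vacuumCond} is exactly the hypothesis of Theorem~\ref{thm:vacuum}, which supplies the analogous bound outside $supp(\rho)$, and the two estimates match continuously across the free boundary $\partial\, supp(\rho)$ (this is the content of the $\lambda\to0$ limit in \eqref{eq:vacuumphi}). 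A matching upper bound $u_x\le C$ is automatic: along characteristics $d:=u_x$ obeys a Riccati-type equation $d'=-d^2-a(x)d+g$ with $0\le a(x)\le m$ and $g$ bounded (by $m\,V_0\,\|\phi'\|_{L^\infty}$, since the velocity diameter does not exceed $V_0$), so $d'\le-d^2+C_0$ precludes blow-up to $+\infty$. Finally $u$ itself obeys a maximum principle — the convective term vanishes at an extremum of $u(\cdot,t)$ and there the alignment integrand has the favorable sign — giving $\|u(\cdot,t)\|_{L^\infty(\RR)}\le\|u_0\|_{L^\infty(\RR)}$. Combining these, $\sup_{0\le t<T^\ast}\|u(\cdot,t)\|_{W^{1,\infty}(\RR)}<\infty$, so $T^\ast=\infty$; conservation of mass gives $\|\rho(\cdot,t)\|_{L^1}\equiv m$, hence $\rho\in L^\infty([0,\infty),L^1(\RR))$ (with support remaining bounded), and the bounds above give $u\in L^\infty([0,\infty),W^{1,\infty}(\RR))$.

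It remains to record flocking and the blow-up alternative. The solution just constructed is a global strong solution with $\rho_0$ compactly supported and $u_0\in L^\infty$, so Theorem~\ref{thm:flock} applies once \eqref{eq:phi1} is checked; but by the remark following Theorem~\ref{thm:flock} the slow-decay hypothesis \eqref{eq:phi2} makes \eqref{eq:phi1} automatic for the finite $S_0,V_0$ at hand, so $(\rho,u)$ converges to a flock in the sense of Definition~\ref{def:flock}, indeed with fast alignment. For the converse, take $\sigma_-$ to be the lower threshold $\sigma_-$ furnished by Theorem~\ref{thm:1DCS}: if $d_0<\sigma_-(V_0)$ then that theorem already produces a finite time $T_c$ with $\inf_{x\in supp(\rho(\cdot,t))}u_x(x,t)\to-\infty$ as $t\uparrow T_c$, which is the asserted finite-time breakdown (the sub-critical set in the first part being precisely the intersection of the configurations satisfying \eqref{eq:ThU} and \eqref{eq:vacuumCond}). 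The only genuine work beyond invoking the three theorems is the local well-posedness and continuation step together with the verification that the non-vacuum and vacuum thresholds are compatible and patch along $\partial\, supp(\rho)$; I expect this interface bookkeeping, rather than any new estimate, to be the main point requiring care.
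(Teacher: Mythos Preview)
Your proposal is correct and follows exactly the approach the paper intends: the paper states this result as a corollary without proof, treating it as an immediate consequence of combining Theorems~\ref{thm:1DCS} and~\ref{thm:vacuum} (for the $u_x$ bound on the non-vacuum and vacuum regions respectively) with Theorem~\ref{thm:flock} (for flocking), and the lower threshold of Theorem~\ref{thm:1DCS} for blow-up. You have supplied the standard local-existence/continuation and characteristic-invariance bookkeeping that the paper leaves implicit.
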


%%%%%%%%%%%%%%%%%%%%%
\subsection{Critical thresholds for two dimensional Cucker-Smale model}
%%%%%%%%%%%%%%%%%%%%%%
We extend our main result to two space dimensions, 
\begin{subequations}\label{eq:main_2DCS}
\begin{align}
\rho_t+\div(\rho\u)&=0,\label{eq:main_2DCSa}\\
\u_t+\u\cdot\grad\u&
=\int_{\RR^2}\phi(|\x-\y|)(\u(\y,t)-\u(\x,t))\rho(\y,t)d\y,\qquad \x\in\RR^2, t\geq 0,\label{eq:main_2DCSb}
\end{align}
\end{subequations}
where critical threshold is captured  in terms of 
 $\I$ and its initial divergence
$\displaystyle\J:=\inf_{\x\in supp(\rho_0)}\div\u_0(\x)$.
The main difficulty here is to control  the remaining  terms in 
$\grad\u$, beyond just $\div\u$ itself.  We measure the size of those  additional terms, setting  
\[\K:=\sup_{x\in
  supp(\rho_0)}\max\left\{2|\partial_{x_1}u_{02}|, 2|\partial_{x_2}u_{01}|,
|\partial_{x_1}u_{01}-\partial_{x_2}u_{02}|\right\},\]
and we prove that if $\K$ is sufficiently small then in fact all  terms of $\grad\u$, except for $\div \u$, remain equally small.

\begin{thm}[2D critical thresholds in non-vacuum region]\label{thm:2DCS}
Consider the two-dimensional CS system \eqref{eq:main_2DCS}.
There exist upper threshold functions $\ThU, \zeta$ such that,
if  $\J>\ThU(\I)$ and $\K<\zeta(\I)$ at $t=0$, 
then $\grad_\x\u(\x,t)$ remains bounded for all $(\x,t)\in supp (\rho)$.\newline
On the other hand, there exists a lower threshold function $\ThL$ such that,
if  $\J<\ThL(\I)$, $|\partial_{x_1}u_{02}|$ and
$|\partial_{x_2}u_{01}|$ are large enough at $t=0$,
then there exists a finite time blow-up at $T_c>0$ where  $\inf_{\x\in supp(\rho(\cdot,t))}\div\u(\x,t)\to-\infty$ as $t\rightarrow T_c-$.
\end{thm}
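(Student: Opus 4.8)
The plan is to differentiate the momentum equation \eqref{eq:main_2DCSb} along the flow characteristics $\dot\x=\u(\x,t)$ and to track the velocity gradient matrix $M:=\grad_\x\u$. Writing the right-hand side of \eqref{eq:main_2DCSb} as $F(\x,t)$, one has $\tfrac{d}{dt}\u=F$ along characteristics and, taking the spatial gradient,
\[
\frac{d}{dt}M+M^2+e(\x,t)\,M=R(\x,t),\qquad e(\x,t):=\int_{\RR^2}\phi(|\x-\y|)\rho(\y,t)\,d\y,
\]
where the nonlocal remainder $R_{ij}:=\int_{\RR^2}\partial_{x_j}\phi(|\x-\y|)\big(u_i(\y,t)-u_i(\x,t)\big)\rho(\y,t)\,d\y$ obeys $\|R(\cdot,t)\|_{L^\infty(supp\,\rho)}\lesssim\Lip{\phi}\,m\,\Vu(t)$. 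The whole argument is a continuation/bootstrap on the maximal interval $[0,T^*)$ of strong existence: on that interval theorem \ref{thm:flock} applies (condition \eqref{eq:phi2} forces a flock), so $supp(\rho(t))$ has diameter at most $D$, $\phi(D)m\le e(\x,t)\le m$ on the support, and $\Vu(t)\le\I e^{-m\phi(D)t}$, so $R$ decays exponentially. The goal is to show the resulting a priori bound on $M$ forces $T^*=+\infty$, and then invoke theorem \ref{thm:flock} once more.

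First I would split $M=\tfrac12 dI+N$ into its trace part $d=\div\u$ and its traceless part $N$. Cayley--Hamilton in two dimensions gives $N^2=-\det N\cdot I$, hence $M^2=(\tfrac14 d^2-\det N)I+dN$, and the matrix Riccati equation decouples into a scalar equation and a linear (in $N$) matrix equation:
\[
\frac{d}{dt}d+\tfrac12 d^2-2\det N+e\,d=\mathrm{tr}\,R,\qquad \frac{d}{dt}N+(d+e)N=\widetilde R,
\]
where $\widetilde R$ is the traceless part of $R$ and $|\det N|\le\tfrac12\|N\|^2$. The entries of $N$ are exactly the combinations $\partial_{x_1}u_1-\partial_{x_2}u_2$, $\partial_{x_2}u_1$, $\partial_{x_1}u_2$ measured by $\K$ (they also control the vorticity $\partial_{x_1}u_2-\partial_{x_2}u_1$), so $\|N(\cdot,0)\|_{L^\infty(supp\,\rho_0)}\lesssim\K$.

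The bootstrap then runs as follows. Fix a small $\epsilon=\epsilon(\I)$ and let $T^{**}\le T^*$ be the largest time for which $\|N(\cdot,t)\|\le\epsilon$ and $d(\cdot,t)$ stays above a lower fence $-m\phi(D)+\delta$ with $\delta=\delta(\I)>0$. On $[0,T^{**})$ the $N$-equation gives, entrywise, $\tfrac{d}{dt}\|N\|\le-(d+e)\|N\|+\|\widetilde R\|$; since $d+e\ge\delta>0$ there and $\|\widetilde R\|\lesssim\I e^{-m\phi(D)t}$, Gronwall keeps $\|N(t)\|<\epsilon$ provided $\K$ and $\I$ are small enough -- this fixes $\zeta(\I)$. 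With $\|N\|\le\epsilon$ the scalar equation becomes a perturbed Riccati inequality $\dot d\ge-\tfrac12 d^2-e\,d-C\epsilon^2-C\,\Lip{\phi}\,m\,\I e^{-m\phi(D)t}$, and comparison with the autonomous flow $\dot y=-\tfrac12 y^2-e\,y-(\mathrm{const}\cdot\I)$ -- whose solutions bounded below are exactly those starting above the unstable equilibrium near $-2e$ -- produces the threshold $\ThU(\I)$ (chosen conservatively, above $-m\phi(D)+\delta$): if $\J>\ThU(\I)$ then $d$ never reaches the fence. Hence $T^{**}=T^*$, $M=\tfrac12 dI+N$ is bounded on $[0,T^*)$, the continuation criterion gives $T^*=+\infty$, and flocking follows.

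For the blow-up direction I would use the scalar equation in the form $\dot d\le-\tfrac12 d^2+2\det N+m|d|+\|R\|$. Taking $|\partial_{x_1}u_{02}|$, $|\partial_{x_2}u_{01}|$ large with the sign making $(\partial_{x_1}u_2+\partial_{x_2}u_1)^2$ dominate the initial vorticity forces $\det N(\cdot,0)<0$ and large; moreover $\partial_{x_1}u_1-\partial_{x_2}u_2$, $\partial_{x_1}u_2+\partial_{x_2}u_1$ and the vorticity all obey the \emph{same} homogeneous linear ODE $\dot\xi+(d+e)\xi=(\text{exponentially small forcing})$, so the sign of $\det N$ persists on the short time scale before blow-up. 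Then $\dot d\le-\tfrac14 d^2+m|d|+C\I$, and for $\J<\ThL(\I)$ sufficiently negative this drives $\inf\div\u(\cdot,t)\to-\infty$ at a finite $T_c$ by ODE comparison. The main obstacle -- absent in the one-dimensional theorem \ref{thm:1DCS}, where the equation for $u_x$ essentially closes -- is precisely this coupling between $\div\u$ and the remaining entries $N$ of $\grad\u$ (including the vorticity): controlling $d$ needs $\|N\|$ small, while controlling $\|N\|$ needs $d$ bounded below, so the smallness of $\K$, the flock-induced exponential decay of $R$, and the thresholds $\ThU(\I),\zeta(\I)$ must all be threaded through one two-sided continuity argument; a secondary technical point, for the blow-up part only, is checking that the favorable sign of $\det N$ (positivity of the discriminant of $M$) is not destroyed by the nonlocal dynamics before $\div\u$ blows up.
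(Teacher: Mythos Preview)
Your proposal is correct and follows essentially the same route as the paper: the paper also derives the matrix Riccati equation $M'+M^2+pM=Q$ along characteristics, splits off $d=\mathrm{tr}\,M$ from the remaining entries $(q,r,s)=(M_{11}-M_{22},M_{12},M_{21})$, and runs the same two-sided bootstrap (their Lemma~\ref{lem:gap}/Proposition~\ref{prop:2D} and the fast-alignment refinement in Lemma~\ref{lem:gap2}/Proposition~\ref{prop:IJ}) between ``$d$ bounded below'' and ``$(q,r,s)$ small''. Your traceless--trace packaging is equivalent to theirs via $\eta^2=q^2+4rs=-4\det N$, so your $-2\det N$ is exactly their spectral-gap term $\eta^2/2$ in \eqref{eq:2Dd}; the only cosmetic difference is that you fold in the fast-alignment decay of $R$ from the outset, whereas the paper first proves a cruder threshold (Theorem~\ref{thm:2DNF}) and then sharpens it in \S\ref{sec:fast}.
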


\begin{rem}
The smallness assumption on $B_0$ guarantees that the terms in $\grad\u_0$ remain small  relative to $\div\u(\cdot,t)$. Put differently, theorem \ref{thm:2DCS} states that if the \emph{vorticity}, $\omega_0$ and the \emph{spectral gap}, $\eta_0$, are
small enough at $t=0$, then they will remain small for all time (the result has the same flavor of the critical threshold in the two-dimensional \emph{restricted} Euler-Poisson equations expressed in terms of $(\omega_0,\eta_0)$,  \cite{LT3}). Consult theorem \ref{thm:2D} for all details. 
\end{rem}

\ifx%%%%%%%%%%%%%%%%%%%%%%%%%%
\begin{rem}
For higher dimensions, $\grad_\x\u$ consists too many extra terms. The dynamics
become too complicated, and we are not able to control all terms. The
strategy stops working.
\end{rem}
\fi%%%%%%%%%%%%%%%%%%%%%%%%%%%%%%%%

Theorem \ref{thm:2DCS} is restricted to the non-vacuum part of the dynamics. For  vacuum, it is easy to derive a result analogous to the one-dimensional setup in theorem \ref{thm:vacuum}. We omit the details.

%%%%%%%%%%%%%%%%%%%%%%%%%%
\subsection{Critical thresholds for Motsch-Tadmor system}
%%%%%%%%%%%%%%%%%%%%%%%%%%%
We extend the main results to macroscopic Motsch-Tadmor system,  $\x\in\RR^n, t\geq 0$ in $n=1,2$ spatial dimensions,
\begin{subequations}\label{eq:MT}
\begin{align}
\rho_t+\div(\rho \u)&=0,\label{eq:MTrho1D}\\
\u_t+\u \cdot\nabla\u&
=\int_{\RR}\frac{\phi(|\x-\y|)}{\Phi(\x,t)}(u(\y,t)-u(\x,t))\rho(\y,t)d\y,
\quad \Phi(\x,t):=\int_{\RR}\phi(|\x-\y|)\rho(\y,t)d\y,
\label{eq:MTu}
\end{align}
\end{subequations}
subject to the same initial condition \eqref{eq:IC}.
 A main difference from the Cucker-Smale system is the lack of  conservation of momentum. As an example, we prove that the analogue of CS non-vacuum threshold dynamics  in one- and two-dimensions hold for \eqref{eq:MT} (albeit with  a different choice of critical $\sigma_\pm$).

\ifx%%%%%%%%%%%%%%%%%%%%%%%%%%%%%%%
\begin{thm}[Flock for Motsch-Tadmor system]\label{thm:flockMT}
Let $(\rho,\u)$ be a global strong solution of system \eqref{eq:MT}.
with non-local alignment \eqref{eq:phi1}.
Then, $(\rho,\u)$ converges to a flock with fast alignment, namely,
there exists a finite number $D$, such that
\[\sup_{t\geq 0}S(t)\leq D,\quad V(t)\leq V_0 e^{-\phi(D)t}.\]
\end{thm}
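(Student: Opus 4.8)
The plan is to carry the characteristic-plus-Lyapunov argument familiar from the agent-based Motsch--Tadmor model over to the Eulerian setting, the crucial structural input being the normalization $\int_{\RR^n}\frac{\phi(|\x-\y|)}{\Phi(\x,t)}\rho(\y,t)\,d\y\equiv 1$ built into \eqref{eq:MTu}. First I would introduce the characteristics $\dot X(t;\alpha)=\u(X(t;\alpha),t)$, $X(0;\alpha)=\alpha$; these are well defined because $\u(\cdot,t)$ is Lipschitz for a strong solution, and $supp(\rho(t))=X\big(t;supp(\rho_0)\big)$ by the continuity equation \eqref{eq:MTrho1D}, so the spatial diameter $S(t)$ is finite and locally Lipschitz in $t$. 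Comparing two characteristics gives $\frac{d}{dt}|X(t;\alpha)-X(t;\beta)|\le|\u(X(t;\alpha),t)-\u(X(t;\beta),t)|\le V(t)$, hence $\dot S(t)\le V(t)$. Along a characteristic the velocity equation reads $\frac{d}{dt}\u(X(t;\alpha),t)=\langle\u\rangle_{X(t;\alpha)}-\u(X(t;\alpha),t)$, where $\langle\u\rangle_\x:=\int w_\x(\z)\u(\z,t)\,d\z$ is a weighted average of $\u$ over $supp(\rho(t))$ with weight $w_\x(\z):=\frac{\phi(|\x-\z|)}{\Phi(\x,t)}\rho(\z,t)\ge 0$, $\int w_\x\equiv 1$.

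The heart of the proof is the differential inequality $\dot V(t)\le-\phi(S(t))\,V(t)$. At a time $t$ choose $\x^\ast,\y^\ast\in supp(\rho(t))$ realizing the diameter $V(t)$ and set $\hat e:=\big(\u(\x^\ast,t)-\u(\y^\ast,t)\big)/V(t)$; maximality forces $\langle\u(\z,t)-\u(\x^\ast,t),\hat e\rangle\le 0$ and $\langle\u(\z,t)-\u(\y^\ast,t),\hat e\rangle\ge 0$ for every $\z\in supp(\rho(t))$, and a maximum-selection (Danskin-type) argument gives $\dot V(t)\le\big\langle\langle\u\rangle_{\x^\ast}-\langle\u\rangle_{\y^\ast},\hat e\big\rangle-V(t)$. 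Now exploit non-locality: since $|\x-\z|\le S(t)$ on the support and $\Phi(\x,t)=\int\phi(|\x-\cdot|)\rho\le\int\rho=m$, the weight dominates the \emph{same} $\x$-independent floor, $w_\x(\z)\ge w_0(\z):=\frac{\phi(S(t))}{m}\rho(\z,t)$, with $\int w_0=\phi(S(t))\le 1$. Writing $w_{\x^\ast}=w_0+\tilde w_{\x^\ast}$ and $w_{\y^\ast}=w_0+\tilde w_{\y^\ast}$ with $\tilde w\ge 0$, $\int\tilde w=1-\phi(S(t))$, and bounding the $\tilde w$-contributions above, resp. below, by the extremal values $\langle\u(\x^\ast),\hat e\rangle$, resp. $\langle\u(\y^\ast),\hat e\rangle$, the common $w_0$-terms cancel and one obtains $\big\langle\langle\u\rangle_{\x^\ast}-\langle\u\rangle_{\y^\ast},\hat e\big\rangle\le(1-\phi(S(t)))V(t)$, hence $\dot V(t)\le-\phi(S(t))V(t)$.

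It remains to close the pair $\dot S\le V$, $\dot V\le-\phi(S)V$ by the invariant-region method. Since $\phi\ge 0$ is non-increasing, $\frac{d}{dt}\Big(V(t)+\int_{S_0}^{S(t)}\phi(r)\,dr\Big)=\dot V(t)+\phi(S(t))\dot S(t)\le-\phi(S(t))V(t)+\phi(S(t))V(t)=0$, so $\int_{S_0}^{S(t)}\phi(r)\,dr\le V_0$ for all $t$. Because $\phi>0$ (forced by the divergent-tail hypothesis \eqref{eq:phi2}, equivalently \eqref{eq:phi1} with $m=1$), the map $s\mapsto\int_{S_0}^s\phi$ is a strictly increasing bijection of $[S_0,\infty)$ onto $[0,\infty)$, so there is a unique $D>S_0$ with $\int_{S_0}^D\phi(r)\,dr=V_0$ and $S(t)\le D$ for all $t$; then $\phi(S(t))\ge\phi(D)$ yields $V(t)\le V_0 e^{-\phi(D)t}$. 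Together with $\sup_{t\ge 0}S(t)\le D$ this is precisely the asserted flock with fast alignment.

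The step I expect to be delicate is the middle one: making rigorous both the differentiation of the diameter $V(t)$ along the time-dependent (possibly non-unique) extremal pair of characteristics — for which one works with the upper Dini derivative and checks the contraction bound uniformly over all maximizing pairs — and the weight-splitting that converts the MT normalization $\int w_\x\equiv 1$ together with the pointwise bound $\phi(|\x-\y|)\ge\phi(S(t))$ into a genuine contraction rate $(1-\phi(S))$, the cancellation of the $\x$-independent floor $w_0$ being what makes the estimate clean. Once this is in place the remainder is routine ODE comparison; indeed the whole argument is just the $m=1$ specialization of the framework behind Theorem \ref{thm:flock}, which already treats the CS and MT models simultaneously through the interaction bound $\int a(\x,\y)\rho(\y)\,d\y\equiv 1$.
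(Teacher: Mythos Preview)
Your proposal is correct and follows essentially the same route as the paper: the paper proves this statement as the $m=1$ case of Theorem~\ref{thm:flock}, establishing the pair of inequalities $\dot S\le V$, $\dot V\le -\phi(S)V$ in Proposition~\ref{prop:SDDI} and then closing with the Lyapunov functional $\mathcal{E}=V+\int_0^S\phi$. The only cosmetic difference is in the weight-splitting step: the paper subtracts $\eta(\z):=\min\{w_{\x^\ast}(\z),w_{\y^\ast}(\z)\}$ from both weights, whereas you subtract the slightly smaller $\x$-independent floor $w_0(\z)=\frac{\phi(S)}{m}\rho(\z)$; both residual weights carry mass $\le 1-\phi(S)$ and yield the identical contraction rate.
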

\fi%%%%%%%%%%%%%%%%%%%%%%%%%%%

\begin{thm}[Critical thresholds for Motsch-Tadmor system]\label{thm:mainMT}
Consider the MT hydrodynamics  \eqref{eq:MT}. There exists
threshold functions $\ThU>\ThL$ and $\zeta$, such that the conclusions 
of theorem \ref{thm:1DCS} (in $n=1$ dimension) and theorem \ref{thm:2DCS} (in $n=2$ dimensions) hold.
\end{thm}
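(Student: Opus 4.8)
The plan is to reduce the analysis of the Motsch--Tadmor (MT) system \eqref{eq:MT} to the Cucker--Smale (CS) framework already developed in theorems \ref{thm:1DCS} and \ref{thm:2DCS}, exploiting the structural similarity of the two equations. The crucial observation is that along characteristics $\dot{\x}=\u$, the velocity gradient in the MT model satisfies a Riccati-type equation of the same form as in CS, but with the interaction weight $\phi(|\x-\y|)$ replaced by the normalized kernel $\phi(|\x-\y|)/\Phi(\x,t)$, whose total mass against $\rho$ is $\equiv 1$ rather than $\leq m$. So first I would differentiate \eqref{eq:MTu} along characteristics and record the evolution of $u_x$ (in $n=1$) and of $\grad_\x\u$ (in $n=2$, tracking $\div\u$ together with the auxiliary quantities $2\partial_{x_1}u_2$, $2\partial_{x_2}u_1$, $\partial_{x_1}u_1-\partial_{x_2}u_2$ measured by $\K$, exactly as in theorem \ref{thm:2DCS}), isolating the ``good'' damping term and the ``commutator'' remainder that arises because the kernel now depends on $\x$ through $\Phi$.

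Second, I would establish the a priori bounds needed to close the estimate. Theorem \ref{thm:flock} already gives, for strong solutions, a uniform spatial diameter $S(t)\le D$ and exponential velocity-diameter decay $V(t)\le V_0 e^{-\phi(D)t}$ for the MT model with $m=1$; consequently $\Phi(\x,t)=\int\phi(|\x-\y|)\rho(\y,t)\,d\y$ is bounded \emph{below} by $\phi(D)\int\rho_0$ and above by $\int\rho_0$ uniformly on $supp(\rho)$, which means the normalized kernel is comparable to $\phi(|\x-\y|)$ up to fixed constants. The gradient $\grad_\x\Phi$ can be controlled by $|\phi'|$ and the mass, again uniformly in time. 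Feeding these two-sided bounds into the Riccati equation shows that the damping coefficient is bounded below by a positive constant (of size $\sim\phi(D)$) and the remainder term is controlled by $V(t)\to 0$, so the forcing in the differential inequality for $u_x$ is integrable in time. This is precisely the situation handled in sections \ref{sec:nonvacuum} and \ref{sec:fast}, and it produces subcritical threshold functions $\ThUL$ (and $\zeta$ in $n=2$) for which $\grad_\x\u$ stays bounded; the explicit constants differ from the CS case only through the $\phi(D)$-factors introduced by the normalization.

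Third, for the blow-up half of the statement I would run the matching lower-threshold argument: along the characteristic realizing the infimal $u_x$ (resp.\ $\div\u$), the normalized damping is \emph{bounded above} by $1$ (by construction of the MT kernel), so if $d_0$ (resp.\ $\J$) is sufficiently negative relative to $\I$ — and, in $n=2$, if $|\partial_{x_1}u_2|$, $|\partial_{x_2}u_1|$ are large enough so that the auxiliary quantities feed a genuinely negative contribution into the $\div\u$ equation — the Riccati inequality $\frac{d}{dt}(u_x)\le -(u_x)^2 + (\text{bounded})$ forces $u_x\to-\infty$ in finite time. This gives $\ThL$. The main obstacle, as in theorem \ref{thm:2DCS}, is the two-dimensional case: one must verify that the smallness of $\K$ is propagated, i.e.\ that the off-diagonal and trace-free parts of $\grad_\x\u$ remain small relative to $\div\u$, and the extra $\x$-dependence of the MT kernel introduces additional commutator terms in their evolution that were absent in CS. Controlling these terms — showing they are still dominated by the product of the (small) $\K(t)$ and the (bounded, decaying) velocity diameter, so that the coupled system of differential inequalities closes — is the delicate point; everything else is a routine adaptation of the CS computations with $\phi$ replaced by $\phi/\Phi$ and $m$ replaced by $1$.
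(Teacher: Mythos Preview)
Your proposal is correct and follows essentially the same route as the paper: write the evolution of $M=\grad_\x\u$ along characteristics, use the flocking bound $\Vx(t)\le D$ to control $\Phi$ and $\grad_\x\Phi$, and thereby fit the MT equation into the same majorant framework \eqref{eq:Mdv} used for CS, after which the one- and two-dimensional threshold arguments carry over verbatim with modified constants. One small sharpening: because $\int\frac{\phi(|\x-\y|)}{\Phi(\x,t)}\rho(\y)\,d\y\equiv 1$, the damping coefficient in the MT Riccati equation is \emph{exactly} $p\equiv 1$ (i.e.\ $\gamma=\Gamma=1$), not merely bounded below by $\sim\phi(D)$ as you wrote; the $\phi(D)$-factor enters only in the forcing bound $c=2\Lip{\phi}/\phi(D)$ and in the alignment rate $G=\phi(D)$, which also means the 2D ``extra commutator'' terms you flagged are no worse than in CS and require no new idea.
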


%%%%%%%%%%%%%%%%%%%%%%%%%%%%%%%%%%%%%%%
\section{Strong solutions must flock}\label{sec:flock}
%%%%%%%%%%%%%%%%%%%%%%%%%%%%%%%%%%%%%%%
In this section, we prove theorem \ref{thm:flock}: any global strong solution
of \eqref{eq:main} converges to a flock with fast alignment.
The key idea following \cite{MT2}, is to measure the decay of $\Vx(t)$
and $\Vu(t)$ dynamically, .
\begin{prop}[Decay estimates towards flocking]\label{prop:SDDI}
A strong solution $(\rho,\u)$ of the CS or MT models \eqref{eq:main} satisfies 
\begin{subequations}\label{eq:flock}
\begin{align}
\frac{d}{dt}\Vx(t)&\leq \Vu(t),\label{eq:dX}\\
\frac{d}{dt}\Vu(t)&\leq -m\phi(\Vx(t))\Vu(t).\label{eq:dV}
\end{align}
\end{subequations}
\end{prop}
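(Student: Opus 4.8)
To prove Proposition~\ref{prop:SDDI}, the plan is to propagate the two diameters along the characteristic flow and then differentiate the resulting maxima. I would first introduce the characteristics $\dot\x(t;\x_0)=\u(\x(t;\x_0),t)$, $\x(0;\x_0)=\x_0$; since $\u(\cdot,t)$ is Lipschitz for a strong solution, this flow is a homeomorphism of $\RR^n$, the characteristics do not cross, and the continuity equation \eqref{eq:rho} transports the support, $supp(\rho(t))=\x\bigl(t;supp(\rho_0)\bigr)$, which stays compact. Writing $K:=supp(\rho_0)$, both $\Vx(t)$ and $\Vu(t)$ are therefore maxima over the compact set $K\times K$ of families of functions that are $C^1$ in $t$, hence locally Lipschitz; I would bound their upper right Dini derivatives by the Danskin-type rule that maximizes the $t$-derivative over the set of extremizing pairs. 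Estimate \eqref{eq:dX} is then immediate: along two characteristics, Cauchy--Schwarz gives $\frac{d}{dt}|\x(t;\x_0)-\x(t;\y_0)|\le|\u(\x(t;\x_0),t)-\u(\x(t;\y_0),t)|\le\Vu(t)$, and maximizing over extremizers yields $\frac{d}{dt}\Vx(t)\le\Vu(t)$.

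The substance is \eqref{eq:dV}. Fix a time $t$; the degenerate case $\Vu(t)=0$ is trivial since the alignment term then vanishes along every characteristic, so assume $\Vu(t)>0$ and pick an extremizing pair $\x^*,\y^*\in supp(\rho(t))$ together with the unit vector $\mathbf{e}:=(\u(\x^*,t)-\u(\y^*,t))/\Vu(t)$. A one-line argument shows that $\x^*$ maximizes and $\y^*$ minimizes $\z\mapsto\mathbf{e}\cdot\u(\z,t)$ over $supp(\rho(t))$: if some $\z_0$ had $\mathbf{e}\cdot\u(\z_0,t)>\mathbf{e}\cdot\u(\x^*,t)$, the pair $(\z_0,\y^*)$ would have velocity gap at least $\mathbf{e}\cdot(\u(\z_0,t)-\u(\y^*,t))>\Vu(t)$, contradicting maximality. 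I would then consider the velocity gap $g$ between the two characteristics through $\x^*$ and $\y^*$; since $g(t)=\Vu(t)>0$, the quantity inside the modulus is $C^1$ and nonzero at $t$, so $g$ is differentiable there with
\[ g'(t)=\mathbf{e}\cdot\!\!\int a(\x^*,\z)\bigl(\u(\z,t)-\u(\x^*,t)\bigr)\rho(\z,t)\,d\z-\mathbf{e}\cdot\!\!\int a(\y^*,\z)\bigl(\u(\z,t)-\u(\y^*,t)\bigr)\rho(\z,t)\,d\z. \]
By the extremality just noted, $\mathbf{e}\cdot(\u(\z,t)-\u(\x^*,t))\le0\le\mathbf{e}\cdot(\u(\z,t)-\u(\y^*,t))$ on $supp(\rho(t))$, and since $|\x^*-\z|,|\y^*-\z|\le\Vx(t)$ with $\phi$ non-increasing, the kernel obeys there the lower bound $a(\cdot,\z)\ge\phi(\Vx(t))$ for the CS model, and $a(\cdot,\z)\ge\phi(\Vx(t))/\int\rho\,d\z$ for the MT model (using $\Phi=\int\phi\rho\le\int\rho$). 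Because the bracketed factors have definite signs, I may replace each kernel by this constant lower bound; the two integrals of $\mathbf{e}\cdot\u(\z,t)\rho(\z,t)$ then cancel and only $\bigl(\mathbf{e}\cdot\u(\y^*,t)-\mathbf{e}\cdot\u(\x^*,t)\bigr)\int\rho\,d\z=-\Vu(t)\int\rho\,d\z$ remains. Since the factored constant multiplied by $\int\rho\,d\z$ equals $m$ in both cases — $\int\rho_0$ (by mass conservation) for CS, and $1$ for MT — this gives $g'(t)\le-m\,\phi(\Vx(t))\,\Vu(t)$, and maximizing over extremizing pairs yields \eqref{eq:dV}.

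The one ingredient I expect to need genuine care is the rigorous differentiation of the maxima $\Vx(t)$ and $\Vu(t)$ — justifying the Danskin-type reduction to extremizing pairs and handling the non-smoothness of $|\cdot|$ where $\Vu$ vanishes — since the remaining algebra rests only on the monotonicity of $\phi$ and on the cancellation of the mass-weighted mean velocity.
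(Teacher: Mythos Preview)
Your proof is correct and your approach is genuinely more elementary than the paper's. For \eqref{eq:dV}, the paper follows the technique of \cite{MT2}: it introduces a normalized kernel $b(\x,\y)=\frac{1}{m}a(\x,\y)\rho(\y)+\bigl(1-\frac{1}{m}\int a(\x,\z)\rho(\z)d\z\bigr)\delta_0(\x-\y)$ so that $\int b(\x,\cdot)d\y=1$, then sets $\eta(\z)=\min\{b(X,\z),b(Y,\z)\}$ and, after cancelling the common ``overlap'' part, bounds the remainder by the diameter using that $c:=b(Y,\cdot)-\eta$ and $d:=b(X,\cdot)-\eta$ are nonnegative with equal masses $1-\int\eta$. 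This yields $\frac{d}{dt}\Vu^2\le-2m\bigl(\int\eta\bigr)\Vu^2$, and the lower bound $\int\eta\ge\phi(\Vx)$ finishes. Your argument bypasses the $b$-normalization entirely: by observing that the extremizing points $\x^*,\y^*$ are the maximizer and minimizer of $\mathbf{e}\cdot\u$, the two integrands acquire definite signs, so the kernels may be replaced by the \emph{same} uniform lower bound, and the $\int\mathbf{e}\cdot\u(\z)\rho(\z)d\z$ terms cancel directly. What the paper's route buys is robustness---the min-overlap device works whenever $\int\min\{b(X,\cdot),b(Y,\cdot)\}>0$, with no need for a common pointwise lower bound on $a(\x^*,\cdot)$ and $a(\y^*,\cdot)$---while your route buys transparency: for the CS and MT kernels considered here, where $a(\cdot,\z)\ge\phi(\Vx)$ (resp.\ $\phi(\Vx)/\int\rho$) uniformly on $supp(\rho)$, the cancellation is immediate and no auxiliary kernel is needed.
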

\begin{proof}
Consider two characteristics $\dot{X}(t)=\u(X,t)$,
$\dot{Y}(t)=\u(Y,t)$ subject to initial conditions $X(0)=\x$,
$Y(0)=\y$, where $\x,\y\in supp(\rho_0)$.
To simplify the notations, we omit the time variable throughout the proof.
First, we compute
$\displaystyle \frac{d}{dt}|Y-X|^2=2\langle Y-X,\u(Y)-\u(X)\rangle\leq 2\Vx\Vu$.
Taking the supreme of the left hand side for all $\x,\y\in
supp(\rho_0)$, yields the inequality  \eqref{eq:dX}.

Next, define
\[
b(\x,\y):=\frac{1}{m}a(\x,\y)\rho(\y)
+\left(1-\frac{1}{m}\int_{\RR^n}a(\x,\y)\rho(\y)d\y\right)\delta_0(\x-\y),
\quad a(\x,\y)= \frac{\phi(|\x-\y|}{\Phi(\x,t)};
\]
note that since  MT model has the ``correct'' scaling (with $m=1$) then  the second term involving Dirac mass  $\delta_0$ drops out. We leave it to the reader to verify that $b(\cdot,\cdot)$ satisfies the two properties:
\begin{itemize}
\item[(P1)] $\displaystyle\int_{\RR^n}b(\x,\y)
d\y=1$, for all $\x\in supp(\rho)$.
\item[(P2)] $\displaystyle\int_{\RR^n}b(\x,\y)(\u(\y)-\u(\x))d\y=
\frac{1}{m}\int_{\RR^n}a(\x,\y)(\u(\y)-\u(\x))\rho(\y)d\y$.
\end{itemize}

To prove \eqref{eq:dV}, we use  the momentum equation (\ref{eq:u}) 
\[
 \frac{d}{dt}|\u(Y)-\u(X)|^2=2\left\langle\u(Y)-\u(X),
\int_{\RR^n}\left[a(Y,\z)(\u(\z)-\u(Y))-a(X,\z)(\u(\z)-\u(X))\right]
\rho(\z)d\z\right\rangle.
\]
We expand  the second expression on the right in terms of 
$\eta_{X,Y}(\z)=\eta(\z):=\min\{b(X,\z),b(Y,\z)\}$:
\begin{align*}
&\int_{\RR^n}\left[a(Y,\z)(\u(\z)-\u(Y))-a(X,\z)(\u(\z)-\u(X))\right]
\rho(\z)d\z\\
&~\stackrel{(P2)}{=}~m\int_{\RR^n}\left[b(Y,\z)(\u(\z)-\u(Y))-b(X,\z)(\u(\z)-\u(X))\right]
d\z\\
&~\stackrel{(P1)}{=}~m\int_{\RR^n}\left(b(Y,\z)-b(X,\z)\right)\u(\z)
d\z-m(\u(Y)-\u(X))\\
&~ ~=~m\int_{\RR^n}(b(Y,\z)-\eta(\z))\u(\z)d\z
-m\int_{\RR^n}(b(X,\z)-\eta(\z))\u(\z)d\z-m(\u(Y)-\u(X)).
\end{align*}
Set $c(\z):=b(Y,\z)-\eta(\z)\geq0$ and $d(\z):=b(X,\z)-\eta(\z)\geq0$;  we find
\begin{align*}
\frac{d}{dt}|\u(Y)-\u(X)|^2  \leq & 2m\int_{\RR^n} c(\z)d\z \times \max_{\z}\left\langle\u(Y)-\u(X),\u(\z)\right\rangle\\
&-2m\int_{\RR^n} d(\z)d\z \times \min_{\z}\left\langle\u(Y)-\u(X),\u(\z)\right\rangle
 -2m|\u(Y)-\u(X)|^2,
\end{align*}
and since by (P1), $\displaystyle \int c(\z)d\z=\int d(\z)d\z =1-\int \eta(\z)d\z$, we end up with
\begin{align*}
\frac{d}{dt}|\u(Y)-\u(X)|^2\leq 2m\left(1-\int \eta(\z)d\z\right)\max_{\x,\y} |\u(\y)-\u(\x)|^2 -2m|\u(Y)-\u(X)|^2.
\end{align*}
Since the support of $\rho$ is compact, we can take the two maximal
characteristics $Y$ and $X$ which realize the diameter $\Vu=|\u(Y)-\u(X)|$. We conclude the decay estimate
\[
\frac{d}{dt}\Vu^2\leq -2m\left(\int \eta(\z)d\z\right)\Vu^2, \qquad
\eta(\z)=\min\{b(X,\z),b(Y,\z)\}.
\]
At the heart of mater is the decay factor $\int \eta(\z)d\z$:  we compute its lower bound for CS model 
\[
\int_{\RR^n} \min\{b(X,\z),b(Y,\z)\}d\z \geq \frac{1}{m}\phi(S) \int_{\RR^n}\rho(\z,t)d\z= \phi(S), \qquad X,Y\in supp(\rho);
\]
similarly, for the MT model (where $m=1$ and $\Phi(X) \leq \int \rho(\z)d\z$) we have
\[
\int_{\RR^n} \min\{b(X,\z),b(Y,\z)\}d\z \geq 
 \frac{\phi(S)}{\max_{X,Y}\{\Phi(X),\Phi(Y)\}}\int_{\RR^n} \rho(\z)d\z \geq \phi(S),\qquad X,Y\in supp(\rho).
\]
The result (\ref{eq:dV}) follows from the last three bounds.
\end{proof}

Equipped with the decay estimates (\ref{eq:flock}, we use the technique introduced in \cite{HL} to prove the flocking behavior of (\ref{eq:main}). 
\begin{proof}[Proof of Theorem \ref{thm:flock}]
Consider free energy
$\displaystyle \mathcal{E}:=\Vu+m\int_0^{\Vx}\phi(s)ds$. The decay estimates \eqref{eq:flock} imply
$\frac{d}{dt}\mathcal{E}\leq0$ and hence 
$\Vu(t)-\Vu_0\leq-m\int_{\Vx_0}^{\Vx(t)}\phi(s)ds$.
By assumption \eqref{eq:phi1},  there exists a finite number
$D$ (depending  on $\phi, \rho_0, \u_0$), such that
\begin{equation}\label{eq:D}
D:=\psi^{-1}\big(\Vu_0+\psi(\Vx_0)\big),~~
\text{where}~~\psi(t)=m\int_0^t\phi(s)ds,
\end{equation}
for which $\displaystyle \Vu_0=m\int_{\Vx_0}^{D}\phi(s)ds$. Hence, we have
$\displaystyle 0\leq \Vu(t)\leq m\int_{\Vx(t)}^{D}\phi(s)ds$.
In particular, it yields that $\Vx(t)\leq D<\infty$, and since $\phi$ is monotonically decreasing, \eqref{eq:dV} yields
\[\frac{d}{dt}\Vu(t)\leq -m\phi(D)\Vu(t)
\quad\leadsto\quad
\Vu(t)\leq \Vu_0e^{-m\phi(D)t}\to 0, ~~\text{as}~t\to+\infty.\]
We conclude that $(\rho,\u)$ converges to a flock with fast alignment.
\end{proof}

%%%%%%%%%%%%%%%%%%%%%%%%%%%%%%%%%%%%%%%
\section{Strong solutions exist for sub-critical non-vacuum initial data}\label{sec:nonvacuum}
%%%%%%%%%%%%%%%%%%%%%%%%%%%%%%%%%%%%%%%

\subsection{General considerations}
In this section, we discuss the existence of global
strong solutions of the alignment system \eqref{eq:main}. The goal is
to control $\|\grad_\x\u(\cdot,t)\|_{L^\infty}$ for all time.

Let $M=\grad_\x\u$ be the gradient velocity matrix.
Apply gradient operator on both sides of
\eqref{eq:main_2DCSb}, to get
\[M_t+\u\cdot\grad_\x M+M^2=\int_{\RR^n}\grad_\x \phi(|\x-\y|)\otimes
(\u(\y,t)-\u(\x,t))\rho(\y,t)d\y -
M\int_{\RR^n}\phi(|\x-\y|)\rho(\y,t)d\y.\]
Let $~'=\partial_t+\u\cdot\grad_\x$ denote differentiation along the
particle path $\dot{\x}=\u(\x,t)$, then the above system reads
\begin{equation}\label{eq:Mfull}
M'+M^2=\int_{\RR^n}\grad_\x
\phi(|\x-\y|)\otimes(\u(\y,t)-\u(\x,t))\rho(\y,t)d\y -
M\int_{\RR^n}\phi(|\x-\y|)\rho(\y,t)d\y,
\end{equation}
 subject to initial data
$M(\x,0)=\grad\u_0(\x)$.

Instead of working with the specific system \eqref{eq:Mfull} directly, we
consider the following \emph{majorant system},
\begin{equation}\label{eq:M}
M'=-M^2-pM+Q,\quad\text{where }
0<\gamma\leq p\leq\Gamma\ \text{ and } \ |Q_{ij}|\leq \c, i,j=1,\cdots,n.
\end{equation}
Here, $p(\cdot,t)$ and the matrix $Q(\cdot,t)$ are uniformly bounded in terms of the constants  $\gamma, \Gamma$ and $\pm\c$.
As an example,   proposition \ref{prop:fit} below shows that system
\eqref{eq:Mfull} admits a majorant of the type  \eqref{eq:M}, with
\begin{equation}\label{eq:consts}
\gamma=\phi(D)m,~~~ \Gamma=m,~~~\c=\I\Lip{\phi}m.
\end{equation}
\begin{prop}\label{prop:fit}
Suppose $(\rho,\u)$ is a solution of the CS system
\eqref{eq:main_2DCS}. Then, for any $\x\in supp(\rho(t))$,
\begin{eqnarray*}
&\displaystyle\left|\int_{\RR^n}\partial_{x_j}
\phi(|\x-\y|)(u_i(\y,t)-u_i(\x,t))\rho(\y,t)d\y\right|\leq \I\Lip{\phi}m,\quad
i,j=1,\cdots,n,\\
&\displaystyle
\phi(D)m\leq\int_{\RR^n}\phi(|\x-\y|)\rho(\y,t)d\y\leq m.
\end{eqnarray*}
\end{prop}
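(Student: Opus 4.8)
The plan is to prove the two bounds separately, in each case exploiting only the flocking estimates from Theorem \ref{thm:flock} together with the elementary properties of $\phi$ ($\phi$ non-increasing, Lipschitz, $\phi(0)=1$).

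For the second (two-sided) bound on $\int_{\RR^n}\phi(|\x-\y|)\rho(\y,t)d\y$, the upper bound is immediate: $\phi \leq \phi(0) = 1$ gives $\int \phi(|\x-\y|)\rho(\y,t)d\y \leq \int \rho(\y,t)d\y = m$, where we have used conservation of mass for \eqref{eq:main_2DCSa}. For the lower bound, fix $\x \in supp(\rho(t))$. For any $\y \in supp(\rho(t))$ we have $|\x-\y| \leq S(t) \leq D$ by Theorem \ref{thm:flock}, and since $\phi$ is non-increasing, $\phi(|\x-\y|) \geq \phi(D)$. Integrating against $\rho(\y,t)d\y$ (supported where the inequality holds) yields $\int \phi(|\x-\y|)\rho(\y,t)d\y \geq \phi(D)\int \rho(\y,t)d\y = \phi(D)m$.

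For the first bound, fix $\x \in supp(\rho(t))$ and estimate, for each pair of indices $i,j$,
\[
\left|\int_{\RR^n}\partial_{x_j}\phi(|\x-\y|)\big(u_i(\y,t)-u_i(\x,t)\big)\rho(\y,t)d\y\right|
\leq \int_{\RR^n}\big|\grad_\x\phi(|\x-\y|)\big|\,\big|\u(\y,t)-\u(\x,t)\big|\,\rho(\y,t)d\y.
\]
The integrand is supported on $supp(\rho(t))$, where two facts apply: first, $|\u(\y,t)-\u(\x,t)| \leq V(t) \leq V_0$ by the flocking estimate (monotone decay of $V$); second, $|\grad_\x\phi(|\x-\y|)| = |\phi'(|\x-\y|)| \leq \Lip{\phi}$ since $\phi$ is Lipschitz (with the convention that at $|\x-\y|=0$ the bound still holds by the a.e.\ sense of the Lipschitz seminorm). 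Combining these and again using conservation of mass gives the bound $V_0 \Lip{\phi}\, m$, as claimed.

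The only mild subtlety — and the step I would flag as needing a word of care rather than real difficulty — is the pointwise control $|\grad_\x \phi(|\x-\y|)| \le \Lip{\phi}$ at the diagonal $\x=\y$: the chain rule gives $\grad_\x\phi(|\x-\y|) = \phi'(|\x-\y|)\frac{\x-\y}{|\x-\y|}$, which is a product of something bounded by $\Lip{\phi}$ with a unit vector, so the bound persists away from $\x=\y$, and the single point $\x=\y$ is $\rho(\y,t)d\y$-negligible (or one notes $\phi(|\cdot|)$ is globally Lipschitz on $\RR^n$ with the same seminorm when $\phi(0)$ is a local max, which holds here since $\phi$ is non-increasing). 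Everything else is a direct substitution of the flocking bounds $S(t)\le D$ and $V(t)\le V_0$ into the integrals, so no genuine obstacle is expected.
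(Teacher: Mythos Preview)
Your proof is correct and follows essentially the same approach as the paper: both arguments use conservation of mass, the Lipschitz bound $|\partial_{x_j}\phi|\le\Lip{\phi}$, the monotonicity $V(t)\le V_0$, and the flocking bound $S(t)\le D$ with $\phi$ non-increasing to obtain the lower estimate $\phi(|\x-\y|)\ge\phi(D)$ on the support. The only addition in your version is the (harmless) remark about the gradient at the diagonal $\x=\y$, which the paper does not mention.
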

\begin{proof}
For the first inequality,
\begin{align*}
&\left|\int_{\RR^n}\partial_{x_j}\phi(|\x-\y|)
(u_i(\y,t)-u_i(\x,t))\rho(\y,t)d\y\right|\leq
\int_{\RR^n}|u_i(\y,t)-u_i(\x,t)|\left|\partial_{x_j}\phi(|\x-\y|)\right|
\rho(\y,t)d\y\\
&\qquad\leq
\Vu(t)\Lip{\phi}\int_{\RR^n}\rho(y,t)d\y\leq V_0\Lip{\phi}m
=\I\Lip{\phi}m.
\end{align*}
One half of the second inequality is straightforward
$\displaystyle \int_{\RR^n}\phi(|\x-\y|)\rho(\y,t)d\y\leq\|\phi\|_{L^{\infty}}m=m$.
For the other half, recall that the flocking behavior of  $(\rho,\u)$ implies  the uniform bound $\Vx(t)\leq D$, and hence
\[
\int_{\RR^n}\phi(|\x-\y|)\rho(\y,t)d\y=
\int_{\y\in supp(\rho(t))}\!\!\!\!\!\!\!\!\!\!\!\!\!\!\!\!\phi(|\x-\y|)\rho(\y,t)d\y\geq\phi(D)\int_{\RR^n}\rho(\y,t)d\y
=\phi(D)m,
\]
for all $\x\in supp(\rho(t))$.
\end{proof}

\noindent
We proceed to discuss the regularity of  the CS model in view of  its majorant system \eqref{eq:M}.

\subsection{Flocking in one-dimensional  Cucker-Smale hydrodynamics}
We study  the majorant system \eqref{eq:M} for dimension $n=1$,
in which case, $M=u_x$ is a scalar. Denoting $d(t)=u_x(t)$,
we end up with a Riccati-type scalar equation along particle path
\begin{equation}\label{eq:1Dlambda}
d'=- d^2-p d+Q,\quad
\text{where}~p\in[\gamma, \Gamma],~Q\in[-\c,\c],
\end{equation}
for which we have the following conditional stability, consult \cite{LT} for details.
\begin{prop}[Critical threshold for Riccati-type majorant]\label{prop:1Dwoflock}
Consider initial value problem of \eqref{eq:1Dlambda}. We have the
following:
\begin{itemize}
\item
If $\gamma^2-4\c\geq 0$ and
$ d(0)\geq -(\gamma+\sqrt{\gamma^2-4\c})/2$, then $d(t)$ is
bounded for all time $t\geq 0$.
\item
If $ d(0)<-(\Gamma+\sqrt{\Gamma^2+4\c})/2$, then
$ d(t)\to-\infty$ in finite time.
\end{itemize}
\end{prop}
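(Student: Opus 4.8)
The plan is to analyze the scalar Riccati-type differential inequality \eqref{eq:1Dlambda} by comparison with autonomous Riccati equations whose coefficients are frozen at the extreme values of $p$ and $Q$. For the first (global regularity) claim, the key observation is that the right-hand side $-d^2-pd+Q$ is, for each fixed admissible $(p,Q)$, a downward parabola in $d$ with roots $r_\pm = \tfrac12(-p \pm \sqrt{p^2+4Q})$; it is nonnegative exactly on $[r_-,r_+]$ and negative outside. The worst case for driving $d$ downward is $p=\gamma$ together with $Q=-\c$, which under the assumption $\gamma^2-4\c\ge 0$ has two real roots $r_-^* = \tfrac12(-\gamma-\sqrt{\gamma^2-4\c})$ and $r_+^* = \tfrac12(-\gamma+\sqrt{\gamma^2-4\c})$, both negative. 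First I would show that the interval $[r_-^*, +\infty)$ is forward-invariant: if $d(t_0)=r_-^*$ then $d'(t_0) = -(r_-^*)^2 - p(t_0)r_-^* + Q(t_0) \ge -(r_-^*)^2-\gamma r_-^* - \c = 0$, using $r_-^* \le 0$ so that $-pr_-^*$ is minimized at $p=\gamma$, and $Q \ge -\c$; hence $d$ cannot cross $r_-^*$ from above. Since the hypothesis gives $d(0) \ge -(\gamma+\sqrt{\gamma^2-4\c})/2 = r_-^*$, we get $d(t) \ge r_-^*$ for all $t$. For the upper bound, I would compare with the frozen equation $\tilde d' = -\tilde d^2 - \gamma \tilde d + \c$ (using $p \ge \gamma$ and $Q \le \c$, valid wherever $d \ge 0$; for $d<0$ the bound $d\le 0 < $ any positive root is immediate), whose larger equilibrium $\tfrac12(-\gamma+\sqrt{\gamma^2+4\c})$ attracts all trajectories from below; a standard ODE comparison argument then yields a uniform upper bound on $d(t)$ depending only on $d(0), \gamma, \c$.

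For the second (finite-time blow-up) claim, I would bound the dynamics from above by the autonomous equation with the coefficients that make $d$ decrease as slowly as possible while still being driven to $-\infty$: since $d(0) < -(\Gamma+\sqrt{\Gamma^2+4\c})/2 < 0$, we have $pd \ge \Gamma d$ (because $d<0$ and $p\le\Gamma$), hence $d' = -d^2 - pd + Q \le -d^2 - \Gamma d + \c$ as long as $d$ stays negative. The comparison ODE $w' = -w^2 - \Gamma w + \c$, $w(0)=d(0)$, has roots $\tfrac12(-\Gamma\pm\sqrt{\Gamma^2+4\c})$; the initial condition lies strictly below the smaller root $w_- := \tfrac12(-\Gamma-\sqrt{\Gamma^2+4\c})$, so $w' < 0$ and $w$ decreases, staying below $w_- < 0$ forever, which in turn keeps the comparison $d \le w$ valid (both remain negative). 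On the region $w < w_-$ one has $-w^2 - \Gamma w + \c \le -\tfrac12 w^2$ for $|w|$ large (or more simply $w' \le -(w-w_-)(w-w_+) \le -c_0 w^2$ once $w$ is sufficiently negative), so $w$ — and hence $d$ — reaches $-\infty$ in finite time by the usual $\int dw/w^2$ estimate.

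The main obstacle, such as it is, is purely bookkeeping: making the comparison arguments rigorous requires being careful that the frozen-coefficient bounds ($Q \le \c$ paired with $p\ge\gamma$ when $d\ge 0$, versus $pd \ge \gamma d$ when $d<0$) are applied on the correct sign regime of $d$, and that the forward-invariance argument handles the boundary case $d(t_0)=r_-^*$ correctly (a one-sided inequality for $d'$ suffices). There is no deep difficulty — this is essentially the standard critical-threshold analysis for scalar Riccati equations as in \cite{ELT,LT} — so I would keep the writeup brief, or simply cite \cite{LT} as the statement already suggests, filling in only the forward-invariance and blow-up comparison steps for completeness.
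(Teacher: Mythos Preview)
Your proposal is correct and follows the standard frozen-coefficient comparison argument for scalar Riccati equations; the paper itself does not write out a proof but simply refers the reader to \cite{LT}, where essentially this same analysis is carried out. Your care in splitting the sign regimes of $d$ for the $-pd$ term, the forward-invariance at $r_-^*$, and the blow-up comparison below $w_-$ are exactly the right ingredients.
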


\noindent
Applying proposition \ref{prop:1Dwoflock} for the CS majorant equation \eqref{eq:1Dlambda} with $\gamma, \Gamma$ and $\c$
given in \eqref{eq:consts} we
derive the  following critical thresholds for one-dimensional CS in the non-vacuum region.

\begin{thm}[1D critical thresholds]\label{thm:1DNF}
Consider one-dimensional CS system  \eqref{eq:main1D}.
If the initial configuration satisfies
\[
\I \leq \frac{\phi^2(D)m}{4\Lip{\phi}}\quad\text{and}\quad
d_0 \geq -\frac{1}{2}\left(\phi(D)m+\sqrt{\phi^2(D)m^2-4\I\Lip{\phi}m}\right),\]
then $u_x(x,t)$ remains uniformly bounded for all $(x,t)\in supp (\rho)$.
On the hand, if $\displaystyle 
d_0<-\frac{1}{2}\left(m+\sqrt{m^2+4\I\Lip{\phi}m}\right)$, 
then there is a finite-time blow-up at $t=T_c$, where  
\[
\inf_{x\in supp(\rho(\cdot,t))} u_x(x,t)\to-\infty \ \ \mbox{as} \ \  t\rightarrow T_c-.
\]
\end{thm}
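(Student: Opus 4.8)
The plan is to reduce everything to the scalar Riccati dynamics \eqref{eq:1Dlambda} along particle paths and then quote Propositions \ref{prop:fit} and \ref{prop:1Dwoflock}. First I would differentiate the velocity equation \eqref{eq:main1Db} in $x$ and evaluate along a characteristic $\dot X(t)=u(X(t),t)$ with $X(0)=x\in supp(\rho_0)$: writing $d(t):=u_x(X(t),t)$ this produces the exact equation $d'=-d^2-p(t)d+Q(t)$, where $p(t)=\int_\RR\phi(|X(t)-y|)\rho(y,t)\,dy$ and $Q(t)=\int_\RR\partial_x\phi(|X(t)-y|)\big(u(y,t)-u(X(t),t)\big)\rho(y,t)\,dy$, i.e. precisely the form \eqref{eq:1Dlambda}.

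Second, I would fix the ranges of $p$ and $Q$. Let $[0,T^*)$ be the maximal interval on which the strong (Lipschitz) solution exists. On this interval Proposition \ref{prop:SDDI} applies — it requires only a strong solution, not a global one — and the free-energy argument from the proof of Theorem \ref{thm:flock} then gives the confinement $S(t)\le D$ with $D$ as in \eqref{eq:D} (under the non-locality \eqref{eq:phi2}, condition \eqref{eq:phi1} is automatic). Feeding $S(t)\le D$ into Proposition \ref{prop:fit} yields, uniformly over all characteristics and all $t\in[0,T^*)$, $p(t)\in[\phi(D)m,\,m]$ and $|Q(t)|\le\I\Lip{\phi}m$; that is, the constants \eqref{eq:consts}, $\gamma=\phi(D)m$, $\Gamma=m$, $\c=\I\Lip{\phi}m$.

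Third, I would apply Proposition \ref{prop:1Dwoflock} characteristic by characteristic. On the subcritical side, $\I\le\phi^2(D)m/(4\Lip{\phi})$ is exactly $\gamma^2-4\c\ge0$, and every characteristic satisfies $d(0)=u_{0x}(x)\ge d_0\ge-(\gamma+\sqrt{\gamma^2-4\c})/2$; the first bullet of Proposition \ref{prop:1Dwoflock} then bounds $d(t)$ on $[0,T^*)$ uniformly in $x$, the $-d^2$ term also ruling out escape to $+\infty$. Since $supp(\rho(t))$ is the image of $supp(\rho_0)$ under the flow of $u$ (by the continuity equation), this is a uniform bound on $u_x$ over $supp(\rho)$, which is the asserted regularity. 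On the supercritical side, pick $x_n\in supp(\rho_0)$ with $u_{0x}(x_n)\to d_0<-(\Gamma+\sqrt{\Gamma^2+4\c})/2=-\tfrac12(m+\sqrt{m^2+4\I\Lip{\phi}m})$; for $n$ large the corresponding $d_n$ starts below that threshold, so the second bullet gives $d_n(t)\to-\infty$ at some finite $T_c$, whence $\inf_{x\in supp(\rho(\cdot,t))}u_x(x,t)\to-\infty$ as $t\to T_c-$.

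The step I expect to be the main obstacle is the apparent circularity in the second one: the subcritical threshold constant $\gamma=\phi(D)m$ is built from the flocking diameter $D$, whose construction in Theorem \ref{thm:flock} was stated for a \emph{global} strong solution, whereas global regularity is precisely what we are trying to establish. The proof must therefore make explicit that the decay estimates of Proposition \ref{prop:SDDI}, and hence $S(t)\le D$, are valid on the entire maximal existence interval for a merely strong solution, so that the a priori bound on $u_x$ is genuinely available on $[0,T^*)$ and only then, combined with the continuation theory for strong solutions, yields the asserted bound. The remaining points — invoking the comparison with the common constants $\gamma,\Gamma,\c$ for every characteristic, excluding $u_x\to+\infty$ through the sign of $-d^2$, and locating the blow-up time as $T_c=T^*$ — are routine.
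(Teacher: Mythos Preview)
Your proposal is correct and follows essentially the same route as the paper: derive the scalar Riccati equation \eqref{eq:1Dlambda} along characteristics, invoke Proposition~\ref{prop:fit} to identify the constants \eqref{eq:consts}, and then apply Proposition~\ref{prop:1Dwoflock}. Your explicit handling of the apparent circularity (that $D$ is available on the maximal existence interval via Proposition~\ref{prop:SDDI}) is a point the paper leaves implicit, so your write-up is in fact more careful than the original on that score.
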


\begin{rem}
The thresholds in theorem \ref{thm:1DNF} correspond to darker areas in
\ref{sec:ff}, taking into account of the additional fast alignment property.
\end{rem}

\subsection{Flocking in two-dimensional Cucker-Smale hydrodynamics}
We extend the result to two dimensions. Instead of being a scaler, $M$
is a 2-by-2 matrix. The dynamics of $M$ in \eqref{eq:M} reads
\begin{equation}\label{eq:2DM}
\begin{bmatrix}M_{11}&M_{12}\\M_{21}&M_{22}\end{bmatrix}'=
-\begin{bmatrix}M_{11}^2+M_{12}M_{21}&M_{12}(M_{11}+M_{12})\\
M_{21}(M_{11}+M_{12})&M_{22}^2+M_{12}M_{21}\end{bmatrix}
-\begin{bmatrix}pM_{11}&pM_{12}\\pM_{21}&pM_{22}\end{bmatrix}
+\begin{bmatrix}Q_{11}&Q_{12}\\Q_{21}&Q_{22}\end{bmatrix},
\end{equation}
where $p\in[\gamma,\Gamma]$ and $|Q_{ij}|\leq \c$ for $i,j=1,2$.
For Cucker-Smale system \eqref{eq:main1D}, the constants are given in
\eqref{eq:consts}.

To bound the entries of $M_{ij}$, it is natural to employ $d:=\div\u$ which will play the role that $u_x$ had in 
the one-dimension setup. Note that
$d=\div\u$ is the trace $d=\text{tr}(M)=\lambda_1+\lambda_2$, where $\lambda_{1,2}$ are two eigenvalues of $M$.
The remaining difficulty is to bound the other entries
of $M$, namely $q:=M_{11}-M_{22}, r:=M_{12}$ and $s:=M_{21}$.
Expressed in terms of $(d,q,r,s)$, system \eqref{eq:2DM} reads
\begin{subequations}\label{eq:dqrs}
\begin{align}
d'+\frac{d^2+\eta^2}{2} & =-pd+(Q_{11}+Q_{22}),\label{eq:2Dd}\\
q'+q(d+p)&=Q_{11}-Q_{22},\label{eq:2Dq}\\
r'+r(d+p)&=Q_{12},\label{eq:2Dr}\\
s'+s(d+p)&=Q_{21},\label{eq:2Ds}
\end{align}
\end{subequations}
Here, the dynamics of $d$ in (\ref{eq:2Dd}) involves the \emph{spectral gap}
 $\eta:=\lambda_1-\lambda_2$ introduced in \cite{LT3} to characterize the critical thresholds of 2D restricted Euler-Poisson equations.

Observe that if $\eta$ is  uniformly bounded in time,
 $\eta(t)\leq\widetilde{c}$ then (\ref{eq:2Dd}) yields
\begin{equation}\label{eq:2Ddi}
d'=-\frac{d^2}{2}-pd+\widetilde{Q},
\quad p\in[\gamma, \Gamma], \quad
\widetilde{Q}:=Q_{11}+Q_{22}-\frac{\eta^2}{2}\in\left[-2\c-\frac{\widetilde{c}}{2}, 2\c+\frac{\widetilde{c}}{2}\right]
\end{equation}
which is a one dimensional equation of the type \eqref{eq:M}.
Therefore, as argued in \cite{LT3}, a bound on $\eta$ is at the heart of matter for two dimensional critical threshold. to this end we note the relation $\eta^2\equiv q^2+4rs$ and hence, if $(q,r,s)$ are bounded, so is
the spectral gap, $\eta$. This is the content of our next lemma.

\begin{lem}[Uniform bound for the spectral gap]\label{lem:gap}
Suppose $(q,r,s)$ are bounded initially by
\begin{equation}\label{eq:qrs}
\max\{|q(0)|, 2|r(0)|,2|s(0)|\}\leq B.
\end{equation}
If $d(t)\geq-\gamma+2\c B^{-1}$ for $t\in[0,T]$, then
the  $(q,r,s)$ remain bounded
\[\max\{|q(t)|, 2|r(t)|,2|s(t)|\}\leq B,~~\text{for}~t\in[0,T].\]
Moreover, the spectral gap $|\eta(t)|\leq\sqrt{2}B$ is also bounded
for $t\in[0,T]$.
\end{lem}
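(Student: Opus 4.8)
The plan is to treat the system \eqref{eq:dqrs} for $(q,r,s)$ as a linear system along particle paths, driven by the nonhomogeneous terms $Q_{ij}$ and with a decay/growth coefficient $(d(t)+p(t))$, and then run a continuity (bootstrap) argument on the quantity $\max\{|q|,2|r|,2|s|\}$. First I would observe that each of \eqref{eq:2Dq}--\eqref{eq:2Ds} has the integrating-factor form $\bigl(e^{\int_0^t (d+p)\,d\tau} \cdot (q,r,s)\bigr)' = e^{\int_0^t(d+p)\,d\tau}\,(Q_{11}-Q_{22},\,Q_{12},\,Q_{21})$, so the sign of $d+p$ controls whether the homogeneous part contracts. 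The hypothesis $d(t)\geq -\gamma + 2\c B^{-1}$ together with $p\geq\gamma$ gives $d(t)+p(t)\geq 2\c B^{-1}>0$, which is precisely the contraction rate we need.

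The core step is the bootstrap: let $T^*\in(0,T]$ be the largest time for which $F(t):=\max\{|q(t)|,2|r(t)|,2|s(t)|\}\leq B$ on $[0,T^*]$; such a $T^*>0$ exists by \eqref{eq:qrs} and continuity. On $[0,T^*]$ I would estimate, say for $q$, using \eqref{eq:2Dq} and $|Q_{11}-Q_{22}|\leq 2\c$,
\[
\frac{d}{dt}\tfrac12 q^2 = -q^2(d+p) + q(Q_{11}-Q_{22}) \leq -\bigl(2\c B^{-1}\bigr)q^2 + 2\c|q|,
\]
so whenever $|q|$ reaches the value $B$ we get $\frac{d}{dt}\tfrac12 q^2 \leq -2\c B + 2\c B = 0$; hence $|q|$ cannot exceed $B$. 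The same computation applied to $r$ and $s$ with the factor $2$ built in — i.e. writing the ODE for $2r$, which reads $(2r)' + (2r)(d+p) = 2Q_{12}$ with $|2Q_{12}|\leq 2\c$ — shows $2|r|$ and $2|s|$ likewise cannot exceed $B$. Therefore $F(t)\leq B$ is never violated on $[0,T]$, so $T^*=T$; this closes the argument for the bound on $(q,r,s)$. Finally, from the algebraic identity $\eta^2 = q^2 + 4rs$ and $|rs|\leq |r|\,|s|\leq (B/2)^2 = B^2/4$, one gets $\eta^2 \leq B^2 + 4\cdot B^2/4 = 2B^2$, i.e. $|\eta(t)|\leq\sqrt{2}\,B$ on $[0,T]$.

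The main obstacle — really the only subtlety — is that $d(t)$ appearing in the coefficient is not a free parameter but is itself evolving through \eqref{eq:2Dd}, which couples to $\eta$ (hence to $q,r,s$); so the hypothesis "$d(t)\geq -\gamma+2\c B^{-1}$ for $t\in[0,T]$" must be read as a standing assumption on the given solution on the interval $[0,T]$, and the lemma is genuinely a conditional (a priori) estimate, not a self-contained global statement. Provided one reads it that way, nothing further is needed: the coefficient $d+p$ is a known bounded function along each characteristic, the three equations decouple, and the barrier argument above is elementary. One should also note the borderline case $B$-is-attained-with-equality is handled by the non-strict inequality $\frac{d}{dt}\tfrac12 q^2\leq 0$ there, which is exactly what a standard comparison/continuity argument requires.
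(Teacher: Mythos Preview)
Your proof is correct and takes essentially the same approach as the paper's: both are barrier arguments showing that $|q|$, $2|r|$, $2|s|$ cannot cross the level $B$ because the ODE forces the derivative to have the right sign there, using $d+p\geq 2\c B^{-1}$. The only cosmetic difference is that the paper argues by contradiction via a sign case-split on $q$ at the first exit time, whereas you differentiate $\tfrac12 q^2$ to avoid the split; the spectral-gap bound $|\eta|\leq\sqrt{2}B$ is identical.
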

\begin{proof}
We prove the result for $q$ by contradiction.
Suppose there exists a (smallest)
$t_0\in[0,T]$ such that $|q(t)|>B$ for $t\in(t_0, t_0+\delta)$. By
continuity, $|q(t_0)|=B$. There are two cases.
\begin{itemize}
\item $q(t_0)=B, q'(t_0)>0$. Then
$q'(t_0)+q(t_0)(d(t_0)+p)>0+B(2\c B^{-1})=2\c$. This
contradicts with \eqref{eq:2Dq} as $Q_{11}-Q_{22}\leq 2\c$.
\item $q(t_0)=-B, q'(t_0)<0$. Then
$q'(t_0)+q(t_0)(d(t_0)+p)<0-B(2\c B^{-1})=-2\c$. This
also contradicts with \eqref{eq:2Dq} as $Q_{11}-Q_{22}\geq -2\c$.
\end{itemize}
Therefore, $|q(t)|\leq B$ for $t\in[0,T]$.
Same argument yields the boundedness of $r$ and $s$.
Finally, $|\eta(t)|=\sqrt{|q^2(t)+4r(t)s(t)|}\leq\sqrt{2}B$, for $t\in[0,T]$.
\end{proof}

Lemma \ref{lem:gap} tells us  that the spectral gap $\eta$ is bounded as long as the divergence $d$ is not too negative. Under this assumption, the majorant equation \eqref{eq:2Ddi} holds with 
$\widetilde{Q}\in[-2\c-B^2,2\c+B^2]$ and proposition
\ref{prop:1Dwoflock} then yields  the following result.

\begin{prop}\label{prop:2D} Let $B$ denote the bound of \eqref{eq:qrs} and
assume $d(0)\geq-\gamma+\sqrt{\gamma^2-4\c-2B^2}\geq-\gamma+2\c B^{-1}.$
Then $M_{ij}$ are uniformly bounded for all time.
\end{prop}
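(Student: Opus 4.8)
The plan is to combine Lemma~\ref{lem:gap} with Proposition~\ref{prop:1Dwoflock} via a continuation argument. The statement we must prove is: if $B$ is the bound in \eqref{eq:qrs} and $d(0)\geq -\gamma+\sqrt{\gamma^2-4\c-2B^2}\geq -\gamma+2\c B^{-1}$, then all $M_{ij}$ are bounded for all time. The heart of the matter is the chicken-and-egg coupling between $d$ and $\eta$: Lemma~\ref{lem:gap} controls the spectral gap (and hence $q,r,s$, and hence all off-trace entries of $M$) provided $d(t)\geq -\gamma+2\c B^{-1}$ on the relevant interval, while the reduced Riccati equation \eqref{eq:2Ddi} for $d$ — which requires the bound on $\eta$, namely $|\eta|\leq\sqrt{2}B$ so that $\widetilde Q\in[-2\c-B^2,2\c+B^2]$ — is what we need to keep $d$ itself from diverging downward. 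So the two estimates must be bootstrapped together rather than applied in sequence.

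The argument I would run is the following. Define $T^\ast:=\sup\{T\geq 0 : d(t)\geq -\gamma+2\c B^{-1}\text{ for all }t\in[0,T]\}$. By the hypothesis on $d(0)$ (note $-\gamma+\sqrt{\gamma^2-4\c-2B^2}> -\gamma$ and the second inequality in the hypothesis gives $d(0)\ge -\gamma+2\c B^{-1}$, with the square-root bound being the strictly stronger one we will actually propagate), and continuity of $d$ along the particle path, $T^\ast>0$. On $[0,T^\ast)$, Lemma~\ref{lem:gap} applies and yields $\max\{|q(t)|,2|r(t)|,2|s(t)|\}\leq B$ and $|\eta(t)|\leq\sqrt2 B$; consequently \eqref{eq:2Ddi} holds on $[0,T^\ast)$ with the effective source $\widetilde Q$ confined to $[-(2\c+B^2),\,2\c+B^2]$. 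This is a Riccati equation of the form treated in Proposition~\ref{prop:1Dwoflock} after the harmless rescaling to account for the factor $1/2$ on $d^2$ — more directly, one checks by the same one-sided comparison/barrier argument as in the proof of Lemma~\ref{lem:gap} that since $d(0)\ge -\gamma+\sqrt{\gamma^2-4\c-2B^2}$, and since the discriminant condition $\gamma^2-4\c-2B^2\ge 0$ ensures $-\gamma+\sqrt{\gamma^2-4\c-2B^2}\ge -\gamma+2\c B^{-1}$, the value $d_{\min}:=-\gamma+\sqrt{\gamma^2-4\c-2B^2}$ is a lower barrier: whenever $d$ touches $d_{\min}$ from above we have $d'= -\tfrac12 d^2 - p d + \widetilde Q \ge -\tfrac12 d_{\min}^2 - \Gamma d_{\min} - (2\c+B^2)$, and one verifies this lower bound is $\ge 0$ precisely by the defining quadratic inequality satisfied by $d_{\min}$ together with $p\le\Gamma$ — hence $d(t)\ge d_{\min}$ on all of $[0,T^\ast)$.

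Now the continuation step: $d(t)\ge d_{\min}> -\gamma+2\c B^{-1}$ (strictly, unless the discriminant vanishes, in which case one works with the closed threshold and a limiting argument or simply replaces strict by non-strict throughout) on $[0,T^\ast)$, so $d$ does not reach the critical value $-\gamma+2\c B^{-1}$ on $[0,T^\ast)$; by continuity it stays above it on a slightly larger interval, contradicting maximality of $T^\ast$ unless $T^\ast=\infty$. Hence $T^\ast=\infty$, and therefore for all $t\ge 0$ we have $d(t)\ge d_{\min}$ and $|\eta(t)|\le\sqrt2 B$ and $\max\{|q|,2|r|,2|s|\}\le B$. Finally $d$ is also bounded above: from \eqref{eq:2Ddi}, since $-\tfrac12 d^2 - pd$ is strongly negative for large positive $d$ (using $p\ge\gamma>0$) while $\widetilde Q$ is bounded, $d$ cannot escape to $+\infty$ either (standard Riccati upper barrier). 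Reconstructing $M$ from $(d,q,r,s)$ via $M_{11}+M_{22}=d$, $M_{11}-M_{22}=q$, $M_{12}=r$, $M_{21}=s$ gives uniform bounds on every $M_{ij}$, which is the claim.

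The main obstacle — and the only genuinely nontrivial point — is verifying that $d_{\min}=-\gamma+\sqrt{\gamma^2-4\c-2B^2}$ really serves simultaneously as (i) a lower barrier for the Riccati equation \eqref{eq:2Ddi} with the worst-case source $2\c+B^2$, and (ii) a value lying at or above the Lemma~\ref{lem:gap} threshold $-\gamma+2\c B^{-1}$, so that the two lemmas can be chained without a gap; this is exactly the content of the chained inequality in the hypothesis $d(0)\ge-\gamma+\sqrt{\gamma^2-4\c-2B^2}\ge-\gamma+2\c B^{-1}$, and the proof amounts to checking the elementary algebraic fact that $\sqrt{\gamma^2-4\c-2B^2}\ge 2\c B^{-1}$ is equivalent to $\gamma^2 \ge 4\c + 2B^2 + 4\c^2 B^{-2}$, i.e. to a quadratic condition on $B^2$ that the hypothesis implicitly encodes. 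Everything else is the same one-sided barrier/continuation mechanics already used in the proof of Lemma~\ref{lem:gap} and in \cite{LT}.
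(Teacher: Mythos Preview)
Your approach is essentially the paper's own: a bootstrap/continuation argument in which Lemma~\ref{lem:gap} and Proposition~\ref{prop:1Dwoflock} feed one another, so that the lower barrier $d_{\min}=-\gamma+\sqrt{\gamma^2-4\c-2B^2}$ for $d$ and the bound $B$ on $(q,r,s)$ are propagated simultaneously. The paper's proof is stated more tersely (``violation of the first condition contradicts Proposition~\ref{prop:1Dwoflock}; violation of the second contradicts Lemma~\ref{lem:gap}''), but the content is the same continuation mechanism you spell out with $T^\ast$.

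One slip to fix: in your barrier computation at $d=d_{\min}<0$ you write $d'\ge -\tfrac12 d_{\min}^2-\Gamma d_{\min}-(2\c+B^2)$ and justify it via $p\le\Gamma$. The inequality goes the other way: since $d_{\min}<0$, the term $-p\,d_{\min}=p|d_{\min}|$ is \emph{minimized} when $p=\gamma$, so the correct lower bound is $d'\ge -\tfrac12 d_{\min}^2-\gamma d_{\min}-(2\c+B^2)$, using $p\ge\gamma$. This is exactly $=0$ by the defining quadratic for $d_{\min}$ (which is built with $\gamma$, not $\Gamma$), so the barrier holds and the rest of your argument goes through unchanged.
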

\begin{proof}
We claim that
$d(t)\geq-\gamma+\sqrt{\gamma^2-4\c-2B^2}\geq-\gamma+2\c
B^{-1}$ and $ \max\{|q(t)|, 2|r(t)|,2|s(t)|\}\leq B$.
Indeed, violation of the first condition contradicts proposition
\ref{prop:1Dwoflock}. Violation of the second condition contradicts
lemma \ref{lem:gap}.
\end{proof}
\begin{rem}
We can rewrite the assumption in proposition \ref{prop:2D} as follows:
\[d(0)\geq-\gamma+\sqrt{\gamma^2-4\c-2B^2},\quad
B\leq\frac{1}{2}\sqrt{(\gamma^2-4\c)+\sqrt{(\gamma^2-4\c)^2-32\c^2}}.\]
Therefore, to ensure boundedness of $M$ in all time, we need $d(0)$ not too
negative, and $q(0), r(0), s(0)$ small.
\end{rem}

We now combine these estimate across the fan of all particle paths. With Cucker-Smale setup
\eqref{eq:consts}, we conclude the following theorem.

\begin{thm}[2D critical thresholds]\label{thm:2DNF}
Consider the two-dimensional CS system  \eqref{eq:main_2DCS}.
 If the initial configuration satisfies the following three estimates
\[
\I\leq\frac{(\sqrt{2}-1)\phi^2(D)m}{4\Lip{\phi}}, \quad
d_0 \geq-\frac{1}{2}\left(\phi(D)m+\sqrt{\phi^2(D)m^2-4\I\Lip{\phi}m-
2B_0^2}\right),
\]
\[
\K\leq\frac{1}{2}\sqrt{\phi^2(D)m^2-4\I\Lip{\phi}m+\sqrt{
\left(\phi^2(D)m^2-4\I\Lip{\phi}m\right)^2-32\I^2\Lip{\phi}^2m^2}},
\]
then $\grad_\x\u(\x,t)$ remains uniformly bounded for all $(\x,t)\in supp (\rho)$.\newline
if, on the other hand, 
\[
d_0<-\frac{1}{2}\left(m+\sqrt{m^2+4\I\Lip{\phi}m}\right),\]
\[|\partial_{x_2}u_{01}|,|\partial_{x_1}u_{02}|\geq\frac{\I\Lip{\phi}m}
{\sqrt{m^2+4\I\Lip{\phi}m}},~~\text{and}~~
\partial_{x_2}u_{01}\cdot\partial_{x_1}u_{02}>0,\]
then there is a finite time blowup $t=T_c>0$ such that
 $\inf_{\x\in supp(\rho(\cdot,t))}\div\u(x,t)\to-\infty$ as $t\rightarrow T_c-$.
\end{thm}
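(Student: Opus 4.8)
The plan is to obtain the regularity part of the theorem as the ``fan of characteristics'' version of the one-path estimates assembled in this section, and the blow-up part from a Riccati comparison for the divergence in which the spectral gap drives rather than damps the dynamics.

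\emph{Regularity.} Since $\phi$ is non-local, \eqref{eq:phi1} holds automatically, so Theorem \ref{thm:flock} gives the uniform diameter bound $S(t)\le D$ with $D$ as in \eqref{eq:D}, and Proposition \ref{prop:fit} then applies: along every characteristic $\dot\x=\u(\x,t)$ issuing from a point of $supp(\rho_0)$ the matrix $M(t)=\grad_\x\u(\x(t),t)$ obeys the majorant system \eqref{eq:M}, \eqref{eq:2DM} with the Cucker--Smale constants \eqref{eq:consts}, namely $\gamma=\phi(D)m$, $\Gamma=m$, $\c=\I\Lip{\phi}m$. In the scalar variables $(d,q,r,s)=(\div\u,\,M_{11}-M_{22},\,M_{12},\,M_{21})$ this becomes \eqref{eq:dqrs}, and the three hypotheses of the theorem are exactly what is needed to activate Proposition \ref{prop:2D} (equivalently its Remark) along each characteristic with $B=\K$: the bound $\I\le(\sqrt2-1)\phi^2(D)m/(4\Lip{\phi})$ is equivalent to $(\gamma^2-4\c)^2\ge 32\c^2$, so the nested discriminants there are real; the bound on $\K=\max\{|q(0)|,2|r(0)|,2|s(0)|\}$ is the admissible window for $B$; and the lower bound on $d_0$ puts $d(0)$ above the resulting threshold. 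The mechanism inside Proposition \ref{prop:2D}, which I would recall, is the interlocked continuity argument: on the maximal interval where $d(t)\ge-\gamma+2\c\K^{-1}$ and $\max\{|q|,2|r|,2|s|\}\le\K$, Lemma \ref{lem:gap} bounds the spectral gap $|\eta(t)|\le\sqrt2\,\K$, which closes the divergence equation \eqref{eq:2Dd} into the scalar Riccati inequality \eqref{eq:2Ddi} with inhomogeneity controlled by $\c$ and $\K$; Proposition \ref{prop:1Dwoflock} then keeps $d$ strictly above its threshold while the argument of Lemma \ref{lem:gap} keeps $(q,r,s)$ strictly inside their bounds, so each of the two conditions is open as well as closed and the interval is all of $[0,\infty)$. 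Since the resulting bound on $|M(t)|$ depends only on $\gamma,\Gamma,\c,\K$ it is uniform over the fan, and the supremum over characteristics gives $\sup_t\|\grad_\x\u(\cdot,t)\|_{L^\infty(supp(\rho(t)))}<\infty$.

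\emph{Blow-up.} Here I would run \eqref{eq:dqrs} along a characteristic through a point $\x_0\in supp(\rho_0)$ at which the divergence is most negative and the off-diagonal slopes are large and of the same sign, $\partial_{x_2}u_{01}(\x_0)\,\partial_{x_1}u_{02}(\x_0)>0$. A comparison argument on \eqref{eq:2Dr}, \eqref{eq:2Ds} shows that once $d$ drops below the stated threshold the coefficient $d(t)+p$ stays negative, so $r$ and $s$ keep their initial signs, and the lower bounds $|\partial_{x_2}u_{01}(\x_0)|,|\partial_{x_1}u_{02}(\x_0)|\ge \I\Lip{\phi}m/\sqrt{m^2+4\I\Lip{\phi}m}$ are exactly what prevents the inhomogeneities $Q_{12},Q_{21}$ (each of size $\le\c$) from dragging $rs$ below a fixed positive level; hence $\eta^2=q^2+4rs\ge\eta_*^2>0$ for as long as the solution persists. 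Feeding this into \eqref{eq:2Dd} gives $d'\le-\tfrac12 d^2-pd+2\c-\tfrac12\eta_*^2$, and since $d_0<-\tfrac12(m+\sqrt{m^2+4\I\Lip{\phi}m})$ places $d_0$ strictly below the smaller root of the worst-case quadratic on the right, that right-hand side stays negative and coercive in $d$; thus $d(t)\to-\infty$ at a finite $T_c$, and as the characteristic stays in $supp(\rho(\cdot,t))$ this yields $\inf_{\x\in supp(\rho(\cdot,t))}\div\u(\x,t)\to-\infty$ as $t\to T_c-$.

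The main obstacle in both parts is the self-referential coupling of the estimates. In the regularity part the Riccati bound on $d$ requires the spectral-gap bound, which in turn requires $d$ not too negative, so the delicate step is to verify that the two closed conditions of the bootstrap are simultaneously \emph{open} (that the strict inequalities propagate) and that the window for $B$ is nonempty -- this is where the $(\sqrt2-1)$ factor in the hypothesis on $\I$, guaranteeing the nested discriminant is real with room to spare, enters. In the blow-up part the coefficient $d(t)+p$ in the $r,s$ equations is itself part of the unknown, so one must rule out $r(t)s(t)\to0$ before $d\to-\infty$; the size conditions on the off-diagonal data are calibrated for exactly this.
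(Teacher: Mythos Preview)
Your proposal follows the paper's own argument essentially line for line. For regularity you invoke Theorem \ref{thm:flock} and Proposition \ref{prop:fit} to land in the majorant system with the Cucker--Smale constants \eqref{eq:consts}, then feed the three hypotheses into Proposition \ref{prop:2D} (via its Remark) and sweep over the fan of characteristics --- exactly what the paper does. For blow-up the paper itself omits the proof, noting only in a remark that the off-diagonal size and sign conditions keep the spectral gap $\eta$ real (so $\eta^2\ge 0$ does not help the divergence equation), after which the one-dimensional Riccati blow-up applies; your sketch is the same mechanism, though you aim for the slightly stronger $\eta^2\ge\eta_*^2>0$, which is more than is needed to recover the stated threshold.
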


\begin{rem}
Note that if $\K=0$, the above result is reduced to the one dimensional case. In general, the bound on $B$ (and hence on the spectral gap), restricts the range of sub-critical  $d(0)$, while still keeping the relevant range to include negative initial divergence.
\end{rem}
\begin{rem}
 We provide a critical threshold of
the initial profile which leads to a  finite time break down. The idea
and the result are similar to the one-dimensional case.
The additional assumptions on $\partial_{x_2}u_{01}$ and $\partial_{x_1}u_{02}$ made in the second part of the theorem,  guarantee that  the spectral gap $\eta(\cdot,t)$ is real for all time; we omit the proof, as  it does not prevent $d$ from a finite-time blowup.
\end{rem}

%%%%%%%%%%%%%%%%%%%%
\section{Fast alignment in Cucker-Smale hydrodynamics}\label{sec:fast}
%%%%%%%%%%%%%%%%%%%
\subsection{General considerations}\label{sec:ff}
In this subsection, we introduce a new prototype of problems to
characterize the dynamics of $M$, taking advantage of the fast
alignment property.

From the proof of proposition \ref{prop:fit}, we have
\[\left|\int_{\RR^n}\partial_{x_j}
\phi(|\x-\y|)(u_i(\y,t)-u_i(\x,t))\rho(\y,t)d\y\right|\leq \Vu(t)\Lip{\phi}m,\quad
i,j=1,\cdots,n.\]
Instead of taking the rough maximum principle bound $\Vu(t)\leq
\Vu(0)=\I$, we make use of the much stronger fast alignment property
$\displaystyle \frac{d}{dt}\Vu(t)\leq -m\phi(D)\Vu(t)$.
It leads to the following prototype of majorant system
\begin{subequations}\label{eq:Mdv}
\begin{align}
M'&= -M^2-pM+\Vu Q, \qquad
0<\gamma\leq p\leq\Gamma, \quad |Q_{ij}|\leq\c,
i,j=1,\cdots,n.\\
\displaystyle \frac{d}{dt}\Vu & \leq  -G\Vu.  \label{eq:ffdv}
\end{align}
\end{subequations}
Such majorant system holds for Cucker-Smale equations \eqref{eq:main_2DCS},with
\begin{equation}\label{eq:constff}
\gamma=\phi(D)m, ~~\Gamma=m, ~~C=\Lip{\phi}m, ~~G=\phi(D)m.
\end{equation}

\noindent
We now couple the dynamics of $M$ to  the dynamics of $V$. Because of   fast alignment, the  (``bad'') term 
 $VQ$ has an exponentially decaying  change on $M$, which enables larger  set of sub-critical initial
configurations which ensure the boundedness of $M$ (illustrated in figure \ref{fig:oned}).

\subsection{One-dimensional flocking with fast alignment}\label{sec:1DFF}
We study the evolution of system \eqref{eq:Mdv} in one-dimension,
where $ d=M$ is a scalar. The $2\times 2$ system reads
\begin{subequations}\label{eq:lambdadv}
\begin{align}
d'&=- d^2-p d+c\Vu, \qquad
p\in[\gamma,\Gamma],~~~ c\in[-C,C].\\
\displaystyle \frac{d}{dt}\Vu &\leq  -G\Vu.
\end{align}
\end{subequations}

The following theorem characterizes the dynamics of $( d, \Vu)$.
\begin{thm}\label{thm:lambdadv}
Consider the $2\times 2$ majorant system \eqref{eq:lambdadv}. 
 There exists an upper threshold function $\ThU:\RR^+\to[-\gamma,+\infty)$,
  defined implicitly as
\begin{equation}\label{eq:fr}
\ThU(0)=-\gamma,~ \quad \ThU'(x)=
\begin{cases}
\displaystyle \frac{C}{\gamma+G},&x\to0+\\
\displaystyle \frac{-\ThU^2(x)-\gamma \ThU(x)-Cx}{-Gx}& \text{if}~\ThU(x)<0\\
\displaystyle \frac{-\ThU^2(x)-\Gamma \ThU(x)-Cx}{-Gx}& \text{if}~\ThU(x)\geq0
\end{cases}
\end{equation}
such that, if $ d(0)>\ThU(V_0)$ for all $x$, i.e. if $(V_0, d(0))$
lies above $\ThU$, then $(\Vu(t), d(t))$ remain bounded for all
time, and $ d(t)\to0, \Vu(t)\to0$ as $t\to\infty$.\newline
On the other hand, there exists a lower threshold  function $\ThL:~\RR^+\to(-\infty,-\Gamma]$,
\begin{equation}\label{eq:fl}
\ThL(0)=-\Gamma,~ \quad \ThL'(x)=
\begin{cases}
-\displaystyle \frac{C}{\Gamma+G},& x\to0+\\
\displaystyle \frac{-\ThL^2(x)-\Gamma \ThL(x)+Cx}{-Gx}
&x>0.
\end{cases}
\end{equation}
such that, if  $d(0)<\ThL(V_0)$, i.e. $(V_0, d(0))$
lies below $\ThL$, then $ d(t)\to-\infty$ at a finite time.
\end{thm}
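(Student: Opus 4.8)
The plan is to treat the $2\times 2$ majorant system \eqref{eq:lambdadv} as a planar dynamical system in the phase variables $(\Vu, d)$ and to construct the threshold curves $\ThU$, $\ThL$ as invariant separatrices. The first observation is that $\Vu$ decouples: it is driven down by \eqref{eq:ffdv}, so $\Vu(t)\le V_0 e^{-Gt}\to 0$ monotonically, and in particular $\Vu$ may be used as a time-like variable along any trajectory (it is strictly decreasing as long as $\Vu>0$). Dividing the $d$-equation by the $\Vu$-equation, the worst-case trajectory obeys
\[
\frac{d\,d}{d\Vu}=\frac{-d^2-pd+c\Vu}{-G\Vu},
\]
and to get the \emph{extremal} (most dangerous) trajectory for the blow-up side we take $c=-C$, $p=\Gamma$; for the \emph{safe} side we take $c=+C$ and $p=\gamma$ when $d<0$, $p=\Gamma$ when $d\ge 0$ (since $pd$ is the damping term, its sign determines which bound is worst). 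This is exactly the ODE defining $\ThU'$ and $\ThL'$ in \eqref{eq:fr}, \eqref{eq:fl}; the stated boundary slope at $x\to 0+$ comes from linearizing near the rest point $(\Vu,d)=(0,-\gamma)$ (resp. $(0,-\Gamma)$), where one solves $(\gamma+G)\ThU'(0)=C$ after noting $-d^2-\gamma d$ vanishes to second order at $d=-\gamma$.

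For the \textbf{sub-critical (boundedness) claim}, I would argue by a comparison/invariant-region argument. Let $\overline{d}(t)$ solve $\overline{d}'=-\overline{d}^2-p\,\overline{d}+C\Vu(t)$ with the extremal choice of $p$ as above, and $\overline{d}(0)=d(0)$; by the standard Riccati comparison, $d(t)\le \overline{d}(t)$ pointwise, so it suffices to bound $\overline{d}$. Consider the region $\mathcal R=\{(\Vu,d): d\ge \ThU(\Vu),\ 0\le \Vu\le V_0\}$. By construction of $\ThU$ as a solution of the extremal ODE, the vector field $(-G\Vu,\, -d^2-pd+C\Vu)$ is tangent to the graph $d=\ThU(\Vu)$ and points into $\mathcal R$ along its lower boundary; the line $\Vu=0$ is invariant; and on the far side $\overline d$ cannot escape to $+\infty$ because once $\overline d$ is large and positive the $-\overline d^2$ term dominates. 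Hence $\mathcal R$ is forward-invariant, so if $(V_0,d(0))\in\mathcal R$, i.e. $d(0)>\ThU(V_0)$, the trajectory stays in $\mathcal R$ and $\overline d$ (hence $d$) stays bounded. As $t\to\infty$, $\Vu(t)\to 0$, so $\overline d$ is eventually governed by $\overline d'=-\overline d^2-p\overline d+o(1)$, which forces $\overline d(t)\to 0$; combined with the lower bound $d\ge \ThU(\Vu)\to \ThU(0)=-\gamma$ collapsing toward $0$ as well, we get $d(t)\to 0$, $\Vu(t)\to 0$.

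For the \textbf{super-critical (blow-up) claim}, I would show that starting below $\ThL$ forces $d$ to decrease fast enough to reach $-\infty$ in finite time. Here the relevant comparison is from below: let $\underline d$ solve $\underline d'=-\underline d^2-\Gamma\underline d-C\Vu(t)$, so $d(t)\le$ (via the reversed Riccati comparison, using $p\le\Gamma$ and $c\ge -C$) $\underline d(t)$. Since $\Vu(t)\le V_0 e^{-Gt}$, the source term $-C\Vu(t)$ is an integrable, exponentially small perturbation. The curve $\ThL$ is, by construction, the boundary of the basin of the rest point $(0,-\Gamma)$ for this extremal system: below it, $\underline d' \le -\underline d^2-\Gamma\underline d+C V_0 e^{-Gt}$, and one checks that if $d(0)<\ThL(V_0)$ then $\underline d$ stays bounded away from $[-\Gamma,0]$ on the negative side and in fact $\underline d + \Gamma$ becomes and remains negative, after which $\underline d' \le -\underline d^2-\Gamma\underline d - C\Vu \le -\tfrac12 \underline d^2$ for $\underline d$ sufficiently negative (absorbing the $-\Gamma \underline d>0$ term and the small source for $t$ large), giving $\underline d(t)\to -\infty$ in finite time by the standard Riccati blow-up $\underline d(t)\le 1/(1/\underline d(t_1)+\tfrac12(t-t_1))$.

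The main obstacle is making the invariant-region / basin-of-attraction arguments rigorous despite the non-autonomous, exponentially decaying forcing $\pm C\Vu(t)$: one must verify that $\ThU$ and $\ThL$, defined as solutions of the singular ODEs \eqref{eq:fr}–\eqref{eq:fl} (singular because of the $1/(-Gx)$ and $1/(-Gx)$ factors at $x=0$), actually exist, are well-defined and monotone, and genuinely separate trajectories of the full system. The cleanest route is to pass to the autonomous planar system in $(\Vu, d)$ obtained by using the extremal constant choices of $(p,c)$ — this is autonomous precisely because $\Vu$ appears explicitly but time does not — and then invoke planar phase-plane analysis: $\ThU$ is the stable manifold of $(0,-\gamma)$ (its slope at the node being read off from the linearization, which is where the $x\to 0+$ formulas come from), $\ThL$ is the stable manifold of $(0,-\Gamma)$, and the comparison principles above transfer these separatrix properties back to the original variable-coefficient system. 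I would also double-check the two branches in \eqref{eq:fr} ($\ThU<0$ vs. $\ThU\ge 0$) correspond exactly to switching which of $\gamma,\Gamma$ is the worst bound on the damping coefficient $p$, which is the only subtlety in the extremal selection.
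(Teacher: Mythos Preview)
Your overall strategy---phase-plane analysis of an extremal constant-coefficient system, identification of $\ThU,\ThL$ as separatrices (stable manifolds of the saddles $(0,-\gamma)$ and $(0,-\Gamma)$), and a comparison principle to transfer back to the variable-coefficient inequalities---is exactly the route the paper takes (their Proposition~\ref{prop:equal} and Lemma~\ref{thm:compare}). However, you have the extremal choice of $c$ reversed in both halves, and as a consequence your comparison inequalities point the wrong way.

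For the sub-critical claim the danger is $d\to-\infty$, so you need a \emph{lower} barrier for $d$. The comparison function must therefore solve the equation that makes $d'$ as \emph{negative} as possible, i.e.\ $c=-C$ (and $p=\gamma$ when $d<0$, $p=\Gamma$ when $d\ge 0$); note it is $-Cx$, not $+Cx$, that appears in the numerator of~\eqref{eq:fr}. Your $\overline d$, built with $c=+C$, satisfies $d\le\overline d$---an \emph{upper} bound, which cannot rule out $d\to-\infty$. Even if your invariant-region argument shows $\overline d\ge\ThU(\Vu)$, the inequality $d\le\overline d$ does not let you conclude $d\ge\ThU(\Vu)$. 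Symmetrically, for the blow-up claim you need $d\le\omega$ with $\omega\to-\infty$, which forces you to take the \emph{most favorable} coefficients $c=+C$, $p=\Gamma$ (this is the $+Cx$ in~\eqref{eq:fl}); your choice $c=-C$ yields instead $d\ge\underline d$, which cannot force $d$ down. Once the signs are corrected the argument goes through as in the paper: the four cases of their comparison lemma are precisely the four combinations of $\mathrm{sign}(\omega)$ and direction of comparison, and the piecewise definition of $\ThU'$ in~\eqref{eq:fr} records the switch between cases (1b) and (2b). One further small point: your claim that ``$d\ge\ThU(\Vu)\to\ThU(0)=-\gamma$ collapsing toward $0$'' is not correct, since $\ThU(0)=-\gamma\neq 0$; the convergence $d(t)\to 0$ comes from the fact that trajectories strictly above the stable manifold of the saddle $(0,-\gamma)$ are attracted to the stable node at the origin.
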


Apply theorem \ref{thm:lambdadv} to system \eqref{eq:main_2DCS} by plugging
in the values of the constants given in \eqref{eq:constff} and combine across the fan of all particle paths. Theorem \ref{thm:1DCS} follows with
\begin{equation}\tag{Cucker-Smale: $\ThU$}
\ThU(0)=-\phi(D)m,~ \ThU'(x)=
\begin{cases}
\displaystyle \frac{\Lip{\phi}}{2\phi(D)}&x\to0+\\
\displaystyle \frac{-\ThU^2(x)-\phi(D)m \ThU(x)-\Lip{\phi}mx}{-\phi(D)mx}&\text{if}~\ThU(x)<0\\
\displaystyle \frac{-\ThU^2(x)-m\ThU(x)-\Lip{\phi}mx}{-\phi(D)mx}&\text{if}~\ThU(x)\geq0
\end{cases},
\end{equation}
\begin{equation}\tag{Cucker-Smale: $\ThL$}
\ThL(0)=-m,~ \ThL'(x)=
\begin{cases}
\displaystyle -\frac{\Lip{\phi}}{1+\phi(D)}& x\to0+\\
\displaystyle \frac{-\ThL^2(x)-m\ThL(x)+\Lip{\phi}mx}{-\phi(D)mx}
&x>0
\end{cases}.
\end{equation}

\begin{rem}
Comparing theorem \ref{thm:1DNF} and theorem
\ref{thm:1DCS} we see that the additional fast alignment property enables us to establish a much larger area of sub-critical $(\I,\J)$ for which  $u_x$ remains bounded in the non-vacuum area. In particular,
an upper bound of $\I$ is not required any more .
\end{rem}

\subsection{Proof of theorem \ref{thm:lambdadv}}
The proof of the theorem can be separate into two parts. First, we
discuss the evolution of the majorant system
\begin{subequations}\label{eq:omegaeta}
\begin{align}
\frac{d}{dt}\omega&=-\omega^2-E\omega+F\eta,\\
\frac{d}{dt}\eta&=-G\eta,
\end{align}
where $E>0,F\in\RR,G>0$ are constant coefficients.
\end{subequations}
Then, we state a comparison principle, comparing $( d,\Vu)$ with
$(\omega,\eta)$ and  derive critical thresholds for the evolution of the inequality system \eqref{eq:lambdadv}.

\begin{prop}[Critical threshold for the majorant system (\ref{eq:omegaeta})]\label{prop:equal}
Suppose $(\eta(t),\omega(t))$ satisfy \eqref{eq:omegaeta}  with initial condition $\omega(0)=\omega_0$,
$\eta(0)=\eta_0>0$. Then, there exists a separatrix curve $f(\cdot)$ such that
\begin{itemize}
\item If $\omega_0> f(\eta_0)$, i.e. $(\eta_0,\omega_0)$ lies above
  $f$, we have $\omega(t)\to 0, \eta(t)\to
  0$ as $t\to\infty$,
\item If $\omega_0=f(\eta_0)$, i.e. $(\eta_0,\omega_0)$ lies on
  $f$, we have $\omega(t)\to -E, \eta(t)\to
  0$ as $t\to\infty$,
\item If $\omega_0< f(\eta_0)$, i.e. $(\eta_0,\omega_0)$ lies below
  $f$, we have $\omega(t)\to -\infty$ as $t\to\infty$.
\end{itemize}
The separatrix  $f$ is implicitly defined below,
\begin{equation}\label{eq:1Df}
f(0)=-E,~ f'(0)=-\frac{F}{E+G},~ f'(x)=\frac{-f^2(x)-Ef(x)+Fx}{-Gx}~
\text{ for }~x\in(0,+\infty).
\end{equation}
\end{prop}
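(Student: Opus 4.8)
The plan is to turn the pair $(\eta(t),\omega(t))$ into an orbit of an autonomous planar flow and to read off the separatrix from its phase portrait. First I would solve the decoupled $\eta$-equation explicitly, $\eta(t)=\eta_0e^{-Gt}$; since $G>0$ and $\eta_0>0$, $\eta$ stays positive and decreases monotonically to $0$, so one may legitimately use $\eta$ as a new ``time'' variable. In the $(\eta,\omega)$-plane the system $\dot\eta=-G\eta$, $\dot\omega=-\omega^2-E\omega+F\eta$ is autonomous with exactly two equilibria in $\{\eta\geq0\}$: the origin $(0,0)$, whose linearization has eigenvalues $-G,-E<0$ (a stable node), and $(0,-E)$, whose linearization $\left(\begin{smallmatrix}-G&0\\ F&E\end{smallmatrix}\right)$ has eigenvalues $-G<0$, $E>0$ (a saddle). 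I would \emph{define} the separatrix $f$ to be the $\{\eta>0\}$-branch of the stable manifold of this saddle, including its endpoint $(0,-E)$. Equivalently, $f$ is the unique solution on $(0,\infty)$ of the singular ODE $f'(x)=\bigl(-f^2(x)-Ef(x)+Fx\bigr)/(-Gx)$ with $f(0^+)=-E$; inserting the ansatz $f(x)=-E+xg(x)$ and matching leading order at $x=0$ forces the value $f'(0^+)=-F/(E+G)$, which is exactly the slope of the stable eigendirection $(E+G,-F)$.

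The construction and global control of $f$ is the technical core of the proof. Existence and uniqueness of the solution through the singular endpoint follow from the stable-manifold theorem near $(0,-E)$ (equivalently, from a contraction argument after the substitution $f=-E+xg$); that $f$ extends as a graph to all of $(0,\infty)$ follows from an a priori bound ruling out blow-up of $f$ at a finite value of $x$ — for $x$ large with $f$ bounded one has $f'(x)\to -F/G$, so $f$ grows at most linearly. Once $f$ is in hand, the partition statement is immediate: the right-hand side of the ODE for $\omega$ as a function of $\eta$ is locally Lipschitz in $\omega$ for $\eta>0$, so no orbit crosses the graph of $f$, and the open strip $\{\eta>0\}$ splits into the flow-invariant regions $\{\omega_0>f(\eta_0)\}$ and $\{\omega_0<f(\eta_0)\}$.

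For the subcritical alternative $\omega_0>f(\eta_0)$, let $\omega_s(t):=f(\eta(t))$ be the separatrix orbit through $(\eta_0,f(\eta_0))$; by construction $\omega_s(t)\to -E$ and $\eta(t)\to 0$. Put $v:=\omega-\omega_s>0$. Since $\omega_s$ solves the same equation with the same forcing $F\eta(t)$, the forcing cancels and $v'=-(\omega+\omega_s+E)v=-v^2-(2\omega_s+E)v$. As $2\omega_s+E\to -E<0$, the term $-v^2$ prevents blow-up of $v$, and once $2\omega_s+E\le -E/2$ the inequality $v'\ge v(E/2-v)$ keeps $v$ bounded away from $0$; thus $v$ eventually lies in a compact subset of $(0,\infty)$, and since its equation is asymptotically autonomous with attracting equilibrium $E$ (the limit equation is $\bar v'=\bar v(E-\bar v)$) we get $v(t)\to E$. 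Hence $\omega=\omega_s+v\to -E+E=0$, while $\eta(t)\to 0$. The borderline case $\omega_0=f(\eta_0)$ is precisely the defining property of $f$: $\omega(t)=\omega_s(t)\to -E$, $\eta(t)\to 0$.

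For the supercritical alternative $\omega_0<f(\eta_0)$, set $w:=\omega_s-\omega>0$; again the forcing cancels, giving $w'=-(\omega+\omega_s+E)w=w^2-(2\omega_s+E)w$. Once $2\omega_s+E\le -E/2$ one has $w'\ge w^2+(E/2)w\ge w^2$, and the differential inequality $w'\ge w^2$ with $w>0$ forces $w\to +\infty$ at a finite time $t^*\le T+w(T)^{-1}$; therefore $\omega=\omega_s-w\to -\infty$, proving the third alternative (indeed the blow-up is at finite time). The only genuine obstacle in this program is the second paragraph — pinning down $f$ at the singular endpoint $x=0$ and showing it extends to all of $(0,\infty)$ without blowing up; the three dichotomy conclusions then follow from the elementary comparison and Riccati bookkeeping above.
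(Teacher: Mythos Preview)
Your proposal is correct and follows the same phase-plane strategy as the paper: identify the stable node at the origin and the saddle at $(0,-E)$, take the $\{\eta>0\}$-branch of the saddle's stable manifold as the separatrix, and derive the ODE \eqref{eq:1Df} via $d\omega/d\eta$. The paper's own proof is much terser --- it computes \eqref{eq:1Df}, then simply asserts with reference to the phase-portrait figure that flows above $f$ converge to $O$ and flows below diverge. Your difference variables $v=\omega-\omega_s$ and $w=\omega_s-\omega$, with the forcing $F\eta$ cancelling to leave Riccati equations driven by $2\omega_s+E\to -E$, are a clean way to make those two assertions rigorous, and your observation that the supercritical blow-up is actually in \emph{finite} time is sharper than the stated ``$t\to\infty$''.

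One point to tighten in your second paragraph: the global extension of $f$ to all of $(0,\infty)$ does \emph{not} hold when $F<0$ (a case the paper genuinely uses --- see Lemma~\ref{thm:compare}(1b),(2b) with $F=-C$). For $F<0$ the separatrix has $f'(0)>0$, becomes positive, and then $f'(x)\ge f(x)^2/(Gx)$ forces $f\to+\infty$ at a finite $x_{\max}$; your ``$f$ grows at most linearly'' heuristic is circular here. This is not fatal to the proposition: one reads the trichotomy with the convention $f(\eta_0)=+\infty$ for $\eta_0\ge x_{\max}$, so every such datum is supercritical, and your $w'\ge w^2$ argument still yields blow-up. You were right to flag this paragraph as the genuine obstacle; the paper glosses over it as well.
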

\begin{proof}
The linearized system associated with (\ref{eq:omegaeta}) has two stationary points --- a stable point at $O(0,0)$ and a saddle at $A(0,-E)$. 
Figure \ref{fig:phase} shows the phase plane of $(\eta,\omega)$.
A critical curve $f$, starting from $A$ and traveling 
along the vector field,  divides the plane $\RR^+\times\RR$ into
two parts. Flows starting above $f$ converge to the stable point
$O$, while flows starting below $f$ will diverge.

\begin{figure}[ht]
\vspace*{-3.0cm}
\centering
\hspace*{0.5cm} \includegraphics[width=.49\linewidth]{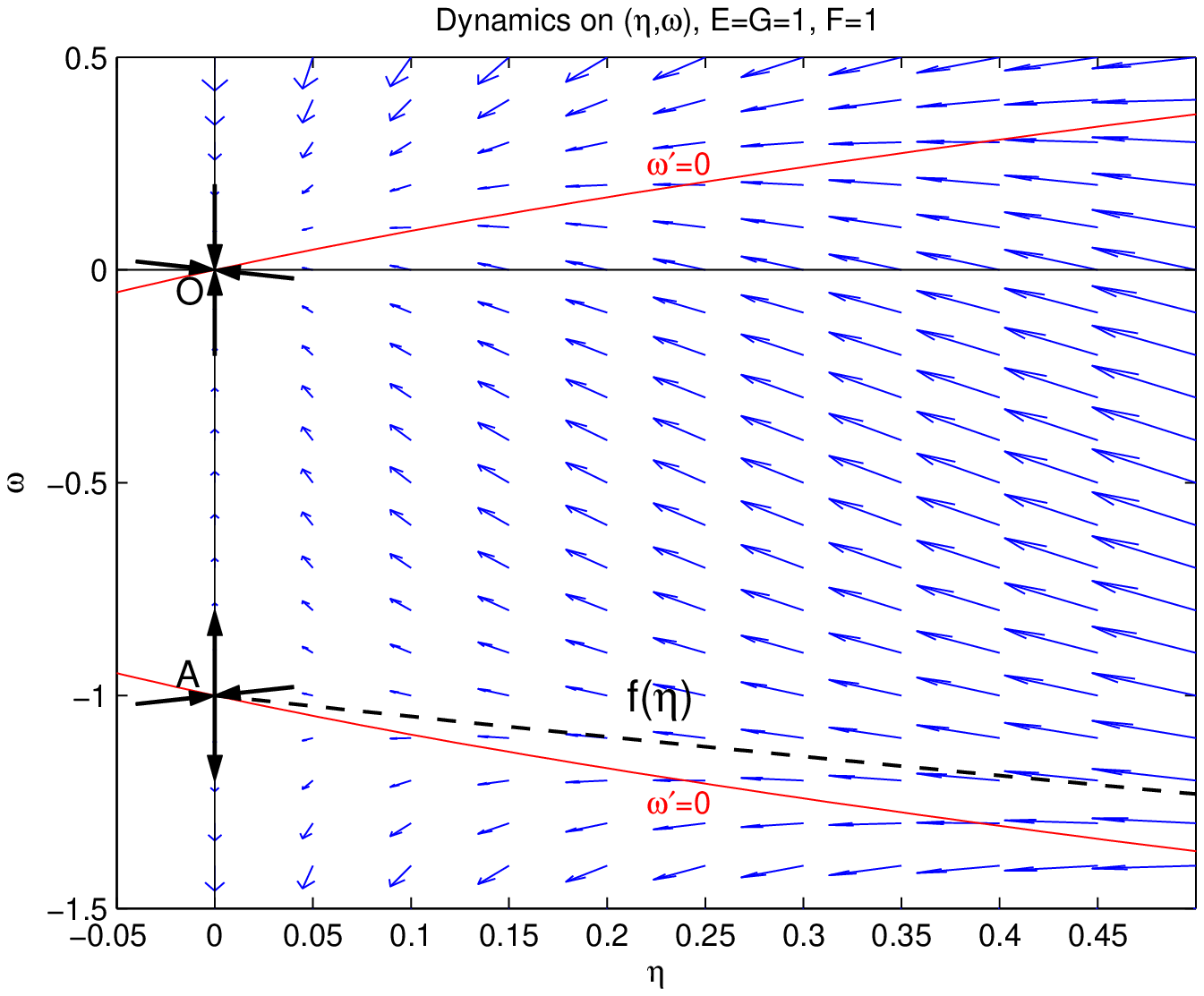}
\includegraphics[width=.49\linewidth]{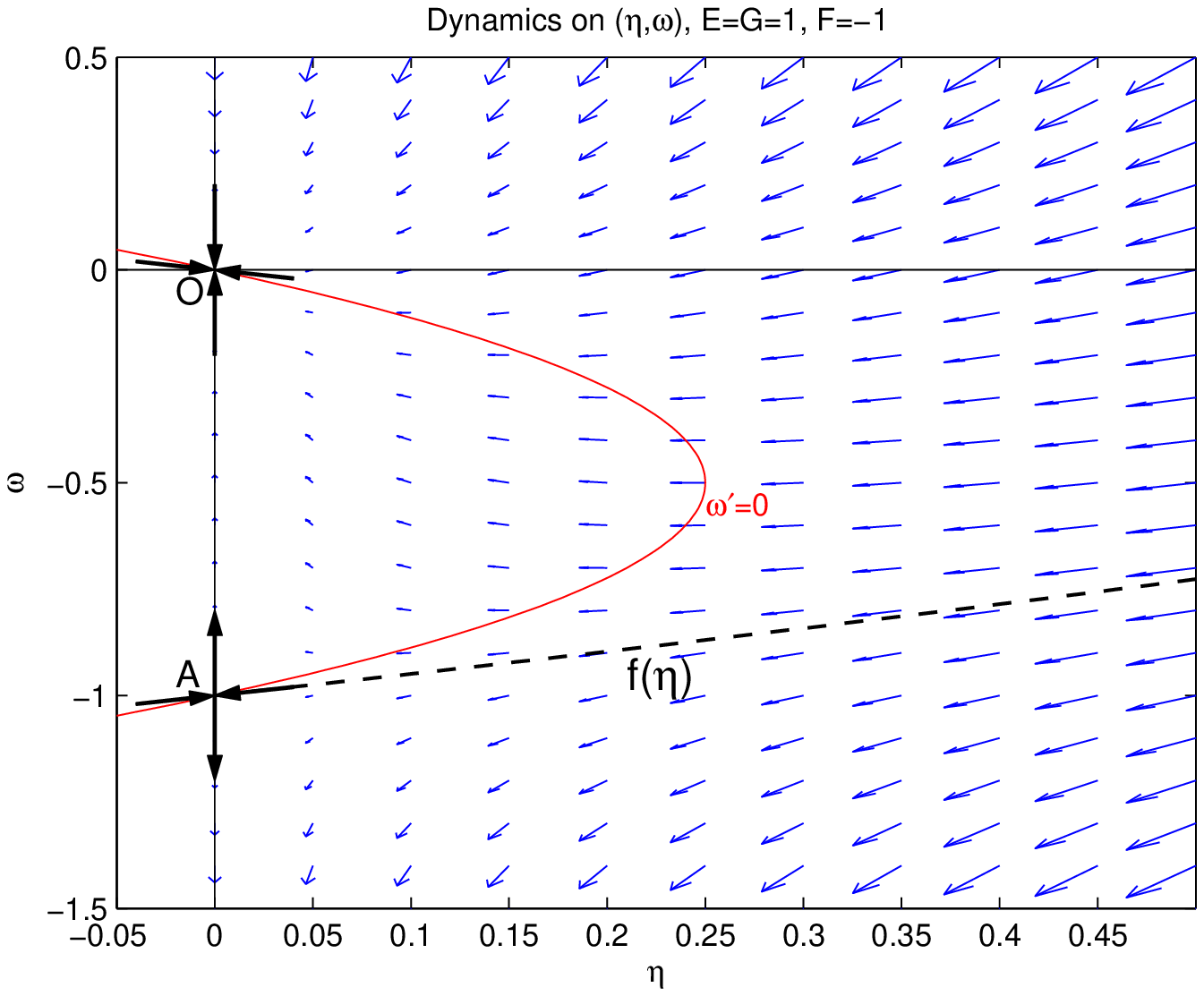}\hspace*{-0.5cm}
\caption{Phase plane of $(\eta,\omega)$ and critical thresholds for
  $F>0$(left) and $F<0$(right)}\label{fig:phase}
%\end{centering}
\vspace*{3.5cm}
\end{figure}

Along the line, clearly we have
\[f'(x)=\frac{d\omega}{d\eta}=\frac{\frac{d}{dt}\omega}{\frac{d}{dt}\eta}
=\frac{-\omega^2-E\omega+F\eta}{-G\eta}=\frac{-f^2(x)-Ef(x)+Fx}{-Gx}.\]
When $x\to 0$, we have
$ f'(0)=\lim_{x\to0}\frac{-f^2(x)-Ef(x)+Fx}{-Gx}
=\frac{-Ef'(0)-F}{G}$ and (\ref{eq:1Df}) follows.
\end{proof}

The following lemma states the relationship between the solution of
the equality system \eqref{eq:omegaeta} and the inequality system
\eqref{eq:lambdadv}. It allows us
to extend the critical thresholds result to the
inequality system.

\begin{lem}[A comparison principle]\label{thm:compare}
Let $( d,\Vu)$ satisfy \eqref{eq:lambdadv}, and $(\omega,\eta)$
satisfy \eqref{eq:omegaeta} with $E, F$ defined as below.
$t_0\geq 0$ and $T\in(t_0,+\infty]$.
\begin{enumerate}
\item Suppose $\omega(t)\geq0$, for $t\in[t_0,T]$.
\begin{itemize}
\item[(1a)] Let $E=\gamma, F=C$. \ 
If $\displaystyle\begin{cases} d(t_0)\leq\omega(t_0)\\
  \Vu(t_0)\leq\eta(t_0)\end{cases}$, then
$\displaystyle\begin{cases} d(t)\leq\omega(t)\\
  \Vu(t)\leq\eta(t)\end{cases}$ for $t\in[t_0,T]$.
\item[(1b)] Let $E=\Gamma, F=-C$.
If $\displaystyle\begin{cases} d(t_0)\geq\omega(t_0)\\
  \Vu(t_0)\leq\eta(t_0)\end{cases}$, then
$\displaystyle\begin{cases} d(t)\geq\omega(t)\\
  \Vu(t)\leq\eta(t)\end{cases}$ for $t\in[t_0,T]$.
\end{itemize}
\item Suppose $\omega(t)\leq 0$, for $t\in[t_0,T]$.
\begin{itemize}
\item[(2a)] Let $E=\Gamma, F=C$.
If $\displaystyle\begin{cases} d(t_0)\leq\omega(t_0)\\
  \Vu(t_0)\leq\eta(t_0)\end{cases}$, then
$\displaystyle\begin{cases} d(t)\leq\omega(t)\\
  \Vu(t)\leq\eta(t)\end{cases}$ for $t\in[t_0,T]$.
\item[(2b)] Let $E=\gamma, F=-C$.
If $\displaystyle\begin{cases} d(t_0)\geq\omega(t_0)\\
  \Vu(t_0)\leq\eta(t_0)\end{cases}$, then
$\displaystyle\begin{cases} d(t)\geq\omega(t)\\
  \Vu(t)\leq\eta(t)\end{cases}$ for $t\in[t_0,T]$.
\end{itemize}
\end{enumerate}
\end{lem}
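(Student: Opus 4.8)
The plan is to reduce Lemma~\ref{thm:compare} to two decoupled one‑dimensional comparisons. I would first settle the $\Vu$‑component, which is independent of $d$ and of the case at hand: from $\Vu'\le-G\Vu$ and $\eta'=-G\eta$ one gets $(\eta-\Vu)'\ge-G(\eta-\Vu)$, so $e^{Gt}(\eta-\Vu)$ is non‑decreasing and the hypothesis $\Vu(t_0)\le\eta(t_0)$ propagates to $\Vu(t)\le\eta(t)$ on $[t_0,T]$. Since the velocity diameter $\Vu$ is non‑negative, also $\eta(t_0)\ge\Vu(t_0)\ge0$, hence $\eta(t)=\eta(t_0)e^{-G(t-t_0)}\ge0$ throughout. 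The two facts $0\le\Vu(t)\le\eta(t)$ are exactly what will be needed to sign the forcing term in the $d$‑equation.

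For the comparison of $d$ and $\omega$ I would treat the four cases uniformly. Set $\sigma=+1$ in cases (1a) and (2a) --- where the claim is $d\le\omega$ --- and $\sigma=-1$ in cases (1b) and (2b) --- where it is $d\ge\omega$ --- and let $w:=\sigma(\omega-d)$, so that $w(t_0)\ge0$ by hypothesis and the claim becomes $w(t)\ge0$ on $[t_0,T]$. Subtracting the relations in \eqref{eq:omegaeta} and \eqref{eq:lambdadv} and substituting $\omega-d=\sigma w$, $d=\omega-\sigma w$, a direct computation gives
\[
w'=-(\omega+d+p)\,w+\sigma(p-E)\,\omega+\sigma\bigl(F\eta-c\Vu\bigr).
\]
The crux is that the pairings of $E\in\{\gamma,\Gamma\}$ and $F\in\{C,-C\}$ with $\operatorname{sgn}\omega$ prescribed in the statement are precisely the ones that force the last two terms to be non‑negative. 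Indeed $F=\sigma C$ in all four cases, so $\sigma(F\eta-c\Vu)=C\eta-\sigma c\Vu\ge C\eta-C\Vu=C(\eta-\Vu)\ge0$ by the first step together with $|c|\le C$ and $\Vu\ge0$; and on the region $\{\omega\ge0\}$ one takes $E=\gamma$ when $\sigma=+1$ and $E=\Gamma$ when $\sigma=-1$ (and the opposite choice on $\{\omega\le0\}$), which makes $\sigma(p-E)$ carry the sign opposite to that of the region, so $\sigma(p-E)\omega\ge0$ using $\gamma\le p\le\Gamma$. Thus $w$ satisfies a linear differential inequality $w'+\beta(t)\,w=h(t)$ with $h\ge0$ and $\beta:=\omega+d+p$ bounded on compact subintervals of $[t_0,T]$.

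The argument then closes with an integrating factor: setting $\mu(t):=\exp\!\bigl(\int_{t_0}^{t}\beta(s)\,ds\bigr)>0$ we get $(\mu w)'=\mu h\ge0$, hence $\mu(t)w(t)\ge\mu(t_0)w(t_0)\ge0$ and therefore $w(t)\ge0$ on $[t_0,T]$; recalling the definition of $w$ this is $d\le\omega$ in cases (1a),(2a) and $d\ge\omega$ in cases (1b),(2b), which together with Step~1 completes the lemma. I expect the obstacles to be organizational rather than analytic. The first is bookkeeping: there are eight sign conditions to check (four cases, two forcing terms each), and the unified $\sigma$‑formulation above is what keeps them manageable. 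The second is that $\beta=\omega+d+p$ carries no definite sign and need not be bounded when $T=\infty$; this rules out the naive ``first crossing time $t_*$ with $w'(t_*)\ge0$'' argument (which in any case stumbles over the degenerate sub‑case $w'(t_*)=0$) and makes the integrating factor --- which needs only local integrability of $\beta$ and positivity of $\mu$ --- the natural tool. Everything else is routine.
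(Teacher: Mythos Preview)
Your proof is correct and follows the paper's core strategy: form the difference, derive a linear differential inequality for it, and propagate the initial sign. The paper proves only case~(1a), via a first-crossing contradiction argument (find $\tau$ with $\omega(\tau)-d(\tau)=0$ and $(\omega-d)'(\tau)<0$, then contradict the inequality), and leaves the remaining three cases to the reader. Your version is cleaner in two respects: the unified $\sigma$-parameterization handles all four cases at once, and the integrating factor sidesteps the degenerate possibility $w'(t_*)=0$ that the paper's crossing argument glosses over (at a first crossing one only gets $(\omega-d)'(\tau)\le0$, not strict inequality). One minor wording slip in your write-up: you say $\sigma(p-E)$ carries the sign \emph{opposite} to that of $\omega$, but in fact it carries the \emph{same} sign in each case (e.g.\ in~(1a), $\omega\ge0$ and $\sigma(p-E)=p-\gamma\ge0$; in~(2a), $\omega\le0$ and $\sigma(p-E)=p-\Gamma\le0$); your stated conclusion $\sigma(p-E)\omega\ge0$ is nevertheless correct.
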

\begin{proof}
We only prove (1a); the  other cases are similar.
Subtracting \eqref{eq:lambdadv} with \eqref{eq:omegaeta}, we get
\begin{eqnarray*}
\frac{d}{dt}(\omega- d)\geq-(\omega+ d)(\omega- d)
-p(\omega- d)+C(\eta-\Vu), \qquad 
\frac{d}{dt}(\eta-\Vu)\geq-G(\eta-\Vu).
\end{eqnarray*}

Suppose by contradiction $\Vu(t)>\eta(t)$ for some $t\in(t_0,T)$. As $\Vu,\eta$ are
continuous, there exists $\tau\in(t_0,t)$ such that $\eta(\tau)-\Vu(\tau)=0$
and $\frac{d}{dt}(\eta(\tau)-\Vu(\tau))<0$. This violates the second
inequality. So, $\Vu(t)\leq\eta(t)$ for all $t\in[t_0,T]$.
Similarly, suppose by contradiction $ d(t)>\omega(t)$ for some
$t\in(t_0,T)$. As $\Vu,\eta$ are
continuous, there exists $\tau\in(t_0,t)$ such that $ d(\tau)-\omega(\tau)=0$
and $\frac{d}{dt}( d(\tau)-\omega(\tau))<0$. Meanwhile,
$\Vu(\tau)\leq\eta(\tau)$.
This violates the first inequality. So, $ d(t)\leq\omega(t)$ for all $t\in[t_0,T]$.
\end{proof}

We can use this comparison principle to verify theorem \ref{thm:lambdadv}.
First, $d$ remains  bounded from above. Indeed, suppose by contradiction that $ d(t)\to+\infty$ as $t\to T$. Then, there exists a $t_0\in[0,T)$ such that $ d(t_0)>0$. Construct
$(\omega,\eta)$ by \eqref{eq:omegaeta} with $E=\gamma$, $F=C$ and with
initial values $\omega(t_0)= d(t_0)>0, \eta(t_0)=\Vu(t_0)$. But according to
proposition \ref{prop:equal}, $\omega(t)$ is bounded from above and
comparison principle (1a) implies that $ d(t)\leq\omega(t)$
is also upper bounded.
To prove that $d$ is bounded from below, we apply the same comparison
argument. If $\ThU(x)<0$, we use (2b). If $\ThU(x)\geq0$, we use
(1b). Details are left to reader. For lower threshold $\ThL$, we prove that $d(t)\to-\infty$ in a finite  time using comparison principle (2a). Again, we omit the details.

\subsection{Two-dimensional flock with fast alignment}
In this subsection, we invoke the fast alignment property
to derive critical thresholds determined by initial
quantities $(\I,\J,\K)$, which are more relaxed  than those in 
theorem \ref{thm:2DNF}.

First, rewrite system \eqref{eq:dqrs} coupled with fast decay property \eqref{eq:ffdv}.
\begin{subequations}\label{eq:dqrs2}
\begin{align}
d'+\frac{d^2+\eta^2}{2}& = -pd+(Q_{11}+Q_{22}),\label{eq:ffd}\\
q'+q(d+p)&=(Q_{11}-Q_{22})\Vu,\label{eq:ffq}\\
r'+r(d+p)&=Q_{12}\Vu,\label{eq:ffr}\\
s'+s(d+p)&=Q_{21}\Vu,\label{eq:ffs}\\
\frac{d}{dt}\Vu&=-G\Vu,\label{eq:ffdv2}
\end{align}
\end{subequations}
where $p\in[\gamma,\Gamma]$ and $|Q_{ij}|\leq c$ for $i,j=1,2$.
We now state the uniform boundedness result for the spectral gap $\eta$.
\begin{lem}\label{lem:gap2}
Let $b_0=\max\{|q(0)|,2|r(0)|,2|s(0)|\}$. Suppose there exists a
positive constant $\delta$ such that $d(t)\geq-\gamma+\delta$ for all
$t\geq 0$. Then there exists a threshold $\zeta=\zeta(V_0; \delta,B)$ such that if  $b_0\leq \zeta$, then $(q,r,s)$ are uniformly bounded,
$\max\{|q(0)|,2|r(0)|,2|s(0)|\}\leq B$.
\end{lem}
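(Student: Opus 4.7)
The plan is to exploit the fact that the $(q,r,s)$-subsystem \eqref{eq:ffq}--\eqref{eq:ffs} becomes a block of scalar linear ODEs once $d+p$ is controlled, and to combine this with the exponential decay of $V$ furnished by \eqref{eq:ffdv2}. The hypothesis $d(t)\geq-\gamma+\delta$ together with $p\geq\gamma$ gives a uniform damping $d(t)+p(t)\geq\delta>0$, while integration of \eqref{eq:ffdv2} yields $V(t)\leq V_0 e^{-Gt}$. Each of $q,r,s$ therefore satisfies a scalar equation of the form $\xi'+(d+p)\,\xi=\Psi(t)V(t)$ with uniform damping of order $\delta$ and with an exponentially decaying forcing, which is exactly the setting in which an integrating-factor (Duhamel) estimate delivers uniform-in-time control.

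Concretely, I would multiply \eqref{eq:ffq} by $\exp(\int_0^t(d+p)\,ds)$ and use $(d+p)\geq\delta$ together with $V\leq V_0 e^{-Gt}$ and $|Q_{11}-Q_{22}|\leq 2c$ to obtain
\[
|q(t)|\leq |q(0)|\,e^{-\delta t}+2cV_0\int_0^t e^{-\delta(t-\tau)}e^{-G\tau}\,d\tau,
\]
and repeat the same argument for $r$ and $s$; absorbing the factor $2$ present in $2|r|$ and $2|s|$ into the forcing constant produces exactly the same right-hand side. Setting $b(t):=\max\{|q(t)|,2|r(t)|,2|s(t)|\}$ one thus arrives at
\[
b(t)\leq b_0\,e^{-\delta t}+2cV_0\,I(t),\qquad I(t):=\int_0^t e^{-\delta(t-\tau)}e^{-G\tau}\,d\tau.
\]
A routine case split ($\delta\neq G$ versus $\delta=G$) shows that $I_\infty:=\sup_{t\geq 0}I(t)$ is finite, bounded by $1/|\delta-G|$ in the former case and by $1/(e\delta)$ in the latter.

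Combining these estimates gives $b(t)\leq b_0+2cV_0\,I_\infty$ for all $t\geq 0$, so defining the threshold
\[
\zeta(V_0;\delta,B):=B-2cV_0\,I_\infty(\delta,G),
\]
which is positive provided $V_0$ is small enough (equivalently, $B$ is large enough) relative to $c,\delta,G$, the assumption $b_0\leq\zeta$ forces $b(t)\leq B$ globally in time, as claimed. The individual steps are essentially routine; the main conceptual point is that the threshold $\zeta$ must degrade as $V_0$ grows, which is the price paid for decoupling the linear $(q,r,s)$-block from the scalar dynamics of $d$, and it explains why the smallness of the initial transverse data in theorem \ref{thm:2DCS} has to be coordinated with $V_0$. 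The genuinely delicate issue, self-consistent verification of the standing hypothesis $d(t)\geq -\gamma+\delta$, is not addressed by this lemma; it is handled in theorem \ref{thm:2DCS} via the scalar Riccati analysis of \eqref{eq:ffd} closed by the spectral-gap bound $|\eta|\leq\sqrt{2}\,B$ that the present lemma supplies.
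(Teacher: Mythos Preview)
Your argument is correct and rests on the same mechanism as the paper's proof: once $d+p\geq\delta$, each of $q,r,s$ obeys a scalar linear ODE with uniform damping $\delta$ and forcing bounded by $2cV$, and the exponential decay $V\leq V_0e^{-Gt}$ closes the estimate via Duhamel. The paper phrases this as passing to the majorant system $\omega'=-\delta\omega+2C\eta$, $\eta'=-G\eta$ and invoking a comparison principle, which is exactly your integrating-factor computation in disguise.

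The one substantive difference is sharpness. You bound $b_0e^{-\delta t}+2cV_0I(t)$ by the sum of the two separate suprema, obtaining the affine threshold $\zeta=B-2cV_0I_\infty$. The paper instead computes $\sup_{t\geq0}\bigl(\omega_0e^{-\delta t}+2CV_0I(t)\bigr)$ exactly as a function of $(\omega_0,V_0)$ and reads off the precise boundary of the sublevel set $\{\sup_t\omega(t)\leq B\}$, which is the explicit piecewise formula \eqref{eq:g}. Your threshold is strictly smaller (in particular it is already nonpositive once $V_0\geq B/(2cI_\infty)$, whereas the sharp $\zeta$ remains positive up to $V_0=(\delta/G)^{G/(\delta-G)}\delta B/(2C)$), so you establish the lemma as stated but not the explicit threshold that the paper records and later uses in Theorem~\ref{thm:2D}. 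If you want to recover \eqref{eq:g}, simply maximize your Duhamel expression over $t$ rather than splitting the two terms.
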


The details of the function $\zeta$ are given below
\begin{equation}\label{eq:g}
\zeta(x;\delta,B)=\begin{cases}
\displaystyle B & \displaystyle x\in\left[0,\frac{\delta B}{2C}\right]\\
\displaystyle \frac{B}{\delta-G}\left[-G\left(\frac{2C}{\delta B}x\right)^{\delta/G}
+\frac{2C}{\delta B}x\right]& \displaystyle x\in\left[\frac{\delta B}{2C},
\left(\frac{\delta}{G}\right)^{\frac{G}{\delta-G}}\frac{\delta
  B}{2C}\right],~~\delta\neq G\\
\displaystyle \frac{2C}{\delta}\left(1-\log\left(\frac{2C}{\delta
      B}x\right)\right)x&
\displaystyle x\in\left[\frac{\delta B}{2C}, \frac{\delta B e}{2C}\right],~~\delta=G
\end{cases}.
\end{equation}

Lemma \ref{lem:gap2} provides a region in phase space of  $(b_0, V_0)$
such that the spectral gap is uniformly bounded in all time.
From the definition of $\zeta$, we observe that, to guarantee a uniform
upper bound,  $B$, $V_0$ can not be too large. Given $\delta$ and
$B$, the upper bound of $V_0$ is
$\left(\frac{\delta}{G}\right)^{\frac{G}{\delta-G}}\frac{\delta
  B}{2C}$
,independent of the choice of $b_0$.
\begin{proof}
We prove the result for $q$. Consider  the
coupled system \eqref{eq:ffq} and \eqref{eq:ffdv2}. The corresponding
majorant system reads
$\omega'=-\delta\omega+2C\eta, \ \eta'=-G\eta$.
This system can be easily solved. Figure \ref{fig:twoDIJK}(a) shows the
dynamics of $(\eta,\omega)$. The filled area includes all initial
conditions such that $\omega(t)\leq B$ for all $t\geq 0$. The area is
governed by a function $g$. A simple computation yields an explicit
expression of $g$, which is stated in \eqref{eq:g}.

A comparison argument enable us to connect the equality system with
the inequality system, which says
\[\text{If}~~\begin{cases}
|q(0)|\leq\omega(0)\\ V_0\leq\eta(0)
\end{cases}~~~~\text{then}~~~~\begin{cases}
|q(t)|\leq\omega(t)\\ \Vu(t)\leq\eta(t)\end{cases},~~~\text{for all}~t\geq0.\]
Therefore, $|q|$ is bounded by $B$ uniformly in time as long as
$(V_0,|q(0)|)$ lies inside the area, i.e. $|q(0)|\leq
g(V_0)$. Similarly, we prove for $r$ and $s$ which ends the proof.
\end{proof}

\begin{figure}[ht]
\vspace*{-3.8cm}
\begin{centering}
\hspace*{0.6cm} \includegraphics[width=.49\linewidth]{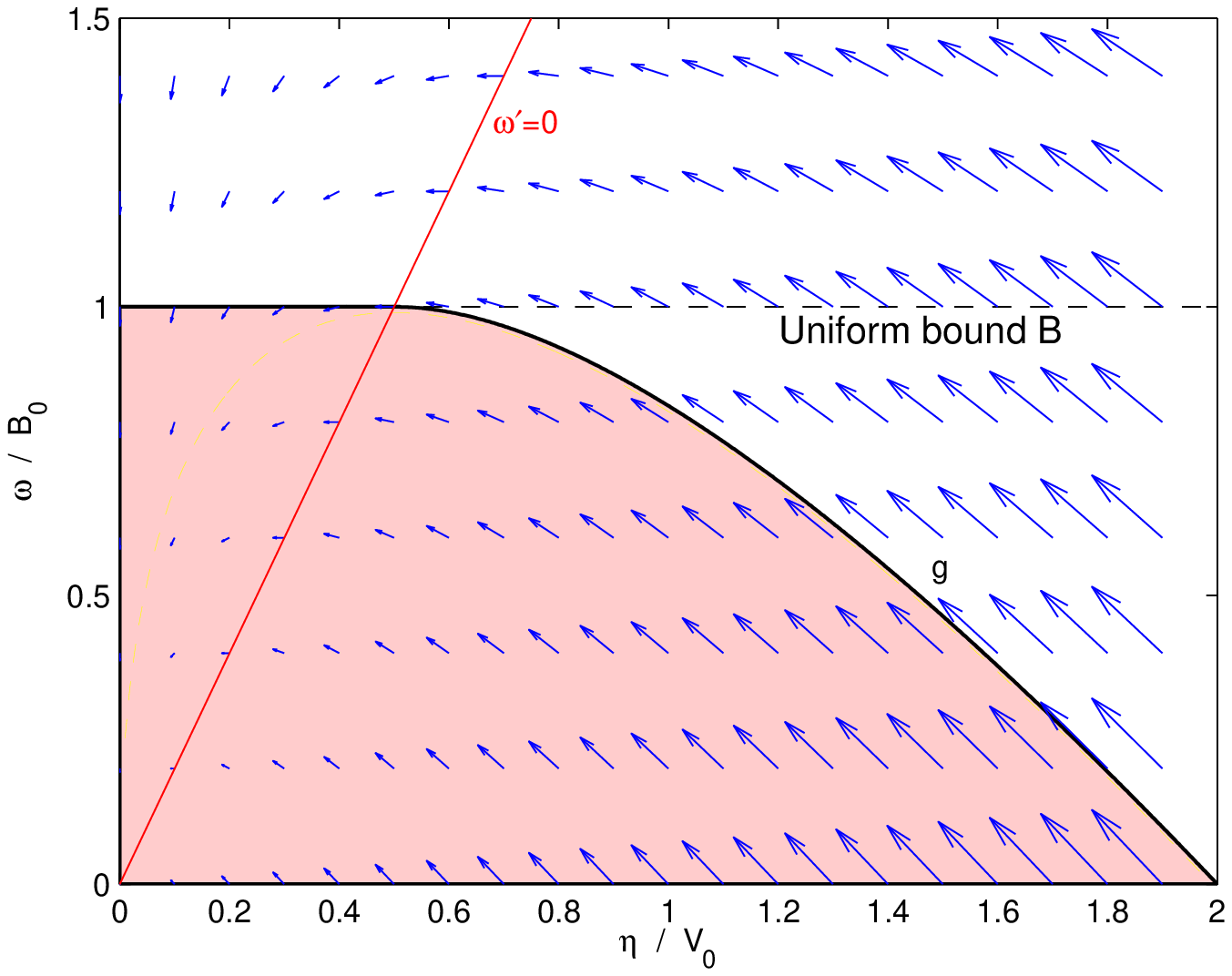}
\includegraphics[width=.49\linewidth]{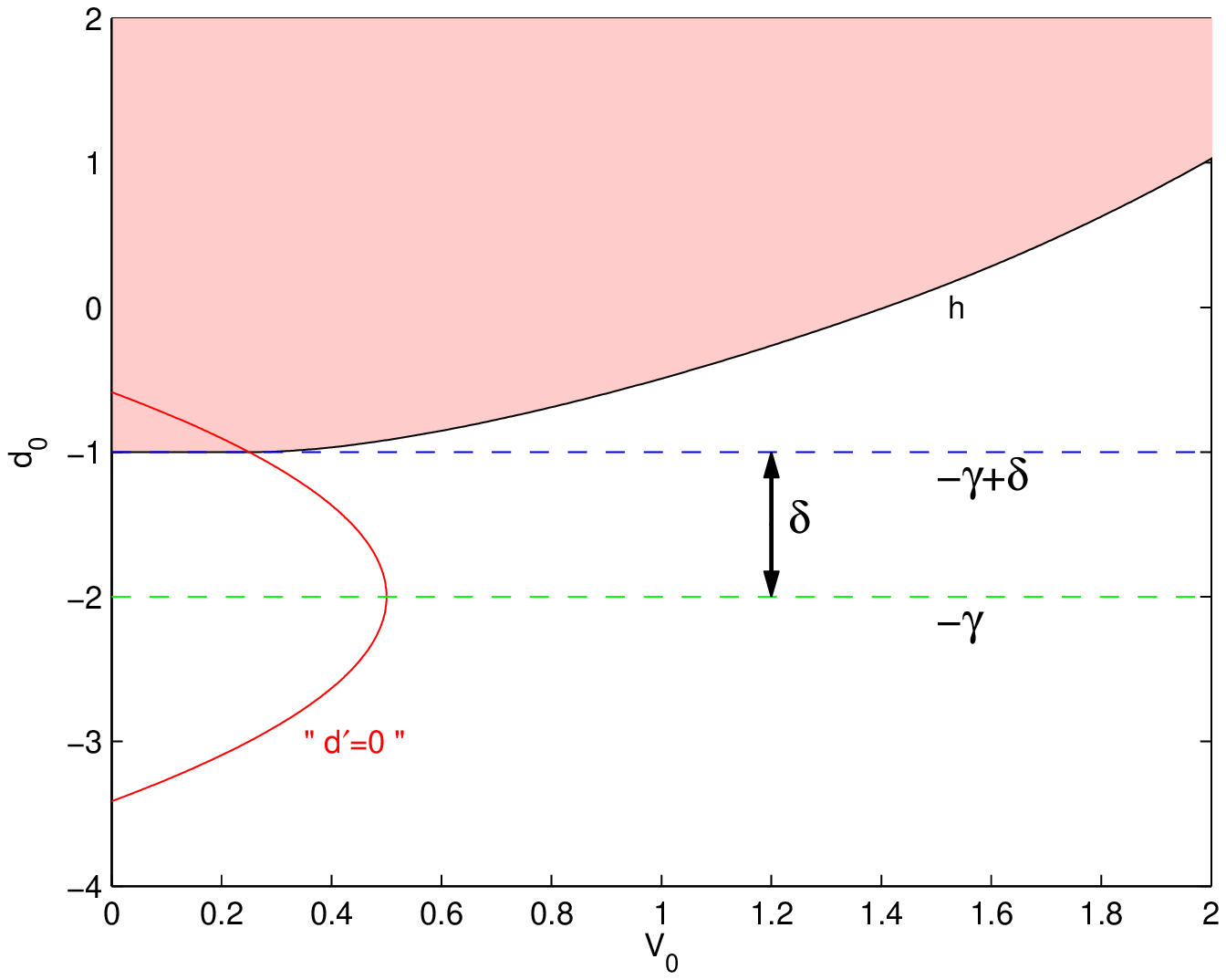}\hspace*{-0.8cm}
\end{centering}
\vspace*{4.2cm}
\caption{(a). Dynamics of the system $(\eta,\omega)$\quad
(b). Dynamics of the system $(\Vu,d)$}\label{fig:twoDIJK}
\end{figure}

Next, for given $\delta$ and $B$, we consider the coupled system \eqref{eq:ffd} and
\eqref{eq:ffdv2} and find the region of $(V_0,d(0))$ such that
$d(t)\geq -\gamma+\delta$.

\begin{prop}\label{prop:IJ}
Suppose there exists a $B$ such that $|\eta(t)|\leq B \leq \gamma/\sqrt{2}$ for $t\geq0$ and let  $\delta\in(0,\sqrt{\gamma^2-2B^2}]$.
Then  there exists a threshold $\ThU= \ThU (V_0;\delta,B)$ such that 
if $d(0)\geq \ThU$, then $d(t)$ remains uniformly bounded in time, and
$d(t)\geq-\gamma+\delta$ for  $t\geq0$.
The upper threshold  $\ThU$ is  defined implicitly
\begin{subequations}\label{eq:h}
\begin{align}
\ThU(x;\delta,B)=&-\gamma+\delta,\qquad x\in\left[0,
\displaystyle \frac{\gamma^2-\delta^2-2B^2}{4C}\right)\\
\displaystyle\ThU'(x;\delta,B)=&\begin{cases}
\displaystyle\frac{\ThU^2(x)+2\gamma\ThU (x)+4Cx+2B^2}{2Gx}&
\text{if}~~\ThU (x)<0,\\
\displaystyle\frac{\ThU^2(x)+2\Gamma \ThU (x)+4Cx+2B^2}{2Gx}&
\text{if}~~\ThU (x)\geq0,
\end{cases}
\displaystyle\quad x\in\left [\frac{\gamma^2-\delta^2-2B^2}{4C},+\infty\right).
\end{align}
\end{subequations}
\end{prop}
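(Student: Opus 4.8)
The plan is to run, with one extra ingredient, the same reduction used for the scalar fast-alignment system \eqref{eq:lambdadv} in Theorem~\ref{thm:lambdadv}: reduce the coupled $(d,V)$ dynamics to a planar majorant of the type studied in Proposition~\ref{prop:equal}, locate its separatrix, and transfer the conclusion back through a comparison principle. The only new feature relative to \eqref{eq:lambdadv} is the quadratic term $\tfrac12\eta^2$ on the left of \eqref{eq:ffd}, which the standing bound on the spectral gap lets us absorb into the forcing. Along particle paths, \eqref{eq:ffd}--\eqref{eq:ffdv2} read
\[
 d'=-\tfrac12 d^2-pd-\tfrac12\eta^2+V(Q_{11}+Q_{22}),\qquad \tfrac{d}{dt}V=-GV,
\]
with $p\in[\gamma,\Gamma]$ and $|Q_{11}+Q_{22}|\le 2C$; bounding $\tfrac12\eta^2$ by the constant $B^2$ coming from the hypothesis on $\eta$ (this is where the $B$-dependence of \eqref{eq:h} enters), and splitting according to the sign of $d$ so that $-pd\ge-\gamma d$ when $d<0$ and $-pd\ge-\Gamma d$ when $d\ge0$, one gets the differential inequality
\[
 d'\ \ge\ -\tfrac12 d^2-Ed-2CV-B^2,\qquad E=\gamma\ \text{for}\ d<0,\quad E=\Gamma\ \text{for}\ d\ge0,
\]
which is of exactly the form handled in Theorem~\ref{thm:lambdadv}/Proposition~\ref{prop:equal}, now carrying the constant $-B^2$ in the source; here $x$ in \eqref{eq:h} plays the role of $V_0$ and the ``$\eta$'' of \eqref{eq:omegaeta} the role of $V(t)$.

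Second, I would set up the majorant together with the comparison step. Define $(\omega,\eta)$ by $\omega'=-\tfrac12\omega^2-E\omega-2C\eta-B^2$, $\eta'=-G\eta$, with $E=\gamma$ while $\omega<0$ and $E=\Gamma$ while $\omega\ge0$, and $\omega(0)=d(0)$, $\eta(0)=V_0$. A continuity argument identical to the proof of Lemma~\ref{thm:compare} --- handling the switch at $\omega=0$ exactly as in the proof of Theorem~\ref{thm:lambdadv}, i.e.\ invoking the branch $E=\gamma$ below $\omega=0$ and $E=\Gamma$ above --- yields $V(t)\le\eta(t)$ and $d(t)\ge\omega(t)$ for all $t\ge0$. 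Moreover $d$ is automatically bounded from above (compare, as in Theorem~\ref{thm:lambdadv}, with the companion majorant in which $-2C\eta$ is replaced by $+2C\eta$), so the whole claim reduces to a lower bound $\omega(t)\ge-\gamma+\delta$.

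Third comes the phase-plane analysis, as in Proposition~\ref{prop:equal}. Since $\delta\le\sqrt{\gamma^2-2B^2}<\gamma$, the line $\{\omega=-\gamma+\delta\}$ lies in the region $\omega<0$, and there $\omega'=\tfrac12(\gamma^2-\delta^2-2B^2)-2C\eta\ge0$ precisely when $\eta\le x_0$, with $x_0=\tfrac{\gamma^2-\delta^2-2B^2}{4C}$ the value appearing in \eqref{eq:h}; the hypotheses $B\le\gamma/\sqrt2$ and $\delta\le\sqrt{\gamma^2-2B^2}$ are exactly what guarantee $x_0\ge0$. This produces the horizontal part of the threshold: if $V_0\le x_0$ and $d(0)\ge-\gamma+\delta$ then, $\eta$ being decreasing, the half-plane $\{\omega\ge-\gamma+\delta\}$ is forward-invariant, hence $d(t)\ge\omega(t)\ge-\gamma+\delta$ for all $t$. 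For $V_0>x_0$ the threshold is the trajectory of the majorant through the corner $(x_0,-\gamma+\delta)$ continued to $\eta>x_0$; dividing the two ODEs gives $\tfrac{d\omega}{d\eta}=\tfrac{\omega'}{\eta'}=\tfrac{\omega^2+2E\omega+4C\eta+2B^2}{2G\eta}$, which is \eqref{eq:h}, with the $E=\gamma$ branch while $\ThU<0$ and the $E=\Gamma$ branch after $\ThU$ crosses $0$; the numerator of \eqref{eq:h} vanishes at the corner, so $\ThU'(x_0^+)=0$ and the two pieces paste continuously. Trajectories starting above this separatrix reach $\{\omega\ge-\gamma+\delta,\ \eta\le x_0\}$ and stay trapped there --- giving the uniform bound on $d$ and $d(t)\ge-\gamma+\delta$ --- while those starting below it cross $\omega=-\gamma+\delta$ and, as in the divergent alternative of Proposition~\ref{prop:equal}, run off to $-\infty$. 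Transferring through the comparison principle completes the proof.

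The step I expect to be the main obstacle is the one already present in Theorem~\ref{thm:lambdadv}: the non-smoothness caused by the dichotomy $E=\gamma$ versus $E=\Gamma$. One must make the comparison principle and the separatrix construction consistent across $\omega=0$, and check that the piecewise-defined $\ThU$ in \eqref{eq:h} is $C^1$ where its branch changes. By contrast, the new constant $-B^2$ is benign: it merely shifts the relevant rest point of the $\eta=0$ flow from $\omega=-\gamma$ to $\omega=-\gamma+\sqrt{\gamma^2-2B^2}$, which is why $-\gamma+\delta$ with $\delta\le\sqrt{\gamma^2-2B^2}$ is the natural target for $d$ and why the smallness requirement $B\le\gamma/\sqrt2$ enters.
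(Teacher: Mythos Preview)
Your proposal is correct and follows precisely the route the paper indicates: the paper omits the details and simply says that Proposition~\ref{prop:IJ} ``can be easily proved by analyze on the equality system and a comparison rule,'' referring back to the one-dimensional argument (Proposition~\ref{prop:equal} and Lemma~\ref{thm:compare}) with the spectral-gap term $\tfrac12\eta^2$ absorbed as a constant into the forcing. Your phase-plane analysis, the identification of the corner $x_0=(\gamma^2-\delta^2-2B^2)/(4C)$, and the sign-based switch between $E=\gamma$ and $E=\Gamma$ all match the structure of \eqref{eq:h} exactly; you have supplied the details the paper leaves out.
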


Similar to the one-dimensional case, proposition \ref{prop:IJ} can be
easily proved by analyze on the equality system and a comparison rule.
Figure \ref{fig:twoDIJK}(b) shows the area of $(V_0,d(0))$ such that
$d(t)$ is lower bounded by $-\gamma+\delta$ for all time. The area is
governed by $h$ defined in \eqref{eq:h}. We omit the details of the proof.

\begin{thm}[2D Critical Thresholds]\label{thm:2D}
Consider the two-dimensional CS system \eqref{eq:main_2DCS}  with majorant systems involving the constants $(\gamma,\Gamma,C,G)$ given  in \eqref{eq:constff}.
 If there exists $(\delta, B)$ such that
  $\delta^2+2B^2\leq\gamma^2$,
and the initial profiles $(\I,\J,\K)$ satisfies

\noindent
(i) $\K\leq \zeta(\I;\delta,B)$, where $\zeta$ is defined in \eqref{eq:g}, and
(ii) $\J\geq \ThU (\I;\delta,B)$, where $\ThU$ is defined in \eqref{eq:h},\newline
then $|\grad\u(x,t)|$ remains bounded all  $(x,t)\in supp(\rho)$.
\end{thm}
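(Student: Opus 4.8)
The plan is to reduce the PDE statement to the finite-dimensional majorant system \eqref{eq:dqrs2} along particle paths, and then close a bootstrap that couples Lemma \ref{lem:gap2} (control of the spectral gap $\eta$) with Proposition \ref{prop:IJ} (a one-sided bound on $d=\div\u$). First I would fix a characteristic $\x(t)=\x(t;\x_0)$ emanating from $\x_0\in\mathrm{supp}(\rho_0)$. So long as the solution is strong, Theorem \ref{thm:flock} supplies the uniform flocking diameter $\Vx(t)\le D$, and then Proposition \ref{prop:fit} together with the fast-alignment decay \eqref{eq:ffdv} shows that $(d,q,r,s,V)$ along $\x(t)$ solves the majorant system \eqref{eq:dqrs2} with the Cucker--Smale constants $(\gamma,\Gamma,C,G)$ of \eqref{eq:constff}. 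The initial data transcribe, uniformly over all paths, into $d(0)\ge\J\ge\ThU(\I;\delta,B)$, $\max\{|q(0)|,2|r(0)|,2|s(0)|\}\le\K\le\zeta(\I;\delta,B)$, and $V(0)\le\I$; here $\J,\K$ are the worst-case inf/sup over $\mathrm{supp}(\rho_0)$ while $\Vu$ is path-independent, so the reduction is simultaneous over the whole fan of characteristics. Lastly I would record that the standing hypothesis $\delta^2+2B^2\le\gamma^2$ is exactly what makes $B\le\gamma/\sqrt2$ and $\delta\le\sqrt{\gamma^2-2B^2}$, which keeps the relevant Riccati equilibrium at $-\gamma+\delta$ real and hence $\ThU(\cdot;\delta,B)$ well-defined on $[0,\infty)$, so that Proposition \ref{prop:IJ} is applicable with these parameters.

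Next comes the bootstrap. Let $\mathcal I\subseteq[0,\infty)$ be the set of $T$ such that, on $[0,T]$ and along every characteristic, (A)~$d(t)\ge-\gamma+\delta$ and (B)~$\max\{|q(t)|,2|r(t)|,2|s(t)|\}\le B$. Since $\ThU(\cdot;\delta,B)\ge-\gamma+\delta$ and $\zeta(\cdot;\delta,B)\le B$ by \eqref{eq:h} and \eqref{eq:g}, the set $\mathcal I$ contains $0$, and it is closed by continuity of the flow; I claim $\mathcal I=[0,\infty)$. On $[0,T^\ast]$ with $T^\ast:=\sup\mathcal I$, condition (A) is precisely the hypothesis of Lemma \ref{lem:gap2}, whose conclusion is (B); since then $\eta^2=q^2+4rs\le 2B^2$ on $[0,T^\ast]$, equation \eqref{eq:ffd} reduces to a scalar Riccati-type majorant for $d$ coupled to the fast decay \eqref{eq:ffdv}, which is exactly the setting of Proposition \ref{prop:IJ}, and since $d(0)\ge\ThU(\I;\delta,B)$ its conclusion is (A), together with an upper bound on $d$. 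Thus (A) and (B) both hold on $[0,T^\ast]$, and because the threshold curves $\zeta$ and $\ThU$ are the separatrices (invariant curves) of the corresponding planar majorant flows, sub-threshold data generate orbits that stay in the \emph{open} admissible region, touching the critical levels $B$ and $-\gamma+\delta$, if at all, only at isolated tangency points past which the flow strictly re-enters the interior. Hence the joint conditions cannot fail just beyond $T^\ast$, forcing $T^\ast=\infty$.

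Once (A) and (B) hold for all $t\ge0$ along every characteristic, the conclusion follows: $q,r,s$ are bounded by $B$, and Proposition \ref{prop:IJ} bounds $d$ from above and below. Since the entries of $M=\grad_\x\u$ are $M_{11}=(d+q)/2$, $M_{22}=(d-q)/2$, $M_{12}=r$, $M_{21}=s$, this yields a bound on $M(\x(t),t)$ depending only on $\I,\J,\K,\phi$ and $D$, hence only on the initial data, uniformly in $\x_0$. Therefore $\|\grad_\x\u(\cdot,t)\|_{L^\infty(\mathrm{supp}(\rho))}$ stays finite for all time; this also closes the bootstrap, since the solution then remains strong, legitimizing the appeal to Theorem \ref{thm:flock}. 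As a by-product, $(\rho,\u)$ converges to a flock.

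The step I expect to be the main obstacle is the coupled continuity argument: Lemma \ref{lem:gap2} is conditional on (A) while Proposition \ref{prop:IJ} is conditional on (B), so neither can be invoked in isolation, and one has to verify carefully, exploiting that $\zeta$ and $\ThU$ are the separatrices and that $\delta^2+2B^2\le\gamma^2$ makes the relevant majorant equilibria real, that the admissible region is forward-invariant with enough slack to propagate the two open conditions past every finite time. By comparison, the reduction to \eqref{eq:dqrs2}, the spectral-gap bound $\eta^2\le 2B^2$, and the assembly over the family of characteristics are routine given the results already established.
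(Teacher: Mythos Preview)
Your proposal is correct and follows essentially the same route as the paper. The paper does not write out a separate proof of Theorem~\ref{thm:2D}; it is meant to follow by combining Lemma~\ref{lem:gap2} and Proposition~\ref{prop:IJ} via the same continuity/bootstrap device used explicitly in the proof of Proposition~\ref{prop:2D} (``violation of the first condition contradicts the proposition; violation of the second contradicts the lemma''), and your argument is precisely a careful, more detailed version of that coupling.
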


\begin{rem}
The theorem  guarantees the boundedness of $\grad\u$ provided 
 $\K$ is not too large and $\J$ is not  too negative.
\end{rem}

%%%%%%%%%%%%%%%%%%%%%%%%%%%%%%%%%%%%%%%
\section{Strong solutions in the presence of vacuum}\label{sec:vacuum}
%%%%%%%%%%%%%%%%%%%%%%%%%%%%%%%%%%%%%%%
In this section, we discuss the boundedness of $\grad_\x\u$ when
$(\x,t)\not\in supp (\rho)$. The result allows us to study the system
in whole space, without worrying about the free boundary. It
also extends the global existence result to initial density which is supported over disconnected blobs.
 In the case of standard local models, lack of any relaxation inside the vacuum enables the solution to form  shock discontinuities in
finite time. In the present setup, however, nonlocal alignment prevents the formation of  shock discontinuities.

\subsection{Dynamics inside the vacuum}
Consider the dynamics of $\grad_\x\u$ \eqref{eq:M} for $\x\not\in
supp(\rho_0)$. Define maximum
diameter  of the velocity field between a point in the whole space and
a point in the non-vacuum area
\[V^\infty(t):=\sup\{|\u(\x,t)-\u(\y,t)|,~\x\in\RR^n,~\y\in supp(\rho(\cdot,t))\}.\]
and the distance between $\x$ and the non-vacuum region
$L(\x,t):=dist(\x, supp (\rho(\cdot,t))$. 
We have the
following bounds (to be compared with those in proposition \ref{prop:fit}).
\begin{prop}[Bounds inside the vacuum]\label{prop:fitvacuum}
Suppose $(\rho,\u)$ is a solution of system
\eqref{eq:main}. Then, for any $\x\not\in supp(\rho(t))$,
\begin{eqnarray*}
&\displaystyle\left|\int_{\RR^n}\partial_{x_j}
\phi(|\x-\y|)(u_i(\y,t)-u_i(\x,t))\rho(\y,t)d\y\right|\leq
V^\infty(0)\left|\phi'(L(\x,t))\right|m,\quad
i,j=1,\cdots,n,\\
&\displaystyle
\phi(L(\x,t)+D)m\leq\int_{\RR^n}\phi(|\x-\y|)\rho(\y,t)d\y\leq m.
\end{eqnarray*}
\end{prop}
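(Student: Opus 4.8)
The plan is to mimic the proof of Proposition~\ref{prop:fit}, with the flocking bounds $S(t)\le D$ and $V(t)\le V_0$ replaced by their vacuum counterparts, so that everything reduces to two facts. First, a purely geometric one: if $\x\notin supp(\rho(t))$ and $\y\in supp(\rho(t))$, then choosing a nearest point $\y_\x\in supp(\rho(t))$ to $\x$ gives $|\x-\y|\le|\x-\y_\x|+|\y_\x-\y|\le L(\x,t)+S(t)\le L(\x,t)+D$, the last step by Theorem~\ref{thm:flock} (whose hypothesis \eqref{eq:phi1} is automatic under \eqref{eq:phi2}). Second, the bound $V^\infty(t)\le V^\infty(0)$ for all $t\ge0$; this is the heart of the matter and is taken up in the last paragraph. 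I should flag that the first displayed estimate, as stated with $|\phi'(L(\x,t))|$ rather than $\Lip{\phi}$, also uses that $r\mapsto|\phi'(r)|$ is non-increasing, which holds for the prototypical influence functions such as $\phi(r)=(1+r)^{-\alpha}$.

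Granting these two facts, the proposition follows quickly. For the lower bound on $\int\phi(|\x-\y|)\rho(\y,t)d\y$, monotonicity of $\phi$ together with $|\x-\y|\le L(\x,t)+D$ on $supp(\rho(t))$ gives $\int\phi(|\x-\y|)\rho(\y,t)d\y\ge\phi(L(\x,t)+D)\int\rho\,d\y=\phi(L(\x,t)+D)m$, while $\phi\le1$ gives the upper bound $m$. For the first estimate, restrict the $\y$-integral to $supp(\rho(t))$, bound $|\partial_{x_j}\phi(|\x-\y|)|\le|\phi'(|\x-\y|)|$, use $|\x-\y|\ge L(\x,t)$ and the monotonicity of $|\phi'|$ to replace $|\phi'(|\x-\y|)|$ by $|\phi'(L(\x,t))|$, and use $|u_i(\y,t)-u_i(\x,t)|\le|\u(\y,t)-\u(\x,t)|\le V^\infty(t)\le V^\infty(0)$ for $\y\in supp(\rho(t))$; the remaining $\int\rho\,d\y=m$ then gives the claimed bound $V^\infty(0)\,|\phi'(L(\x,t))|\,m$.

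The main obstacle is establishing $V^\infty(t)\le V^\infty(0)$, since the definition of $V^\infty$ is asymmetric --- one endpoint ranges over $\RR^n$, the other only over $supp(\rho(t))$ --- so the symmetric decay estimate of Proposition~\ref{prop:SDDI} does not apply directly. I would argue by a directional maximum principle: fix a unit vector $e$ and set $P_e(t):=\sup_{\x\in\RR^n}\langle e,\u(\x,t)\rangle$ and $p_e(t):=\min_{\y\in supp(\rho(t))}\langle e,\u(\y,t)\rangle$. Differentiating $\langle e,\u\rangle$ along the characteristic through a (near-)maximizer $X^*$ of $P_e$ gives $\frac{d}{dt}\langle e,\u(X^*)\rangle=\frac{1}{\Phi(X^*,t)}\int\phi(|X^*-\z|)\langle e,\u(\z)-\u(X^*)\rangle\rho(\z)\,d\z\le0$, because $\Phi>0$ ($\phi>0$ everywhere by \eqref{eq:phi2}, and $\Phi\equiv1$ in the CS case) and $\langle e,\u(\z)\rangle\le\langle e,\u(X^*)\rangle$ for $\z\in supp(\rho(t))\subset\RR^n$; hence $P_e$ is non-increasing, and the symmetric computation at a minimizer of $p_e$ (whose characteristic stays in $supp(\rho)$, which is transported by the flow) shows $p_e$ is non-decreasing. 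Therefore $P_e(t)-p_e(t)\le P_e(0)-p_e(0)$, and for given $\x\in\RR^n$, $\y\in supp(\rho(t))$, choosing $e=(\u(\x,t)-\u(\y,t))/|\u(\x,t)-\u(\y,t)|$ yields $|\u(\x,t)-\u(\y,t)|\le P_e(t)-p_e(t)\le P_e(0)-p_e(0)\le V^\infty(0)$, the last inequality because the maximizing and minimizing configurations at $t=0$ form an admissible pair in the definition of $V^\infty(0)$; taking the supremum over $\x,\y$ proves $V^\infty(t)\le V^\infty(0)$. The minor point that $P_e$ need not be attained is handled in the standard way, via near-maximizers (the dynamics being trivial far from $supp(\rho)$).
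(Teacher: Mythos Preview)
Your proposal is correct and follows essentially the same approach as the paper: the same geometric bound $|\x-\y|\le L(\x,t)+D$ via the flocking diameter, the same use of $\phi\le1$, and the same reduction of the first estimate to $V^\infty(t)\le V^\infty(0)$. The paper simply invokes ``maximum principle'' for this last step without elaboration, whereas you supply a full directional argument; you also correctly flag the implicit hypothesis that $r\mapsto|\phi'(r)|$ is non-increasing, which the paper uses silently.
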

\begin{proof}
For the first inequality,
\begin{align*}
&\left|\int_{\RR^n}\partial_{x_j}\phi(|\x-\y|)
(u_i(\y,t)-u_i(\x,t))\rho(\y,t)d\y\right|\leq
\int_{supp(\rho(t))}|u_i(\y,t)-u_i(\x,t)|\left|\partial_{x_j}\phi(|\x-\y|)\right|
\rho(\y,t)d\y\\
&\qquad\leq
V^\infty(t)|\phi'(L(\x,t))|\int_{supp(\rho(t))}\rho(y,t)d\y\leq
V^\infty(0)|\phi'(L(\x,t))|m.
\end{align*}
The last inequality is valid due to maximum principle.
For the second inequality,
$\displaystyle \int_{\RR^n}\phi(|\x-\y|)\rho(\y,t)d\y\leq\|\phi\|_{L^{\infty}}m=m$.
On the other hand, as $(\rho,\u)$ converges to a flock, $\Vx(t)$ is
uniformly bounded by $D$, defined in \eqref{eq:D}.
Hence, $\max_{y\in supp(\rho(t))} dist(\x,\y)\leq L(x,t)+D$. It yields
\[\int_{\RR^n}\phi(|\x-\y|)\rho(\y,t)d\y=
\int_{supp(\rho(t))}\phi(|\x-\y|)\rho(\y,t)d\y\geq\phi(L(x,t)+D)m.\]
\end{proof}

\begin{rem}
The key estimate in proposition \ref{prop:fitvacuum}, which distinguishes itself from local system, the is the positive
lower bound of $\phi\star\rho$. 
\end{rem}
Next, we turn to discuss the criterion to guarantee the boundedness of
$\|\grad_\x\u(\cdot,t)\|_{L^\infty}$ in whole space, using similar
technique as Section \ref{sec:nonvacuum}. For simplicity, we
focus on one-dimensional case. Similar result can be easily
established for two dimensions.

\begin{lem}\label{lem:vacuum}
Assume the initial configuration is such that $V^\infty(0)$ and $u_{0x}$ satisfy
\begin{subequations}
\begin{eqnarray}\label{eq:phi2p}
V^\infty(0) & \leq &\inf_{r\geq 0}\left[\frac{m\phi^2(r+D)}{4|\phi'(r)|+2|\phi'(r+D)|}\right],\\
u_{0x}(x) &\geq & -\frac{m}{2}\phi(L(x,0)+D),~~~\text{for
}x\not\in supp(\rho_0).\label{eq:crit}
\end{eqnarray}
\end{subequations}
Then, $u_x(x,t)$ remains bounded for all time for $(x,t)\not\in supp(\rho)$.
\end{lem}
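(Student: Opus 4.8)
The plan is to propagate a uniform bound on $u_x$ along characteristics that start in the vacuum, mimicking the scalar analysis of Section~\ref{sec:nonvacuum} but replacing the bounds of Proposition~\ref{prop:fit} with their vacuum counterparts from Proposition~\ref{prop:fitvacuum}. Fix $x\notin supp(\rho_0)$ and let $X(t)$ solve $\dot X=u(X,t)$, $X(0)=x$. Since $u(\cdot,t)$ is Lipschitz, the flow map is a homeomorphism transporting the density, so $X(t)\notin supp(\rho(t))$ and $L(t):=dist(X(t),supp(\rho(t)))\geq 0$ for all $t$. Along this path $d(t):=u_x(X(t),t)$ obeys the Riccati-type equation $d'=-d^2-p(t)d+Q(t)$ (the $n=1$ case of \eqref{eq:Mfull}) with $p(t)=\int_{\RR}\phi(|X(t)-y|)\rho(y,t)dy$ and $Q(t)=\int_{\RR}\partial_x\phi(|X(t)-y|)(u(y,t)-u(X(t),t))\rho(y,t)dy$; Proposition~\ref{prop:fitvacuum} then gives $p(t)\geq m\phi(L(t)+D)$ and $|Q(t)|\leq mV^\infty(0)|\phi'(L(t))|\leq mV^\infty(0)\Lip{\phi}<\infty$.

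The upper bound on $d$ is immediate: for $d\geq0$ we have $-p(t)d\leq0$, hence $d'\leq-d^2+mV^\infty(0)\Lip{\phi}$ and so $d(t)\leq\max\{u_{0x}(x),\sqrt{mV^\infty(0)\Lip{\phi}}\}$, uniformly in $t$ and in the chosen characteristic. For the lower bound — the crux — set $g(t):=\tfrac m2\phi(L(t)+D)>0$, so that hypothesis \eqref{eq:crit} reads $d(0)\geq-g(0)$, and I would show this is preserved. The preliminary ingredient is a time-Lipschitz bound on $L$: since $supp(\rho(t))$ is carried by the flow, $L(X(t),t)$ changes at a rate no larger than the relative speed of $X(t)$ and its nearest support point, i.e.\ $|L'(t)|\leq V^\infty(t)\leq V^\infty(0)$ (an envelope/Danskin-type argument to cope with the non-smoothness of the distance function). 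Granting this, $g'(t)=\tfrac m2\phi'(L(t)+D)L'(t)\geq-\tfrac m2|\phi'(L(t)+D)|V^\infty(0)$, while on the contact set $d=-g$ one has $-p\,d=pg\geq2g\cdot g$ because $p\geq m\phi(L+D)=2g$, hence $d'\geq g^2+Q$. Therefore, on $\{d=-g\}$,
\[
\frac{d}{dt}(d+g)\ \geq\ g^2+Q+g'\ \geq\ \frac{m^2}{4}\phi^2(L+D)-mV^\infty(0)\Bigl(|\phi'(L)|+\tfrac12|\phi'(L+D)|\Bigr)\ \geq\ 0,
\]
the last step being exactly assumption \eqref{eq:phi2p} at $r=L(t)$. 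A standard continuity (barrier) argument, perturbing $g$ by $\varepsilon$ to handle the equality case, yields $d(t)\geq-g(t)\geq-\tfrac m2\phi(D)$ for all $t$; together with the upper bound this shows $u_x(x,t)$ stays bounded for all $(x,t)\notin supp(\rho)$.

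The step I expect to be the main obstacle is the rigorous control of $t\mapsto L(X(t),t)$: the distance to a moving, possibly irregular support is only Lipschitz in time, and one must justify $|L'|\leq V^\infty$ almost everywhere and check that this suffices inside the barrier inequality. This is exactly where the extra term $2|\phi'(\lambda+D)|$ in \eqref{eq:vacuumphi} originates, and it is the one genuinely new ingredient relative to the non-vacuum analysis of Section~\ref{sec:nonvacuum}; the rest of the argument is the scalar critical-threshold bookkeeping already carried out there.
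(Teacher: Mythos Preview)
Your proposal is correct and follows essentially the same route as the paper's own proof: the same Riccati equation along vacuum characteristics, the same barrier $-\tfrac{m}{2}\phi(L(t)+D)$, the same Lipschitz control $|L'(t)|\leq V^\infty(0)$ on the distance-to-support, and the same use of \eqref{eq:phi2p} to close the barrier inequality. The only cosmetic difference is that the paper passes through an intermediate majorant $\omega$ solving the worst-case ODE and then argues by contradiction, whereas you work directly with $d$ and the barrier $-g$; the computations and the role of the extra $2|\phi'(\lambda+D)|$ term are identical.
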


\begin{rem}
Condition \eqref{eq:phi2p} carries two aspects.
(i)  Slow decay at infinity. Suppose $\phi(r)\approx
r^{-\alpha}$ as $r\to\infty$. The right hand side is proportional to $r^{1-\alpha}$.
If $\phi$ decays fast with $\alpha>1$, \textit{i.e.} \eqref{eq:phi2}
is violated, the right hand side goes to 0. The condition can not be
achieved unless $u_0$ is a constant. A slow decay assumption on
$\phi$ is needed to make sure the condition is meaningful.
(ii) Behavior of  $V^\infty$ near the origin.
Take $r=0$, the condition reads
$ \displaystyle V^\infty(0)\leq\frac{m\phi^2(D)}{4\Lip{\phi}+2|\phi'(D)|}$.
This is equivalent to the thresholds of $\I$ in proposition
\ref{prop:1Dwoflock}, assuming $V^\infty(0)\lesssim V_0$.\newline
As for condition \eqref{eq:crit} --- it is satisfied automatically for large $|x|$.
As the matter of fact, when $|x|\to\infty$, \eqref{eq:crit} says that
$u_{0x}(x)\gtrsim-|x|^{-\alpha}$.
This is a consequence of $u_0\in L^\infty(\RR)$ and the fact
$\alpha\leq 1$.
\end{rem}

\begin{proof}[Proof of Lemma \ref{lem:vacuum}.]
Consider $(x,t)\not\in supp(\rho)$. It belongs to a characteristic
starting from $(x_0,0)$ where $x_0\not\in supp(\rho_0)$,
as long as $u_x$ is bounded. At this point, we have
$d'=-d^2-pd+Q$, with $p\in[\phi(L(x,t)+D)m,m]$ and $|Q|\leq V^\infty(0)|\phi'(L(x,t)|m$.

It is then sufficient to discuss the following majorant equation and use the
comparison principle to draw desired conclusion on $d$,
\[\omega'=-\omega^2-\phi(L(x,t)+D)m\omega-V^\infty(0)|\phi'(L(x,t)|m.\]

Condition \eqref{eq:phi2p} ensures that the right hand side has two
distinguished solutions.
In particular, if we pick $\omega=-\frac{1}{2}\phi(L(x,t)+D)m$, then
\[\omega'=\frac{1}{4}\phi^2(L(x,t)+D)m^2-V^\infty(0)|\phi'(L(x,t))|m
\stackrel{\eqref{eq:phi2p}}{\geq}\frac{1}{2}|\phi'(L(x,t)+D)|V^\infty(0) m>0.\]

Let $A(x_0)$ denote the area where
$\omega\geq-\frac{1}{2}\phi(L(x,t)+D)m$, and $(x,t)=(X(t),t)$ is a point on the
characteristic starting from $(x_0,0)$, namely
\[A(x_0):=\left\{(z,t)\left|z\geq-\frac{1}{2}(\phi(L(X(t),t)+D)m),
    t\geq 0\right.\right\}.\]
Its boundary $\partial A(x_0)$ reads
$\partial A(x_0)=\{(\gamma(t),t)|t\geq 0\}$ where
$\gamma(t)=-\frac{1}{2}\phi(L(X(t),t)+D)m$.

Criterion \eqref{eq:crit} implies $(\omega(0),0)\in A(x_0)$. We are left to show
that $(\omega(t),t)$ stays in $A(x_0)$ for all $t\geq 0$. As $A(x_0)$
is uniformly bounded from below in $z$, it implies $\omega$ is lower
bounded in all time.

Finally, we prove that $(\omega(t),t)\in A(x_0)$ for $t\geq 0$ by
contradiction.

Suppose there exist $t>0$ such that $(\omega(t),t)\in\partial A(x_0)$,
and $(\omega(t+\delta),t+\delta)\not\in A(x_0)$. It means that
$\omega(t)=\gamma(t)$ and $\omega'(t)<\gamma'(t)$. On the other hand,
we compute
\[\gamma'(t)=\frac{m}{2}\phi'(L(X(t),t)+D)\frac{d}{dt}L(X(t),t)
\leq \frac{1}{2}|\phi'(L(x,t)+D)|V^\infty(0) m.\]
The last inequality is true as both $\partial(supp(\rho))$ and $X$ are
traveling with the speed between $u_{min}$ and $u_{max}$. It yields
$\frac{d}{dt}L(X(t),t)\leq V^\infty(t)\leq V^\infty(0)$.
Combined with the estimate on $\omega'$, we conclude that
$\omega'(t)\geq\gamma'(t)$ which leads to a contradiction.
\end{proof}

\subsection{Fast alignment property inside the vacuum}
In section \ref{sec:ff} we derived a much larger region of critical
threshold for $(x,t)\in supp(\rho)$, assuming fast
alignment property. In this subsection, we extend the enhanced result
to the vacuum area. We start by showing a fast alignment property where vacuum is 
involved. As the strength of viscosity at point $(\x,t)$ is determined
by $L(\x,t)$, it is natural to introduce the following definitions.

\begin{defn}[Level Sets]
For any level $\lambda\geq0$, define
\begin{align*}
\Omega^\lambda(t)&=\left\{X(t)~\left|~
\begin{cases}
\dot{X}(t)=\u(X,t)\\
X(0)=\x
\end{cases},~L(\x,0)\leq\lambda\right.\right\},\\
S^\lambda(t)&=\sup\left\{|\x-\y|,~~\x\in\Omega^\lambda(t),~
\y\in\Omega^0(t)\right\},\\
 V^\lambda(t)&=\sup\left\{|\u(\x,t)-\u(\y,t)|,~~\x\in\Omega^\lambda(t),~
\y\in\Omega^0(t)\right\}.
\end{align*}
\end{defn}
If $\lambda=0$, $\Omega^0(t)=supp(\rho(t))$, and $S^0(t), V^0(t)$
coincides with $\Vx(t), \Vu(t)$ respectively.
Moreover, $S^\lambda(0)=S_0+\lambda$.
If $\lambda=\infty$,
$V^\infty(t)$ coincide with the definition before.

\begin{thm}[Fast alignment on $\Omega^\lambda$]\label{thm:flockvacuum}
Let $(\rho, \u)$ be a global strong solution of system
\eqref{eq:main}. Suppose the influence function $\phi$ satisfies
$\displaystyle m\int_{\Vx_0}^\infty\phi(r)dr>V^\lambda(0)$.
Then,
there exists a finite number $D^\lambda$ (given by $D^\lambda= \psi^{-1}(V^\lambda(0)+\psi(S_0+\lambda), \ \psi(t):=m\int_0^t\phi(s)ds$), such that
$\displaystyle \sup_{t\geq 0}S^\lambda(t)\leq D^\lambda$ and $\displaystyle V^\lambda(t)\leq
V^\lambda(0) e^{-m\phi(D^\lambda)t}$.
\end{thm}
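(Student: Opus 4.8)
The plan is to mimic the proof of Theorem~\ref{thm:flock} verbatim, replacing the pair $(S(t),V(t))$ with the pair $(S^\lambda(t),V^\lambda(t))$ and checking that the two differential inequalities of Proposition~\ref{prop:SDDI} survive this replacement. First I would establish the analogue of \eqref{eq:dX}: for any characteristic $X(t)$ emanating from $\x$ with $L(\x,0)\le\lambda$ and any $Y(t)$ emanating from $\y\in supp(\rho_0)$, the computation $\frac{d}{dt}|Y-X|^2=2\langle Y-X,\u(Y)-\u(X)\rangle\le 2S^\lambda V^\lambda$ goes through unchanged, and taking the supremum over the relevant pairs gives $\frac{d}{dt}S^\lambda(t)\le V^\lambda(t)$. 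Note also $S^\lambda(0)=S_0+\lambda$, which is what enters $D^\lambda$.

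The second step is the decay inequality $\frac{d}{dt}V^\lambda(t)\le -m\phi(S^\lambda(t))V^\lambda(t)$, and this is where the only genuine care is needed. Running the same $b(\x,\y)$, $\eta_{X,Y}(\z)=\min\{b(X,\z),b(Y,\z)\}$ argument from the proof of Proposition~\ref{prop:SDDI} for the two extremal characteristics $X,Y$ realizing $V^\lambda=|\u(Y)-\u(X)|$, one arrives at $\frac{d}{dt}(V^\lambda)^2\le -2m\big(\int\eta(\z)d\z\big)(V^\lambda)^2$; the maximum principle step still controls $\max_{\x,\y}|\u(\y)-\u(\x)|^2$ by $(V^\lambda)^2$ because $X,Y$ were chosen extremal for $V^\lambda$. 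The point to verify is the lower bound on the decay factor $\int\eta(\z)d\z\ge\phi(S^\lambda)$: since $b(X,\cdot)$ and $b(Y,\cdot)$ are both supported (up to the $\delta_0$ term, which only helps) on $supp(\rho)=\Omega^0$, and both $X$ and $Y$ lie within distance $S^\lambda$ of any point of $\Omega^0$, we get $\min\{b(X,\z),b(Y,\z)\}\ge\frac1m\phi(S^\lambda)\rho(\z,t)$ for $\z\in supp(\rho)$, exactly as before (and the analogous MT estimate with $m=1$). Integrating gives $\int\eta\,d\z\ge\phi(S^\lambda)$, hence \eqref{eq:dV}'s analogue.

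With the pair of inequalities $\frac{d}{dt}S^\lambda\le V^\lambda$, $\frac{d}{dt}V^\lambda\le -m\phi(S^\lambda)V^\lambda$ in hand, the rest is the free-energy argument of Theorem~\ref{thm:flock} applied to $\mathcal{E}^\lambda:=V^\lambda+m\int_0^{S^\lambda}\phi(s)ds$, which is non-increasing, so $V^\lambda(t)-V^\lambda(0)\le -m\int_{S^\lambda(0)}^{S^\lambda(t)}\phi(s)ds$ with $S^\lambda(0)=S_0+\lambda$. The hypothesis $m\int_{S_0}^\infty\phi(r)dr>V^\lambda(0)$ (note $S_0\le S_0+\lambda$ only helps make $\psi(S_0+\lambda)$ larger, so one may equivalently phrase it using $\psi$) guarantees the implicit equation $D^\lambda=\psi^{-1}\big(V^\lambda(0)+\psi(S_0+\lambda)\big)$ has a finite solution, yielding $0\le V^\lambda(t)\le m\int_{S^\lambda(t)}^{D^\lambda}\phi(s)ds$; in particular $S^\lambda(t)\le D^\lambda$ for all $t$, and monotonicity of $\phi$ then upgrades the decay inequality to $\frac{d}{dt}V^\lambda\le -m\phi(D^\lambda)V^\lambda$, giving the exponential bound $V^\lambda(t)\le V^\lambda(0)e^{-m\phi(D^\lambda)t}$.

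The main (and only real) obstacle is bookkeeping in the second step: one must confirm that choosing $X,Y$ extremal for $V^\lambda$ is compatible with the maximum-principle bound on $\max_{\x,\y}|\u(\y)-\u(\x)|$ and, more importantly, that the $\delta_0$-term in $b(\x,\cdot)$ for the CS model does not spoil the lower bound $\int\eta\,d\z\ge\phi(S^\lambda)$ when $X$ or $Y$ lies outside $supp(\rho)$ — but since $b(X,\z)\ge\frac1m a(X,\z)\rho(\z)=\frac1m\phi(|X-\z|)\rho(\z)\ge\frac1m\phi(S^\lambda)\rho(\z)$ pointwise on $supp(\rho)$ regardless of where $X$ sits, this causes no difficulty. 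Everything else is a transcription of the arguments already in Sections~\ref{sec:flock}.
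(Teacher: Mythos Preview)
Your proposal is correct and follows precisely the approach the paper indicates: the paper does not spell out a proof but states in the remark following the theorem that one repeats the arguments of Proposition~\ref{prop:SDDI} and Theorem~\ref{thm:flock} with $X$ a characteristic starting from $\x\in\Omega^\lambda(0)$, which is exactly what you do. One small slip: your parenthetical ``$S_0\le S_0+\lambda$ only helps make $\psi(S_0+\lambda)$ larger'' points in the wrong direction (a larger $\psi(S_0+\lambda)$ makes the needed inequality $\psi(\infty)>V^\lambda(0)+\psi(S_0+\lambda)$ \emph{harder}, not easier); however, under the paper's standing slow-decay assumption \eqref{eq:phi2} one has $\psi(\infty)=\infty$, so $D^\lambda$ is finite regardless and the issue is moot.
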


\begin{rem}
The proof of theorem \ref{thm:flockvacuum} follows the same idea in
proposition \ref{prop:SDDI} and theorem \ref{thm:flock} by
considering $X$ is a characteristic starting from
$\x\in\Omega^\lambda(0)$. We observe that $V^\lambda(t)$ still has an
exponential decay in time, with rate $m\phi(D^\lambda)$. When
$\lambda$ becomes larger, the rate becomes smaller. However, as long
as $\lambda$ is finite, we always have fast alignment.
\end{rem}

\noindent
We  are now ready to derive an 
improvement of lemma \ref{lem:vacuum}  using fast alignment
property.
\begin{proof}[Proof of theorem \ref{thm:vacuum}]
We repeat the proof of lemma \ref{lem:vacuum} using a better bound on
the term $Q$ which reads $|Q|\leq
V^{L(x_0,0)}(t)|\phi'(L(x,t))|m$. Also, we use a better bound on
$\frac{d}{dt}L(X(t),t)\leq V^{L(x_0,0)}(t)$. It yields the following
modified condition
\[V^{L(x_0,0)}(t)\leq\frac{m\phi^2(L(x,t)+D)}{4|\phi'(L(x,t))|+2\phi'(L(x,t)+D)|},\]
for all $x_0$ and $t$, with $(x,t)=(X(t),t)$ being a point on the
characteristics starting from $(x_0,0)$.

When $t=0$, let $\lambda=L(x_0,0)$, we get the condition
\eqref{eq:vacuumphi} stated in the theorem, i.e.
\[V^\lambda(0)\leq\frac{m\phi^2(\lambda+D)}{4|\phi'(\lambda)|+2\phi'(\lambda+D)|}.\]
Finally, we prove that if \eqref{eq:vacuumphi} holds, then the modified condition automatically
holds for all $t>0$. Take $\lambda=L(x,t)$; it suffices to prove that
$V^{L(x_0,0)}(t)\leq V^{\lambda}(0)$.
Applying theorem \ref{thm:flockvacuum}, we are left to prove
$V^{L(x_0,0)}(0)e^{-m\phi(D^{L(x_0,0)})t}\leq V^{L(x,t)}(0)$. This is
true if  $V^\lambda$ grows slower than exponential
rate in $\lambda$ which is the case for the finite $V^\infty$.
\end{proof}

%%%%%%%%%%%%%%%%%%%%%%%%%%%%%%%%%%%%%%%
\section{Critical thresholds for macroscopic Motsch-Tadmor model}\label{sec:MT}
%%%%%%%%%%%%%%%%%%%%%%%%%%%%%%%%%%%%%%%
In this section, we briefly discuss the critical thresholds phenomenon
of the MT system stated in theorem \ref{thm:mainMT}. The evolution of the gradient velocity matrix $M=\grad_\x\u$
reads
\begin{equation}\label{eq:MMT}
M'+M^2=\int_{\RR^n}\grad_\x
\left(\frac{\phi(|\x-\y|)}{\Phi(\x,t)}\right)\otimes(\u(\y,t)-\u(\x,t))\rho(\y,t)d\y -
M.
\end{equation}

The following proposition shows that \eqref{eq:MMT} admits a majorant system of the type \eqref{eq:Mdv}, with
\[
 \gamma= \Gamma=1,~~ \c=\frac{2\Lip{\phi}}{\phi(D)}, G=\phi(D),\]
 and the existence of critical thresholds for MT hydrodynamics follows along the lines of theorems \ref{thm:1DCS}, \ref{thm:lambdadv} in $n=1$ dimension and theorems \ref{thm:2DCS} and \ref{thm:2D} in $n=2$ dimensions. 
\begin{prop}
Suppose $(\rho,\u)$ is a solution of system \eqref{eq:MT}. Then,
for any $\x\in supp(\rho(t))$,
\[
\left|\int_{\RR^n}\partial_{x_j}\left(\frac{\phi(|\x-\y|)}{\Phi(\x,t)}\right)
(u_i(\y,t)-u_i(\x,t))\rho(\y,t)d\y\right|\leq\frac{2\Lip{\phi}}{\phi(D)}\Vu(t),
\quad i,j=1,\cdots,n.
\]
\end{prop}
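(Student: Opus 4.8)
The plan is to differentiate the Motsch--Tadmor kernel $a(\x,\y)=\phi(|\x-\y|)/\Phi(\x,t)$ by the quotient rule and to estimate the two resulting pieces separately, reducing each of them to the uniform lower bound $\Phi(\x,t)\geq\phi(D)m$ valid on the non-vacuum set. This last bound is the MT analogue of the second estimate in Proposition~\ref{prop:fit}, proved by the same argument: since the strong solution flocks (Theorem~\ref{thm:flock}), $\Vx(t)\leq D$, so for $\x,\y\in supp(\rho(t))$ one has $|\x-\y|\leq D$ and hence $\phi(|\x-\y|)\geq\phi(D)>0$ (positivity being guaranteed by the divergent-tail hypothesis~\eqref{eq:phi2}), whence $\Phi(\x,t)=\int_{supp(\rho(t))}\phi(|\x-\y|)\rho(\y,t)\,d\y\geq\phi(D)m$.

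Concretely, I would write
\[
\partial_{x_j}\!\left(\frac{\phi(|\x-\y|)}{\Phi(\x,t)}\right)
=\frac{\partial_{x_j}\phi(|\x-\y|)}{\Phi(\x,t)}
-\frac{\phi(|\x-\y|)}{\Phi(\x,t)^2}\,\partial_{x_j}\Phi(\x,t),
\qquad
\partial_{x_j}\Phi(\x,t)=\int_{\RR^n}\partial_{x_j}\phi(|\x-\z|)\rho(\z,t)\,d\z,
\]
and split the integral in the statement as $I_1+I_2$ according to these two terms. For $I_1$ I would use $|\partial_{x_j}\phi(|\x-\y|)|\leq\Lip{\phi}$, $|u_i(\y,t)-u_i(\x,t)|\leq\Vu(t)$ for $\x,\y\in supp(\rho(t))$, conservation of mass $\int\rho\,d\y=m$, and then divide by $\Phi(\x,t)\geq\phi(D)m$; this gives $|I_1|\leq\Lip{\phi}\,\Vu(t)/\phi(D)$.

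The substance of the argument is in $I_2$. There $\partial_{x_j}\Phi(\x,t)$ is independent of $\y$ and factors out, leaving $\int_{\RR^n}\phi(|\x-\y|)(u_i(\y,t)-u_i(\x,t))\rho(\y,t)\,d\y=\Phi(\x,t)\int_{\RR^n}a(\x,\y)(u_i(\y,t)-u_i(\x,t))\rho(\y,t)\,d\y$, whose modulus is at most $\Phi(\x,t)\,\Vu(t)$ because the MT kernel is normalized, $\int_{\RR^n}a(\x,\y)\rho(\y,t)\,d\y\equiv1$. One power of $\Phi(\x,t)$ then cancels, and together with $|\partial_{x_j}\Phi(\x,t)|\leq\Lip{\phi}m$ and $\Phi(\x,t)\geq\phi(D)m$ this yields $|I_2|\leq\Lip{\phi}\,\Vu(t)/\phi(D)$ as well. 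Adding, $|I_1|+|I_2|\leq 2\Lip{\phi}\,\Vu(t)/\phi(D)$, which is exactly the asserted bound.

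The one point that is not mechanical is the handling of $I_2$: one must extract the factor $\Phi(\x,t)$ using the normalization $\int a\rho\equiv1$ \emph{before} estimating — the crude alternative of bounding $\int\phi(|\x-\y|)(u_i(\y,t)-u_i(\x,t))\rho\,d\y$ by $\Vu(t)m$ and dividing by $\Phi(\x,t)^2\geq(\phi(D)m)^2$ would only produce the weaker constant $\Lip{\phi}/\phi^2(D)$. Everything else is elementary: pointwise control of $\phi$ and $\phi'$, the velocity diameter $\Vu(t)$, conservation of mass, and the flocking estimate $\Vx(t)\leq D$ from Theorem~\ref{thm:flock}, which is also the reason the constant carries a single factor $1/\phi(D)$ relative to the Cucker--Smale case where $\Phi\equiv1$.
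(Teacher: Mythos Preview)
Your proposal is correct and follows essentially the same route as the paper: quotient-rule split, the key cancellation $\int\phi(|\x-\y|)\rho(\y)\,d\y=\Phi(\x,t)$ that removes one power of $\Phi$ from the denominator in the second piece, and the flocking lower bound $\Phi\geq\phi(D)m$. The paper pulls out $\Vu(t)$ first and then performs the cancellation, while you treat $I_1,I_2$ separately and phrase the cancellation via the MT normalization $\int a\rho\equiv1$, but these are the same argument.
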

\begin{proof} We begin with the estimate
\begin{align*}
&\left|\int_{\RR^n}\partial_{x_j}\left(\frac{\phi(|\x-\y|)}{\Phi(\x,t)}\right)
(u_i(\y,t)-u_i(\x,t))\rho(\y,t)d\y\right|\leq
\int_{\RR^n}|u_i(\y,t)-u_i(\x,t)|\left|\partial_{x_j}\left(\frac{\phi(|\x-\y|)}{\Phi(\x,t)}\right)
\rho(\y,t)\right|d\y\\
&\qquad\leq
\Vu(t)\int_{\RR^n}\frac{|\partial_{x_j}\phi(|\x-\y|)\rho(\y,t)\Phi(\x,t)
-\phi(|\x-\y|)\rho(\y,t) \partial_{x_j}\Phi(\x,t)|}{\Phi^2(\x,t)}d\y\\
&\qquad\leq \Vu(t)\left[\frac{\int_{\RR^n}|\partial_{x_j}\phi(|\x-\y|)|\rho(\y)d\y+
|\partial_{x_j}\Phi(\x,t)|}{\Phi(\x,t)}\right]
\leq \frac{2\Vu(t) \int_{\RR^n}|\partial_{x_j}\phi(|\x-\y|)|\rho(\y)d\y}{\Phi(\x,t)}.
\end{align*}
Since by theorem \ref{thm:flock}  $\Vx(t)\leq D$, then 
$\displaystyle \Phi(\x,t) =\int_{\RR^n}\phi(|\x-\y|)\rho(\y,t)d\y\geq\phi(D)\int_{\RR^n}\rho(\y,t)d\y$ 
for all $\x\in supp(\rho(t))$. Hence, 
$ \displaystyle 
\left|\int_{\RR^n}\partial_{x_j}\left(\frac{\phi(|\x-\y|)}{\Phi(\x,t)}\right)
(u_i(\y,t)-u_i(\x,t))\rho(\y,t)d\y\right|\leq
\frac{2\Lip{\phi}}{\phi(D)}\Vu(t)$.
\end{proof}
Using the same argument, we claim that theorem \ref{thm:mainMT} holds
with the following thresholds functions:
\begin{equation}\tag{Motsch-Tadmor: $\ThUL$}
\ThUL(0)=-1,~ \ThUL'(x)=
\begin{cases}
\displaystyle \frac{2\Lip{\phi}}{(1+\phi(D))\phi(D)},&x\to0+\\ \\
\displaystyle \frac{-\phi(D)\ThUL^2(x)-\phi(D)\ThUL(x)\mp 2\Lip{\phi}x}{-\phi^2(D)x}&x>0,\\
\end{cases}
\end{equation}
%\begin{equation}\tag{Motsch-Tadmor: $\ThL$}
%\ThL(0)=-1,~ \ThL'(x)=
%\begin{cases}
%\displaystyle -\frac{2\Lip{\phi}}{(1+\phi(D))\phi(D)},& x\to0+\\
%\displaystyle \frac{-\phi(D)\ThL^2(x)-\phi(D)\ThL(x)+2\Lip{\phi}x}%{-\phi^2(D)x}
%&x>0
%\end{cases}.
%\end{equation}

\noindent
{%\selectfont\color{jobcolor} 
\bf Funding statement}. Research was supported by NSF grants DMS10-08397, RNMS11-07444 (KI-Net) and ONR grant N00014-1210318.

%\vspace*{-0.4cm}
%%%%%%%%%%%%%%%%%%%%%%%%%%%%%%%%%%%%%%%
% Bibliography
%%%%%%%%%%%%%%%%%%%%%%%%%%%%%%%%%%%%%%%

\end{document}